\definecolor{Chocolat}{rgb}{0.36, 0.2, 0.09}
\definecolor{BleuTresFonce}{rgb}{0.215, 0.215, 0.36}
\DeclareMathAlphabet{\mathbbold}{U}{bbold}{m}{n}
\DeclareSymbolFont{rsfscript}{OMS}{rsfs}{m}{n}
\DeclareSymbolFontAlphabet{\mathrsfs}{rsfscript}
\DeclareFontFamily{OMS}{rsfs}{\skewchar\font'177}
\DeclareFontShape{OMS}{rsfs}{m}{n}{%
      <5> rsfs5
      <6> <7> rsfs7
      <8> <9> <10> rsfs10
      <10.95> <12> <14.4> <17.28> <20.74> <24.88> rsfs10
      }{}
\newcommand{\ba}{\bar{\alpha}}
\newcommand{\A}{\mathcal{A}}
\newcommand{\LL}{\mathrm{L}}
\newcommand{\R}{\mathrm{R}}
\newcommand{\oPa}{\overline{\P}^{\ac}}
\newcommand{\cc}{\circledcirc}
\newcommand{\lam}{\lambda}
\newcommand{\pullback}{\mbox{\Large{$\lrcorner$}}}
\newcommand{\BCH}{\mathrm{BCH}}
\newcommand{\MC}{\mathrm{MC}}
\newcommand{\at}{{\alpha}}
\newcommand{\ad}{\mathrm{ad}}
\newcommand{\epi}{\twoheadrightarrow}
\newcommand{\mono}{\rightarrowtail}
\newcommand{\NN}{\mathbb{N}}
\newcommand{\Sy}{\mathbb{S}}
\def\KK{\mathbb{K}}
\newcommand{\ac}{\scriptstyle \text{\rm !`}}
\DeclareMathOperator{\D}{D}
\def\g{\mathfrak{g}}
\def\a{\mathfrak{a}}
\def\TTT{\mathcal{T}}
\def\qi{\xrightarrow{\sim}}
\def\P{\mathcal{P}}
\def\hoP{\mathrm{ho}\, \mathcal{P}}
\def\I{\mathrm{I}}
\DeclareMathOperator{\id}{id}
\DeclareMathOperator{\End}{End}
\DeclareMathOperator{\Hom}{Hom}
\theoremstyle{plain}
\newtheorem {theorem}{Theorem}
\newtheorem*{theo3}{Theorem 3}
\newtheorem*{theo2}{Theorem 2}
\newtheorem*{theo4}{Theorem 4}
\newtheorem {lemma}{Lemma}
\newtheorem {corollary}{Corollary}
\newtheorem {proposition}{Proposition}
\theoremstyle{definition}
\newtheorem {definition}{Definition}
\newtheorem*{example}{\sc Example}
\newtheorem*{remark}{\sc Remark}
\newtheorem*{remarks}{\sc Remarks}
\newtheorem*{notation}{\sc Notation}
\subjclass[2010]{Primary 18G55; Secondary 13D10, 17B60, 18D50}
\keywords{Deformation theory, Lie algebra, pre-Lie algebra, homotopical algebra, operad.}
\thanks{S.S. was supported by the Netherlands Organisation for Scientific Research. B.V. was supported by the ANR SAT grant.}
\begin{document}

\title[Pre-Lie deformation theory]{Pre-Lie deformation theory}

\date{\today}

\author{Vladimir Dotsenko}
\address{School of Mathematics, Trinity College, Dublin 2, Ireland}
\email{vdots@maths.tcd.ie}

\author{Sergey Shadrin}
\address{Korteweg-de Vries Institute for Mathematics, University of Amsterdam, P. O. Box 94248, 1090 GE Amsterdam, The Netherlands}
\email{s.shadrin@uva.nl}

\author{Bruno Vallette}
\address{Laboratoire Analyse, G\'eom\'etrie et Applications, Universit\'e Paris 13, Sorbonne Paris Cit\'e, CNRS, UMR 7539, 93430 Villetaneuse, France.}
\email{vallette@math.univ-paris13.fr}

\begin{abstract}
In this paper, we develop the deformation theory controlled by pre-Lie algebras; the main tool is a new integration theory for pre-Lie algebras.
The main field of application lies in homotopy algebra structures over a Koszul operad; in this case, we provide a  
 homotopical description of the associated Deligne groupoid. 
This permits us to give a conceptual proof, with complete formulae, of 
 the Homotopy Transfer Theorem by means of gauge action. 
We provide a clear explanation of this latter ubiquitous result: there are two gauge elements whose action on the original structure restrict its inputs and respectively its output to the homotopy equivalent space. 
This implies that a homotopy algebra structure transfers uniformly to a trivial structure on its underlying homology if and only if it is gauge trivial; this is  the ultimate generalization of the $dd^c$-lemma.
\end{abstract}

\maketitle

\setcounter{tocdepth}{1}

\tableofcontents

\section*{Introduction}

The philosophy of deformation theory, which goes back to P. Deligne, 
{M. Gerstenhaber, W.M. Goldman, }
A. Grothendieck, 
{J.J. Millson}, 
M. Schlessinger, J. Stasheff, and many others, says that any deformation problem over a field of characteristic zero can be encoded by a differential graded Lie algebra. A precise mathematical statement, together with a proof of it, has recently been given by J. Lurie \cite{Lurie11} with the help of higher category theory. 

\smallskip

In general, algebras of a given type do not have nice homotopy properties. Instead, one should embed them into a bigger category of \emph{homotopy algebras}, where the original relations are relaxed up to an infinite sequence of homotopies. 
Encoding types of algebras with the notion of an operad, one can apply the Koszul duality of operads to produce the required notion of homotopy algebras and a suitable notion of morphisms between them, called \emph{$\infty$-morphisms}. 

\smallskip

Simultaneously  the Koszul duality of operads also produces a differential graded Lie algebra which encodes homotopy algebra structures, thereby answering the philosophy of deformation theory in this case. This means that the solutions to the Maurer--Cartan equation are in one-to-one correspondence with homotopy algebra structures and that the action of the gauge group, coming from the integration of the Lie algebra into a Lie group, gives a suitable equivalence relation between these elements. These two pieces of data form the \emph{Deligne  groupoid}, which is described by the first result of the present paper. 

\begin{theo3}
The Deligne groupoid coming from  the  differential graded Lie algebra associated to a Koszul operad $\P$ and a chain complex $V$ admits, for objects, the set of $\P_\infty$-algebra structures on $V$ and, for morphisms, the $\infty$-isomorphisms whose first component is the identity. 
\end{theo3}

The proof of this result relies on the following general method. We first notice that this differential graded Lie algebra actually carries the refined algebraic structure of a \emph{pre-Lie algebra}. An algebra of this type is made up of  a binary product whose associator is right-symmetric. So, the induced bracket defined by the commutator of the binary product satisfies the Jacobi identity. This algebraic structure was first discovered on  the Hochschild cochain complex of  associative algebras \cite{Gerstenhaber63} and 
on the space of vector fields on a manifold with a flat connection, where it is called a Vinberg algebra after \cite{Vinberg63}.

\smallskip
In this paper, we develop the integration theory of pre-Lie algebras, that is we construct a group whose tangent space at the unit is the original pre-Lie algebra. 
 Some of the ingredients are not new, since the notions of pre-Lie exponential and pre-Lie logarithm, as known as Magnus expansion, were already present in the seminal work of Agrachev--Gamkrelidze \cite{AgrachevGamkrelidze80} on the solutions of differential equations defined by a flow. 
Recall that one can associate to any pre-Lie product an infinite collection of operations of any arity, called the \emph{symmetric braces}, see \cite{GuinOudom08, LadaMarkl05}. 
We prove  that the product of this gauge group, obtained by integration (exponentiation), is equal to the sum of the symmetric braces. 
\begin{theo2}
Under the pre-Lie exponential map, the gauge group is isomorphic to the group made up of group-like elements equipped with the associative product equal to the sum of all symmetric braces. 
\end{theo2}

{The general construction of the gauge group and its action on Maurer--Cartan elements in differential graded Lie algebras are not very manageable since they rely on the intricate Baker--Campbell-Hausdorff formula. 
In the same way as in the above mentioned theorem, we give a simpler formula for the gauge group action for differential graded pre-Lie algebras. This pre-Lie algebra calculus makes the deformation theory modeled by differential graded pre-Lie algebras easier to work with. Thanks to this more computable calculus, we are able to prove the following results on  the moduli spaces of homotopy algebras.
}

\smallskip

The Homotopy Transfer Theorem, see \cite[Chapter~$10$]{LodayVallette12}, extending the Homological Perturbation Lemma, explains how one can transfer homotopy algebra structures through deformation retracts in a homotopy coherent way. 
Using the  pre-Lie deformation theory, we  shed new light on this theorem. 
With the original algebraic structure and the homotopy datum of the deformation retract, we consider two gauge group elements, both solutions to a certain fixed point equation. Then, the core of the Homotopy Transfer Theorem lies in the following result:

\emph{the action of these two gauge group elements on the original structure produces two $\infty$-isomorphic homotopy algebra structures, which are respectively restricted (inputs) and co-restricted (output) to the homotopy equivalent space}.

\smallskip

 We provide all the formulae for the Homotopy Transfer Theorem, but, more interesting, this approach gives a conceptual explanation for the discrepancy between them: the transferred structure and extension of the inclusion, on the one side, and the extension of the projection and the extension of the homotopy, on the other side. 

\smallskip 

Finally, this study allows us to solve the following problem: find a criterion on the level of a homotopy algebra for the uniform vanishing of the transferred structure on its underlying homology. 

\begin{theo4}
A homotopy algebra structure is homotopy trivial if and only if it is gauge trivial. 
\end{theo4}

Notice that we already stated, proved, and used this result in \cite{DotsenkoShadrinVallette15} but only for the simplest possible example, that of mixed complexes. In that case, this was providing us with the ultimate generalization of the $dd^c$-Lemma of Rational Homotopy Theory due to Deligne--Griffiths--Morgan--Sullivan \cite{DGMS75}.

\subsection*{Layout.} We begin by recalling in Section~\ref{subsec:InfAction} how deformation theory works with differential graded Lie algebras. We make it more precise when it is actually coming from a differential graded  associative algebra (Section~\ref{sec:DefAssoc}); this includes the particular case of homotopy modules over a Koszul algebra (Section~\ref{subsec:ExKoszulAlg}). Then, we develop the deformation theory modeled by  differential graded  pre-Lie algebras {in terms of pre-Lie exponentials and symmetric braces} (Section~\ref{sec:DefPreLie}). It 
 includes the case of homotopy algebras over a Koszul operad (Section~\ref{sec:HomoAlgOp}).  In Section~\ref{sec:HoDiagOP}, we describe the homotopy properties of algebras over an operad by means of diagrams of operads. We treat the  homotopy trivialization in Section~\ref{sec:HoTriv} and the Homotopy Transfer Theorem in Section~\ref{subsec:HTT}. 

\subsection*{Acknowledgements.} {We express our appreciation to Fr\'ed\'eric Chapoton, Dominique Manchon, Tornike Kadeishvili, Martin Markl, and Jim Stasheff  for useful discussions and comments. We would like to thank Jim Stasheff and the referee for careful reading.   }

\subsection*{Convention.} Throughout the paper, we work over a ground field $\KK$ of characteristic $0$. We use the theory of operads as described in \cite{LodayVallette12}.

\section{Deformation theory with Lie algebras}\label{subsec:InfAction}

In this introductory section, we recall the deformation theory controlled by a dg Lie algebra: Maurer--Cartan elements and their equivalence relation. We emphasize the role of the \emph{differential trick}, which consists in making the differential internal, and thus helps to provide neater formulae. \\

Let $\g=(g, {d}, [\, , ])$ be a dg Lie algebra, with homological convention, i.e. $|d|=-1$.
We consider the variety of Maurer--Cartan elements 
$$\boxed{\mathrm{MC}(\g):=\left\{ 
\bar\alpha \in g_{-1}\ Ê| \  d\bar\alpha+ {\textstyle \frac{1}{2}}[\bar\alpha, \bar\alpha]=0
\right\} }\ .$$

Any element $\lambda \in g_0$ induces the following  vector field 
$$ d\lambda + [\lambda, -]= d\lambda + \ad_\lambda \in \Gamma( T\  \mathrm{MC}(\g))\ .$$
Two Maurer--Cartan elements $\bar\alpha, \bar\beta$ are  \emph{equivalent} when there exists an element $\lambda\in g_0$ whose  flow relates $\bar\alpha$ to $\bar\beta$ in finite time. Formulae are easily produced by the following trick.
Let's consider the ``augmented'' dg Lie algebra
$$\g \mono \g^+:=(g\oplus \KK \delta, d,  [\, , ])\ , $$
where $|\delta|=-1$, $d(\delta)=0$,  $[\delta, x]= d(x)$, and $[\delta, \delta]=0$. (One could now forget the differential $d$, since it has been made internal $d=[\delta, \textrm{-}]$ in this new dg Lie algebra.) The affine bijection 
$$\bar\alpha \mapsto {\alpha}:=\delta+\bar\alpha\ ,  $$
for $|\bar\alpha|=-1$, and $x\mapsto x$ otherwise, transforms the Maurer--Cartan equation into the square-zero equation 
$$\boxed{\mathrm{Sq(\g^+)}:=\left\{ 
\at=\delta+\bar\alpha \in g_{-1}\oplus \KK\delta\ | \ 
[\at, \at]=0
\right\}}
\ .$$
Notice that $g_0=(g\oplus \KK \delta)_0$ and that the vector fields now become
$$  [\lambda, -]= \ad_\lambda \in \Gamma( T\  \mathrm{Sq}(\g^+))\ .$$
So the solutions to the differential equation 
$${\gamma}'(t)=\ad_\lambda({\gamma}(t)) $$
are 
$${\gamma}(t)=e^{t\ad_\lambda}({\gamma}_0)\ . $$
If ${\gamma}_0={\alpha}$, and under convergence assumptions, then we define
$$\boxed{{\beta}=e^{\ad_\lambda}({\alpha})}\ .$$
Going back to the Lie algebra $\g$, this gives 
$$\delta + \boxed{\bar\beta} =  e^{\ad_\lambda}(\delta+ \bar\alpha)
=e^{\ad_\lambda}(\delta)+e^{\ad_\lambda}(\bar\alpha)=
 \delta+(e^{\ad_\lambda}-\id)(\delta) +e^{\ad_\lambda}(\bar\alpha)=\delta + 
\boxed{\frac{e^{\ad_\lambda}-\id}{\ad_\lambda}(d\lambda)+e^{\ad_\lambda}(\bar\alpha)}\ .
$$

\section{Deformation theory with associative algebras}\label{sec:DefAssoc}
In this section, we simplify the above-mentioned treatment when the Lie bracket is coming from an associative product. This helps us to introduce the gauge group and to understand why the above equivalence relation is actually coming form a group action. \\

Suppose that the dg Lie algebra $\g=(g, d, [\, , ])$ comes from a unital dg associative algebra $\mathfrak{a}=(A, d, \star, 1)$ under the anti-symmetrized bracket $[x,y]:=x\star y - (-1)^{|x||y|} y\star x$.
Let us further assume the existence of an underlying weight grading, different from the homological degree, 
 satisfying 
$$A\cong\prod_{n=0}^\infty A^{(n)}, \quad 1\in A^{(0)}, \quad A^{(n)}\star A^{(m)} \subset A^{(n+m)}, \quad \text{and} \quad g\cong \prod_{n=1}^\infty A^{(n)}  \quad \text{so} \quad A\cong A^{(0)}\times {g}\ .
 $$
(The following deformation theory works under more general conditions, but the ones given here correspond precisely to  the various examples treated in the present paper.)

Under this assumption, the exponential of elements $\lambda$ 
with trivial weight $0$ component 
is well defined 
$$e^\lambda:=1 +\lambda + \frac{\lambda^{\star 2}}{2!} + \frac{\lambda^{\star 3}}{3!} +\cdots \ . $$
And so the images of the  degree $0$ elements under the exponential map  form a group 
$$\mathfrak{G}:=\left(
\{ e^\lambda, \lambda \in g_0\}=\{1\}\times g_0, 
\star, 1
\right)\ ,  $$
made up of the \emph{group-like elements}, that is the degree $0$ elements whose weight $0$ term is equal to $1$.

The (infinitesimal) adjoint action then reads 
$$\ad_\lambda(\textrm{-})=
\underbrace{(\lambda \star \textrm{-})}_{l_\lambda}-
\underbrace{(\textrm{-} \star \lambda)}_{r_\lambda}
\ .$$ 
Since it comes from   two commuting operators in $\End(g)$, the (full) action  is equal to the conjugation action of the exponential group 
$$e^{\ad_\lambda} (\at)= e^{l_{\lambda}}\circ e^{r_{-\lambda}}(\at)=e^{\lambda}\star \at \star e^{-\lambda}  \ .$$
Notice that, pulling back the product in the exponential group with the logarithm map gives the following group product on  $g_0$: 
$$\boxed{x\cdot y = \ln \left( 
e^x\star e^y 
\right) = x+y+\frac{1}{2}[x,y] +\cdots = \BCH(x,y)}\ ,$$
produced by the Baker--Campbell--Hausdorff formula. 

In general, the dg Lie algebra does not come from a dg associative algebra and the exponential group does not exist. Still, one can consider the group structure, defined by the BCH formula on $g_0$ : 
$$\boxed{\Gamma:=\left(
g_0, x\cdot y := \BCH(x,y), 0
\right)}\ ,$$
under some nilpotence or convergence condition, like the one considered above. This group is called the \emph{gauge group}.

\begin{proposition}[\cite{GoldmanMillson88}]
When the  infinitesimal action of the Lie algebra $g_0$ defined in Section~\ref{subsec:InfAction} 
{converges at} 
 $t=1$, it is  given by the  action of the gauge group $\Gamma$ on the variety of Maurer--Cartan elements under the formula: 
$$ \lambda.\at:=e^{\ad_\lambda}(\at)\ .$$
\end{proposition}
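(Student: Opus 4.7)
The plan is to verify three things implicit in the statement: (a) that the time-$1$ flow $e^{\ad_\lambda}(\at)$ is well-defined under the convergence hypothesis and stays inside $\Sq(\g^+)$; (b) that $\lambda \mapsto (\at \mapsto e^{\ad_\lambda}(\at))$ defines an action of the gauge group $\Gamma$; and (c) that its infinitesimal version is the vector field of Section~\ref{subsec:InfAction}. For (a), the operator $\ad_\lambda : \g^+ \to \g^+$ is a derivation of the graded Lie bracket by the Jacobi identity, so $e^{\ad_\lambda}$, whenever it converges, is a Lie algebra automorphism. Consequently $[\at,\at]=0$ implies $[e^{\ad_\lambda}(\at), e^{\ad_\lambda}(\at)] = e^{\ad_\lambda}[\at,\at] = 0$, so the $\Sq$-variety is preserved. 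Moreover, $e^{t\ad_\lambda}(\at)$ solves the ODE $\gamma'(t)=\ad_\lambda(\gamma(t))$ with $\gamma(0)=\at$, by the same exponential computation done in Section~\ref{subsec:InfAction}; evaluating at $t=1$ produces exactly $\lambda.\at$.

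For (b), the essential identity is $e^{\ad_\lambda} \circ e^{\ad_\mu} = e^{\ad_{\BCH(\lambda, \mu)}}$, which is the compatibility with the BCH product defining $\Gamma$. The cleanest route is to embed $\g^+$ into its completed universal enveloping algebra $\widehat{U(\g^+)}$: under the convergence hypothesis, $e^\lambda$ is a well-defined group-like element there, and the associative BCH identity of Section~\ref{sec:DefAssoc} gives $e^\lambda \star e^\mu = e^{\BCH(\lambda, \mu)}$. Since the adjoint action of $\g^+$ extends to the conjugation action of $\widehat{U(\g^+)}$ on itself, and since the two commuting operators $l_\lambda, r_\lambda$ on $\widehat{U(\g^+)}$ integrate to give $e^{\ad_\lambda}(-) = e^\lambda \star (-) \star e^{-\lambda}$ exactly as in the associative case, composition of two such inner conjugations reads
$$e^{\ad_\lambda}\circ e^{\ad_\mu}(-) = e^\lambda \star e^\mu \star (-) \star e^{-\mu}\star e^{-\lambda} = e^{\BCH(\lambda,\mu)}\star(-)\star e^{-\BCH(\lambda,\mu)} = e^{\ad_{\BCH(\lambda,\mu)}}(-),$$
which is precisely the group-action axiom $\lambda.(\mu.\at) = \BCH(\lambda,\mu).\at$. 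Restricted to $\g^+ \subset \widehat{U(\g^+)}$ this yields the desired action.

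Finally, for (c), differentiating $e^{t\ad_\lambda}(\at)$ at $t=0$ gives $\ad_\lambda(\at)=[\lambda,\at]$, which under the differential trick $\at = \delta + \bar\alpha$ unpacks to $d\lambda + [\lambda, \bar\alpha]$, matching the infinitesimal vector field of Section~\ref{subsec:InfAction}. The main obstacle is part (b): one must justify the passage to $\widehat{U(\g^+)}$, i.e.\ that the convergence hypothesis on the action at $t=1$ is strong enough to guarantee both convergence of $e^\lambda$ in the completion and convergence of the BCH series in $\g_0$. In the cases of interest in this paper, this follows because the weight grading on $\g_0$ provides a complete descending filtration with respect to which $\g_0$ is pronilpotent, and all the relevant series terminate on each graded piece.
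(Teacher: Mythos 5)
Your proof is correct and follows essentially the same route as the paper: the heart of the matter in both cases is the group-action axiom $e^{\ad_\lambda}\circ e^{\ad_\mu}=e^{\ad_{\BCH(\lambda,\mu)}}$, which the paper simply invokes and which you justify by realizing $e^{\ad_\lambda}$ as conjugation by the group-like element $e^\lambda$ in the completed enveloping algebra. The additional verifications you supply (preservation of the square-zero variety via the derivation property of $\ad_\lambda$, and the match with the infinitesimal vector field) are correct and only make explicit what the paper leaves implicit.
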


\begin{proof}
The action of $0$ is trivial 
$$0.\at =  e^{\ad_0}(\at)=\id(\at)=\at\ ,$$
and the action respects the group structure 
$$ 
\lambda.(\mu.\at)= e^{\ad_\lambda}(e^{\ad_\mu}(\at))= (e^{\ad_\lambda} \circ e^{\ad_\mu})(\at)=
e^{\ad_{\BCH(\lambda, \mu)}}(\at)=(\lambda\cdot \mu).\at\quad  .
$$
\end{proof}

In this case, one considers the \emph{Deligne groupoid} associated to the dg Lie algebra $\g$: 
$$\boxed{\mathsf{Deligne}(\g):=\left(\mathrm{MC}(\g), \Gamma
\right)} \ ,$$
whose objets are Maurer--Cartan elements of $\g$ and whose morphisms are given by the action of the gauge group $\Gamma$. 

\section{Example : homotopy modules over a Koszul algebra}\label{subsec:ExKoszulAlg}
In this section, we apply the general deformation theory controlled by a dg associative algebra to the example of homotopy modules over a Koszul algebra. \\

Let $\A$ be a (homogeneous) Koszul algebra with Koszul dual coalgebra $\A^{\ac}$, see \cite[Chapter~$3$]{LodayVallette12}. {We denote its coaugmentation ideal by $\overline{\A}^{\ac}$.} 
Let $(V, d)$ be a chain complex. The set of (left) module structures over the Koszul resolution $\A_\infty:=\Omega \A^{\ac}$ on $V$ is given by the Maurer--Cartan elements 
$$\partial(\alpha)+\alpha \star \alpha=0 $$
of the convolution dg associative algebra 
$$\boxed{\g_{\A, V}:=\big(\Hom(\overline{\A}^{\ac}, \End(V)) , (\partial_V)_*, \star
\big)}\ .$$
One can embed it inside the dg associative algebra $\g_{\A,V}^+$, defined in the same way as before for dg Lie algebras,  but this last one still misses a unit. Instead, we consider the more natural dg unital associative algebra made up of \emph{all the maps} from the Koszul dual coalgebra to the endomorphism algebra 
$$ \g_{\A,V} \mono \g_{\A,V}^+ \mono \boxed{\a_{\A,V}:= \big(
\Hom({\A}^{\ac}, \End(V)),  (\partial_V)_*, \star, {1}
\big)}
\ ,$$
where the unit is given by 
$${1} \ : \ \I \mapsto \id_V $$
and where the last embedding is given by 
$\delta \mapsto  ( \I \mapsto \partial_V)$ .
This associative algebra is graded by the weight grading of the Koszul dual coalgebra:
$$
\a_{\A,V}=\prod_{n\in \NN}
\Hom\big({{\A}^{\ac}}^{(n)}, \End(V)\big) \ .$$ 
Since the Koszul dual coalgebra is \emph{connected}, i.e. 
${{\A}^{\ac}}^{(0)}=\I$ and ${{\A}^{\ac}}^{(n)}=0$ for $n<0$, the convolution unital associative algebra 
$\a_{\A,V}$ decomposes as follows 
$$\Hom\big({\A}^{\ac}, \End(V)\big) \cong 
\End(V)\times\prod_{n=1}^\infty 
\Hom\big({{\A}^{\ac}}^{(n)}, \End(V)\big)  \ . $$ 
So its elements are  series 
$f=f_{(0)}+f_{(1)}+f_{(2)}+\cdots$, where $f_{(n)} : {{\A}^{\ac}}^{(n)} \to \End(V)$ . One can view the gauge group $\Gamma$ in term of group-like elements under the exponential map. 

\begin{proposition}\label{prop:gaugegroupI}
The exponential map 
$$\xymatrix{\Hom(\overline{\A}^{\ac}, \End(V))_0 \ar@<0.5ex>[r]^(0.26)e & \ar@<0.5ex>[l]^(0.74){\ln}
\{f : \A^{\ac} \to \End(V), |f|=0 \ \text{and}\  f(\I)=\id_V \} \subset \Hom({\A}^{\ac}, \End(V))_0} $$
is injective with image the degree $0$ maps such that $f(\I)=\id_V$. Its inverse is provided by the logarithm map 
$$\ln(1+\lambda):=\lambda - \frac{\lambda^{\star 2}}{2} + \frac{\lambda^{\star 3}}{3}+ \cdots \ .$$
\end{proposition}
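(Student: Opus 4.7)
The plan is to verify that both series are well-defined thanks to the weight grading, land in the correct target, and are mutual inverses by formal power series identities.

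First I would unpack the decomposition $\a_{\A,V} \cong \End(V) \times \prod_{n\geq 1}\Hom({\A^{\ac}}^{(n)},\End(V))$ to identify $\Hom(\overline{\A}^{\ac},\End(V))_0$ with the subspace of $\Hom(\A^{\ac},\End(V))_0$ consisting of maps that vanish on $\I$. Similarly, a degree zero map $f:\A^{\ac}\to\End(V)$ with $f(\I)=\id_V$ can be uniquely written as $f=1+\lambda$ with $\lambda$ vanishing on $\I$. This sets up the would-be bijection on the level of sets, and reduces the statement to checking that $\exp$ and $\ln$ make sense and are inverse to each other.

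Next I would verify convergence using the weight grading. Since the coproduct of $\A^{\ac}$ respects the weight and $\I$ spans the weight zero part, for $\lambda$ vanishing on $\I$ the convolution power $\lambda^{\star n}$ vanishes on $\bigoplus_{k<n}{\A^{\ac}}^{(k)}$ (each factor in the iterated coproduct must have positive weight). Therefore, restricted to any fixed weight $k$, both series
$$e^\lambda = \sum_{n=0}^{\infty}\frac{\lambda^{\star n}}{n!}\quad\text{and}\quad \ln(1+\lambda) = \sum_{n=1}^{\infty}(-1)^{n+1}\frac{\lambda^{\star n}}{n}$$
reduce to a finite sum with at most $k+1$ terms, hence are well-defined elements of $\a_{\A,V}$. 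An easy check then shows $e^\lambda(\I)=\id_V$ (only the $n=0$ summand contributes) and $\ln(1+\lambda)(\I)=0$, and that both maps preserve homological degree, so $\exp$ and $\ln$ have the correct targets.

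Finally, for the inversion, I would invoke the classical formal power series identities $\ln\circ\exp(X)=X$ and $\exp\circ\ln(1+X)=1+X$, valid in any unital associative algebra in which the relevant series converge. Our convolution algebra is associative, and by the weight-filtration argument above, on each weight component all the intermediate series collapse to finite sums, so the formal identities apply verbatim to give $\ln(e^\lambda)=\lambda$ and $e^{\ln(1+\lambda)}=1+\lambda$. The main (minor) obstacle is being careful with the weight bookkeeping that justifies applying the purely formal identities in the completed algebra; once this is in place the statement follows immediately.
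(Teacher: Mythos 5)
Your proposal is correct and follows essentially the same route as the paper: both arguments rest on the observation that, since $\lambda$ vanishes on the weight-zero part $\I$, the convolution power $\lambda^{\star n}$ kills all weights below $n$, so both series are locally finite, and then both invoke the classical formal identities $\ln(e^\lambda)=\lambda$ and $e^{\ln(1+\lambda)}=1+\lambda$ in the (genuinely associative) convolution algebra. Your version merely spells out the weight bookkeeping and the identification $f=1+\lambda$ a bit more explicitly than the paper does.
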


\begin{proof}
For any $\lambda \in \Hom(\bar{\A}^{\ac}, \End(V))$, its exponential 
$$e^\lambda=1 +\lambda + \frac{\lambda^{\star 2}}{2!} + \frac{\lambda^{\star 3}}{3!} +\cdots  $$
makes sense and is well defined in $\Hom({\A}^{\ac}, \End(V))$ since the image of any element $x\in 
({\A}^{\ac})^{(n)}$ is given by the finite sum 
$$  \lambda(x) + \cdots + \frac{\lambda^{\star n}(x)}{n!}Ê\ . $$
The classical computations hold here, so one has $\ln(e^\lambda)=\lambda$ and $e^{\ln(1+\lambda)}=1+\lambda$ for any $\lambda \in \Hom(\overline{\A}^{\ac},\allowbreak \End(V))$, which concludes the proof. 
\end{proof}

In this case, the exponential group $\mathrm{G}$ exists in the bigger algebra $\a_{\A,V}$, and is isomorphic to the gauge group $\Gamma$. By the weight grading property, its action on Maurer--Cartan elements is well defined and given by the conjugation 
$$\boxed{e^{\ad_\lambda} (\at)=e^{\lambda}\star \at \star e^{-\lambda}}  \ .$$

Since the Maurer--Cartan elements correspond to homotopy $\A$-module structures, the question is now how to give a homotopical interpretation to the gauge group and its action. Let $\alpha$ and $\beta$ be two homotopy $\A$-module structures on $V$, i.e. two Maurer--Cartan elements. One defines a more general notion of maps between two homotopy $\A$-modules, called \emph{$\infty$-morphisms} and denoted $\alpha \rightsquigarrow \beta$, by  degree $0$ maps 
$f : \A^{\ac} \to \End(V)$ satisfying the equation
$${f\star \at = {\beta} \star f}\ . $$ 
These can be composed by the formula $g \star f$ and $\infty$-isomorphisms are the $\infty$-morphisms such that the first component is invertible, i.e. $f_{(0)}(\I)\in \mathrm{GL}(V)$. When this first component is the identity map, we call them \emph{$\infty$-isotopies} and we denote their set by $\infty\mathsf{-iso}$. 

\begin{theorem}\label{thm:DeligneGroupoid}
For any Koszul algebra $\A$ and for any chain complex $(V, d)$, the group of $\infty$-isotopies is isomorphic to the gauge group
$$ \boxed{\Gamma=(g_0, \BCH, 0)\cong (\infty\mathsf{-iso}, \star, \id_V)}  $$ and 
the Deligne groupoid  is isomorphic to the groupoid whose objects are homotopy $\A$-modules and whose 
morphisms are  $\infty$-isotopies
$$\boxed{\mathsf{Deligne}(\g_{A,V})=\left(\mathrm{MC}(\g_{A,V}), \Gamma
\right)=
(\A_\infty\mathsf{-Mod}, \infty\mathsf{-iso})}
\ .$$
\end{theorem}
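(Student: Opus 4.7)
The plan is to leverage Proposition~\ref{prop:gaugegroupI} together with the gauge action formula $e^{\ad_\lambda}(\at) = e^\lambda \star \at \star e^{-\lambda}$ that was just established in the section. First, I would observe that Proposition~\ref{prop:gaugegroupI} already provides a set-theoretic bijection $\lambda \mapsto e^\lambda$ between $g_0$ and the collection of degree zero maps $\A^{\ac} \to \End(V)$ sending $\I$ to $\id_V$, with inverse $\ln$. This latter collection is, by definition, the set of underlying data of $\infty$-isotopies before the intertwining condition is imposed, so the content of the theorem is to match the intertwining condition with the gauge relation.

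Second, I would translate gauge equivalences into intertwining equations. Starting from a Maurer--Cartan element $\at$ and setting $\beta := e^{\ad_\lambda}(\at) = e^\lambda \star \at \star e^{-\lambda}$, multiplication on the right by $e^\lambda$ yields the identity $e^\lambda \star \at = \beta \star e^\lambda$, which is precisely the defining equation for $e^\lambda$ to be an $\infty$-isotopy from $\alpha$ to $\lambda.\alpha$. Conversely, any $\infty$-isotopy $f$ from $\alpha$ to $\beta$ is of the form $f = e^\lambda$ for $\lambda := \ln(f) \in g_0$, and the equation $f \star \at = \beta \star f$ rearranges to $\beta = f \star \at \star f^{-1} = e^{\ad_\lambda}(\at) = \lambda.\alpha$. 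Hence the exponential restricts to a bijection between gauge morphisms $\alpha \to \lambda.\alpha$ and $\infty$-isotopies $\alpha \rightsquigarrow \beta$, with matching source and target.

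Third, I would check compatibility with composition. In the associative algebra $\a_{\A,V}$ the Baker--Campbell--Hausdorff identity $e^\lambda \star e^\mu = e^{\BCH(\lambda,\mu)}$ holds, so the exponential bijection of the first step upgrades to a group isomorphism $(\Gamma,\BCH,0) \cong (\infty\mathsf{-iso}, \star, \id_V)$. Combining this with the source-target compatibility of the second step promotes the isomorphism to the claimed isomorphism of groupoids $\mathsf{Deligne}(\g_{\A,V}) \cong (\A_\infty\mathsf{-Mod}, \infty\mathsf{-iso})$. The main obstacle is purely bookkeeping: one must carefully verify that the differential trick is consistent, so that the square-zero equation $[\at,\at]=0$ and the intertwining equation $f \star \at = \beta \star f$ written in the unital augmented algebra $\a_{\A,V}$ (and involving the internal derivation $\delta$) correspond, after restriction along $\overline{\A}^{\ac} \hookrightarrow \A^{\ac}$, to the usual Maurer--Cartan and $\infty$-morphism equations in $\g_{\A,V}$ involving the differentials of $V$ and $\A^{\ac}$; this uses the relation $[\delta,f] = (\partial_V)_*(f)$ and is a short term-by-term check.
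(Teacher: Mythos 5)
Your proposal is correct and follows essentially the same route as the paper: the exponential/logarithm bijection of Proposition~\ref{prop:gaugegroupI} identifies $g_0$ with the $\infty$-isotopies, and the gauge relation $\beta = e^{\lambda}\star \at \star e^{-\lambda}$ is rearranged to the intertwining equation $e^{\lambda}\star\at = \beta\star e^{\lambda}$ exactly as in the paper's proof. Your additional checks (compatibility of $\star$ with $\BCH$ under the exponential, and the consistency of the differential trick) are correct elaborations of steps the paper leaves implicit.
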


\begin{proof} 
The first assertion follows directly from  Proposition~\ref{prop:gaugegroupI}.
Two Maurer--Cartan elements $\at$ and ${\beta}$ are gauge equivalent if and only if there is an $\infty$-isotopy between the two homotopy $\A$-module structures on $V$. Indeed, there exists $\lambda \in 
\Hom(\overline{\A}^{\ac}, \End(V))_0$ such that 
${\beta}=e^\lambda.\at=e^{\lambda}\star \at \star e^{-\lambda}$ if and only if 
$e^{\lambda}\star \at= {\beta} \star e^{\lambda}$. 
\end{proof}

\paragraph{\sc Example} The example of the Koszul algebra  $\D=T(\Delta)/(\Delta^2)$ of dual numbers was treated in \cite{DotsenkoShadrinVallette15}. The   gauge group action was used there to  define new invariants on the de Rham cohomology of Poisson manifolds. 

\section{Deformation theory with pre-Lie algebras}\label{sec:DefPreLie}
The notion of pre-Lie algebra sits between the notion of an associative algebra and the notion of a Lie algebra: any associative algebra is an example of a pre-Lie algebra and any pre-Lie product induces a Lie bracket. In this section, we first develop the integration theory of pre-Lie algebras, that is how to get a group via exponentials from a pre-Lie algebra. Then, we apply it to make explicit the deformation theory controlled by pre-Lie algebras, that is we describe the gauge equivalence via the action of this latter group.    \\

Let us now study the more general  situation where the 
dg Lie algebra $\g=(g, d, [\, , ])$ is actually coming from a dg left unital  pre-Lie algebra $\mathfrak{a}=(A, d, \star, 1)$:  
under the usual anti-symmetrization: $[x,y]:=x\star y - (-1)^{|x||y|} y\star x$. 
Recall, for instance from \cite[Section~$13.4$]{LodayVallette12}, that a \emph{pre-Lie algebra} is defined by a binary product whose associator is right symmetric: 
$$(x\star y)\star z - x\star (y\star z)=(-1)^{|y||z|}\big( (x\star z)\star y - x\star (z\star y)\big) \ . $$ 
Here the unit is only supposed to hold on the left-hand side: $1 \star x= x$. 

We will again assume the existence of similar underlying weight grading  satisfying 
\begin{eqnarray}\label{eqn:weight} 
\quad\quad
A\cong\prod_{n=0}^\infty A^{(n)}, \ 1\in A^{(0)}, \ A^{(n)}\star A^{(m)} \subset A^{(n+m)}, \quad \text{and} 
 \quad g\cong \prod_{n=1}^\infty A^{(n)}  \quad \text{so} 
\quad A\cong A^{(0)}\times {g}\ . 
\end{eqnarray}
Under this assumption, one can realize  the gauge group $\Gamma$ as exponentials  as follows. For pre-Lie algebras, one chooses to iterate the product on the right-hand side: 
$$\boxed{\lambda^{\star n}:=\underbrace{(\cdots((\lambda \star \lambda) \star \lambda)\cdots )\star \lambda}_{n\  \text{times}}}\ . $$
The \emph{pre-Lie exponential} of elements $\lambda$ of $\a$ with trivial weight $0$ component is then 
defined as a series as follows: 
$$e^\lambda:=1 +\lambda + \frac{\lambda^{\star 2}}{2!} + \frac{\lambda^{\star 3}}{3!} +\cdots \ . $$
(Notice that the space of vector fields of a manifolds with a flat and torsion free connection admits a pre-Lie product such that the solutions to the flow differential equation are given by such  series, see \cite{Vinberg63, AgrachevGamkrelidze80} for more details.) 

We call \emph{group-like element} any degree $0$ element of $\a$ whose weight $0$ component is equal to $1$. We denote the associated set by $\mathrm{G}$. 

\begin{lemma}\label{lemma:ExponentialII}
The exponential map 
$$e \ : \ g_0 \xrightarrow{\cong}
\mathrm{G}=\{ 1 \}\times g_0 = \{ 1 \}\times \prod_{n \ge 1 } A^{(n)}_0\subset A$$
is injective with image the set of group-like elements. 
\end{lemma}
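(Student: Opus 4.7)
The plan is to use the weight grading assumption~\eqref{eqn:weight} to turn the statement into an inductive exercise on weights. First, I would verify that $e^\lambda$ is well defined for any $\lambda \in g_0$: since $\lambda$ has trivial weight-$0$ part, each factor in $\lambda^{\star k}$ sits in $\prod_{n\geq 1} A^{(n)}$, and the compatibility $A^{(n)}\star A^{(m)}\subset A^{(n+m)}$ forces $\lambda^{\star k}$ to live in $\prod_{n\geq k} A^{(n)}$. Hence the weight-$n$ component of $e^\lambda$ is the finite sum $\sum_{k=1}^{n}\frac{1}{k!}(\lambda^{\star k})^{(n)}$, so the series converges in the product topology, has weight-$0$ part equal to $1$, and lies in $\{1\}\times g_0 \subset A$.

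The key observation is then that in $(\lambda^{\star k})^{(n)}$ only summands of the form $\lambda^{(n_1)}\star(\cdots)$ with $n_1+\cdots+n_k=n$ and each $n_i\geq 1$ survive. For $k=n$ this forces every $n_i=1$, while for $k<n$ at least one $n_i\geq 2$. Grouping terms, one obtains an identity
\[
(e^\lambda)^{(n)} \;=\; \lambda^{(n)} + \Phi_n\bigl(\lambda^{(1)},\dots,\lambda^{(n-1)}\bigr),
\]
where $\Phi_n$ is a universal polynomial (built from right-iterated pre-Lie products and rational coefficients) depending only on the strictly lower weight components of $\lambda$. This triangular structure makes the rest formal.

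From this identity, injectivity follows at once: if $e^\lambda=e^\mu$, then comparing weight-$1$ components gives $\lambda^{(1)}=\mu^{(1)}$, and the same comparison in each weight yields $\lambda^{(n)}=\mu^{(n)}$ by induction. Surjectivity onto group-like elements is equally straightforward: given $g=1+\sum_{n\geq 1} g^{(n)} \in \{1\}\times g_0$, define $\lambda\in g_0$ component-wise by the recursion
\[
\lambda^{(n)} \;:=\; g^{(n)} - \Phi_n\bigl(\lambda^{(1)},\dots,\lambda^{(n-1)}\bigr),
\]
which produces a well-defined element of $\prod_{n\geq 1}A^{(n)}_0 = g_0$ whose exponential is $g$ by construction.

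The only point that really needs care — and the one I view as the main obstacle — is the bookkeeping behind the triangular formula: one must check that the right-iterated pre-Lie products $\lambda^{\star k}$, which are genuinely non-associative, still split cleanly into a weight-$n$ contribution equal to $\lambda^{(n)}$ (coming uniquely from $k=1$) plus contributions built out of $\lambda^{(<n)}$. Once this is isolated, the degree and weight constraints do all the work, and no pre-Lie logarithm formula (e.g.\ the Magnus expansion) is needed for the statement, since existence and uniqueness of $\lambda$ are obtained purely from the inductive construction above.
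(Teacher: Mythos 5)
Your proposal is correct and follows essentially the same route as the paper: the paper's proof likewise expands $e^\lambda$ weight by weight, observes that the weight-$n$ component equals $\lambda_{(n)}$ plus a universal expression in the strictly lower-weight components, and solves the resulting triangular system recursively (writing out the first three weights explicitly). No comparison is needed beyond noting that your $\Phi_n$ is exactly the paper's implicit recursion, and your remark that the Magnus expansion is not needed for the statement itself matches the paper, which only mentions it afterwards as the closed form of the inverse.
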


\begin{proof}
Any element $\lambda \in g_0$ viewed in $\a$ decomposes according to the weight grading as follows 
$$\lam = \lam_{(1)} + \cdots + \lam_{(n)}+\cdots\ , $$
where $\lam_{(n)}\in A^{(n)}$. So the exponential of $\lam$ is equal to 
\begin{eqnarray*}
e^\lam & =& 1 + \lam + {\textstyle \frac12} \lam\star \lam + {\textstyle \frac16} (\lam \star \lam)\star \lam+ \cdots \\
&=& 1 + \underbrace{\lam_{(1)}}_{(1)}+\underbrace{\lam_{(2)} + {\textstyle \frac12} \lam_{(1)}\star \lam_{(1)}}_{(2)}  \\&& 
+\underbrace{\lam_{(3)} + {\textstyle \frac12} \lam_{(1)}\star \lam_{(2)} +  {\textstyle \frac12} \lam_{(2)}\star \lam_{(1)} +{\textstyle \frac16} (\lam_{(1)}\star \lam_{(1)})\star \lam_{(1)} 
}_{(3)} + \cdots
\end{eqnarray*}
From this decomposition, it is easy to see that the equation $e^\lam=1+a$, for any $a\in  g_0=\prod_{n \ge 1 } A^{(n)}_0$ admits one and only one solution $\lam\in g_0$ given by: 
\begin{eqnarray*}
\lam_{(1)}&=&a_{(1)}\\
\lam_{(2)}&=&a_{(2)}-{\textstyle \frac12} a_{(1)}\star a_{(1)}\\
\lam_{(3)}&=&a_{(3)}-{\textstyle \frac12} a_{(1)}\star a_{(2)}-{\textstyle \frac12} a_{(2)}\star a_{(1)}
+{\textstyle \frac14} (a_{(1)}\star a_{(1)})\star a_{(1)}
+{\textstyle \frac{1}{12}} a_{(1)}\star (a_{(1)}\star a_{(1)})\\
\vdots &=& \vdots 
\quad .
\end{eqnarray*}
\end{proof}
Lemma~\ref{lemma:ExponentialII} shows that the pre-Lie exponential map admits an inverse,  which plays the role of a pre-Lie logarithm. It is actually given by a series $\Omega(a)$ called the ``pre-Lie Magnus expansion'' series, which begins by 
$$\ln(1+a)=\Omega(a):=a - \frac12 a\star a + \frac14 a\star (a \star a)
+ \frac{1}{12}(a\star a)\star a+\cdots \ , $$
 see \cite{AgrachevGamkrelidze80, Manchon11} for more details. \\

Recall that, in any pre-Lie algebra, one defines  \emph{symmetric braces} operations by the following formulae:
$$\begin{array}{lcl}
\{ a; \}&:=&a \\
\{ a; b_1\}&:=&a\star b_1 \\
\{ a; b_1,b_2\}&:=& \{\{a; b_1\};  b_2\} - \{a; \{ b_1;  b_2\}\}= (a\star b_1)\star b_2 - a\star (b_1 \star b_2)\\
&&\\
\{ a; b_1,\ldots, b_n\}&:=&  \{ \{a; b_1,\ldots, b_{n-1}\}; b_n\}- \displaystyle \sum_{i=1}^{n-1} \{a; b_1, \ldots, b_{i-1}, 
\{b_i; b_n\}, b_{i+1}, \ldots, b_{n-1}\}   \ .\\
\end{array}
$$
One can see that these operations are {(graded)} symmetric with respect to the $n$ right-hand side inputs. They satisfy some relations, see \cite[Section~$13.11.4$]{LodayVallette12}. The category of pre-Lie algebras is isomorphic to the category of symmetric braces algebras \cite{GuinOudom08}. 

In any pre-Lie algebra like $\a$, we define the \emph{circle product} as the sum of the symmetric braces 
$$\boxed{a \circledcirc (1+b) := \sum_{n\ge 0}  {\displaystyle \frac{1}{n!}}  \{a; \underbrace{b, \ldots, b}_{n}\}}\ ,$$
for any $a, b\in A$ such that $b$ has trivial weight zero component. One can see that the circle product is linear only on the left-hand side, i.e. with respect to $a$, and that $1$ is a unit for it: $a\circledcirc 1 =a$ and $1 \circledcirc (1+b)=1+b$. 

\begin{proposition}\label{prop:MAINrelation}
Any pre-Lie algebra $\a$, with the weight grading condition (\ref{eqn:weight}), satisfies:
$$\boxed{e^{r_\lambda}(a)=a\circledcirc e^\lambda}\ ,   $$ 
for any $a, \lambda \in A$, with $\lambda$ having trivial weight zero component and where the operator $r_\lambda(a):=a \star \lambda$ presents the right product by $\lambda$. 
\end{proposition}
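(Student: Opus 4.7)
The plan is to introduce a formal parameter $t$ and show that both sides extend to one-parameter families solving the same ODE. Set
$$f(t) := e^{t r_\lambda}(a) = \sum_{n \geq 0} \frac{t^n}{n!}\, r_\lambda^n(a), \qquad g(t) := a \cc e^{t\lambda} = \sum_{n \geq 0}\frac{1}{n!}\bigl\{a; \mu(t), \ldots, \mu(t)\bigr\},$$
with $\mu(t) := e^{t\lambda} - 1$. Both series are well defined in the weight-graded completion of $A$, since $\lambda$ has trivial weight-$0$ component, so $\mu(t) \in \prod_{n \geq 1} A^{(n)}$. The claim at $t=1$ will follow from uniqueness of solutions to the initial value problem
$$\varphi(0) = a, \qquad \varphi'(t) = \varphi(t) \star \lambda = r_\lambda(\varphi(t)),$$
which is a routine induction on weight in our graded setting. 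For $f$, the ODE is immediate from the definition of the exponential.

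The core of the argument is to check that $g$ also solves this ODE. Differentiating termwise and using multilinearity and graded symmetry of the symmetric braces in the right-hand inputs gives
$$g'(t) = \sum_{n \geq 0}\frac{1}{n!}\bigl\{a; \mu(t), \ldots, \mu(t), \mu'(t)\bigr\}.$$
On the other hand, the defining recursion of the symmetric braces yields
$$\{a; \underbrace{\mu, \ldots, \mu}_n\} \star \lambda = \{a; \underbrace{\mu, \ldots, \mu}_n, \lambda\} + n\,\{a; \underbrace{\mu, \ldots, \mu}_{n-1}, \mu \star \lambda\},$$
where the factor $n$ arises because the substitutions $b_i \mapsto b_i \star \lambda$ all coincide by graded symmetry. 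Summing against $1/n!$, re-indexing $n \mapsto n+1$ in the second family, and using multilinearity in the last slot yields
$$g(t) \star \lambda = \sum_{n \geq 0}\frac{1}{n!}\bigl\{a; \mu(t), \ldots, \mu(t),\ \lambda + \mu(t) \star \lambda\bigr\}.$$
Matching the two expressions therefore reduces to the scalar identity $\mu'(t) = \lambda + \mu(t) \star \lambda$, equivalently $\frac{d}{dt}\, e^{t\lambda} = e^{t\lambda} \star \lambda$.

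This last identity is immediate from the defining formula $\lambda^{\star(n+1)} = \lambda^{\star n} \star \lambda$ for the right-iterated product together with the left unitality $1 \star \lambda = \lambda$:
$$\frac{d}{dt}\sum_{n \geq 0} \frac{t^n}{n!}\lambda^{\star n} = \sum_{n \geq 0} \frac{t^n}{n!} \lambda^{\star(n+1)} = \Bigl(\sum_{n \geq 0} \frac{t^n}{n!}\lambda^{\star n}\Bigr) \star \lambda.$$
Setting $t = 1$ then produces the desired identity $e^{r_\lambda}(a) = a \cc e^\lambda$. The only real obstacle is combinatorial bookkeeping: correctly isolating the symmetry factor $n$ in the brace recursion and matching it against the factor produced by the product rule applied to $n$ identical copies of $\mu(t)$. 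Once those factors balance, the computation collapses to the elementary pre-Lie ODE satisfied by $e^{t\lambda}$.
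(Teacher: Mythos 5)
Your argument is correct, but it takes a genuinely different route from the paper. The paper proves the identity by an explicit combinatorial computation in the free pre-Lie algebra on two generators, realized as rooted trees: both sides are expanded as series over rooted trees with Connes--Moscovici-type coefficients $n_t/\nu_t!$, and the identity reduces to a counting argument for levelizations of a tree in terms of shuffles of the levelizations of its root subtrees. You instead interpolate both sides with a parameter $t$, setting $f(t)=e^{tr_\lambda}(a)$ and $g(t)=a\circledcirc e^{t\lambda}$, and show they satisfy the same initial value problem $\varphi'=\varphi\star\lambda$, $\varphi(0)=a$; your verification that $g$ solves it correctly combines the product rule with the defining recursion of the symmetric braces, $\{a;\mu,\ldots,\mu\}\star\lambda=\{a;\mu,\ldots,\mu,\lambda\}+n\{a;\mu,\ldots,\mu,\mu\star\lambda\}$, and everything collapses onto $\frac{d}{dt}e^{t\lambda}=e^{t\lambda}\star\lambda$, which holds precisely because the pre-Lie exponential iterates the product on the right. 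The uniqueness step is indeed a routine weight induction, since $\lambda$ has trivial weight-zero component and each weight component of either side is a polynomial in $t$. Your method is in the spirit of the paper's own proof of Proposition~\ref{prop:GaugeActionPreLie} (and of the Agrachev--Gamkrelidze ``formal flows'' viewpoint), and it is shorter; what the paper's combinatorial proof buys in exchange is the explicit rooted-tree expansion of $e^\lambda$ and of the circle product, which is reused later (for instance in the closed formula for the $\Phi$-kernel and in Lemma~\ref{lem:TechR}). One small caveat, shared with the paper's own reduction to the free pre-Lie algebra on rooted trees: the symmetry manipulations are sign-free only when $\lambda$ has even degree, which is the case in all applications ($\lambda\in g_0$).
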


\begin{proof}
It is enough to  prove this result in the free pre-Lie algebra on two generators $\lambda$ and $a$. 
Recall from \cite{ChapotonLivernet01} that this free pre-Lie algebra is given by rooted trees with vertices labelled by $\lambda$ and $a$, where the pre-Lie product of two such trees is equal to the sum over all possible ways to graft the root of the  second tree at the top of a vertex of the first one. The unit is given by the trivial tree 
$\vcenter{\xymatrix@R=8pt{ *{} \ar@{-}[d]\\ 
*{}}}$
 with no vertices. 

For any rooted tree $t\in \mathsf{RT}$, we denote by $\nu_t$ the number of its vertices and by $n_t$ by number of ways to put its vertices on levels with one and only one vertex per level. (These numbers are also called the \textit{Connes--Moscovici weights}, see \cite{Kreimer99, Brouder00}). We will call this process ``levelization''.  In the example of the tree 
$$ 
t=
\vcenter{
\xymatrix@M=5pt@R=10pt@C=10pt{
 & & *+[o][F-]{}\ar@{-}[d]\\
*+[o][F-]{}\ar@{-}[dr] &&  *+[o][F-]{}\ar@{-}[dl] \\
& *+[o][F-]{}
}}\ ,
$$
the number of levelizations is equal to $n_t=3$ since they are only 
$$ 
\vcenter{
\xymatrix@M=5pt@R=10pt@C=10pt{
*{}\ar@{..}[rr] & & *+[o][F-]{}\ar@{-}[d]\\
*{}\ar@{..}[rr] &&  *+[o][F-]{}\ar@{-}[ddl] \\
*+[o][F-]{}\ar@{-}[dr]\ar@{..}[rr] &&  *{}\\
*{}\ar@{..}[r]& *+[o][F-]{} & *{}\ar@{..}[l]
}}\ , \quad 
\vcenter{
\xymatrix@M=5pt@R=10pt@C=10pt{
*{}\ar@{..}[rr] & & *+[o][F-]{}\ar@{-}[dd]\\
*+[o][F-]{}\ar@{-}[ddr]\ar@{..}[rr] &&  *{} \\
*{}\ar@{..}[rr] &&  *+[o][F-]{}\ar@{-}[dl]\\
*{}\ar@{..}[r]& *+[o][F-]{} & *{}\ar@{..}[l]
}}\ ,
\quad \text{and} \quad
\vcenter{
\xymatrix@M=5pt@R=10pt@C=10pt{
*+[o][F-]{}\ar@{-}[dddr]\ar@{..}[rr] & & *{}\\
*{}\ar@{..}[rr] &&   *+[o][F-]{}\ar@{-}[d]  \\
*{}\ar@{..}[rr] &&  *+[o][F-]{}\ar@{-}[dl]\\
*{}\ar@{..}[r]& *+[o][F-]{} & *{}\ar@{..}[l]
}}\ .
$$
Since the formula for the pre-Lie exponential 
$e^\lambda:=1 +\lambda + \frac{\lambda^{\star 2}}{2!} + \frac{\lambda^{\star 3}}{3!} +\cdots$
is defined by iterating the product $\star$ on the right-hand side, it amounts to successively graft the tree with one vertex labelled by  $\lambda$ on the top of the root vertex labelled by $\lambda$. Therefore, the exponential of $\lambda$ is equal to the following series of rooted trees 
$$e^\lambda=\sum_{t\in \mathsf{RT}} \frac{n_t}{\nu_t!} \, t(\lambda)\ , $$
where the notation $t(\lambda)$ stands for the rooted trees $t$ with all vertices labelled by $\lambda$. 

Since the element 
$$e^{r_\lambda}(a)=a +  a \star \lambda + {\textstyle \frac12} (a\star \lambda)\star \lambda + \cdots$$
is produced by the same kind of formula, it is equal to 
$$e^{r_\lambda}(a)=\sum_{t \in  \mathsf{rRT}} \frac{n_t}{(\nu_t-1)!} \, t(a,\lambda)\ , $$
where the sum runs  over reduced rooted trees $ \mathsf{rRT}$, i.e. trees with at least one vertex, and where the notation $t(a,\lambda)$ stands for the rooted trees $t$ with the root vertex labelled by $a$ and all other vertices labelled by $\lambda$.

One can easily prove by induction on $n$ that the $n$th symmetric brace in the free pre-Lie algebra is equal to the labelled corolla with $n+1$ vertices
$$\{a, \underbrace{b, \ldots, b}_{n}\}=
\vcenter{\xymatrix@M=5pt@R=10pt@C=10pt{
*+[o][F-]{b}\ar@{-}[dr]^1 & \cdots & *+[o][F-]{b}\ar@{-}[dl]_n \\
& *+[o][F-]{a} \ar@{-}[u] &}}
\ .
$$
Therefore, the circle product $a\circledcirc e^\lambda$ is equal to the sum
$$a\circledcirc e^\lambda=\sum_{k\ge 0}\,  \sum_{t_1, \ldots, t_k  \in \mathsf{rRT}} 
\frac{n_{t_1}\cdots n_{t_k}}{\nu_{t_1}!\cdots \nu_{t_k}!} \frac{n(t_1, \ldots, t_k)}{k!}\ 
\vcenter{\xymatrix@M=5pt@R=10pt@C=10pt{
*+[F-]{t_1}\ar@{-}[dr] & *+[F-]{t_2}\ar@{-}[d] & \cdots & *+[F-]{t_k}\ar@{-}[dll] \\
& *+[o][F-]{a} \ar@{-}[u] && }}
\ , 
$$
where $n(t_1, \ldots, t_k)$ is equal to the number of times the configuration of trees $t_1, \ldots, t_k$ appears in the symmetric $k$-fold product $e^\lambda\cdots e^\lambda$. Notice that, if we denote by $i_1,\ldots, i_l$ respectively the number of trees of type $1,\ldots, l$ in $( t_1, \ldots, t_k)$, then 
$$\frac{n(t_1, \ldots, t_k)}{k!}=\frac{1}{i_1!\cdots i_l!}\ . $$

To conclude, let us consider a rooted tree $t(a, \lambda)$ with the root vertex labelled by $a$ and all other vertices labelled by $\lambda$. We first denote by $t_1, \ldots, t_k$ the subtrees which are attached to the root vertex $a$. They individually admit $n_{t_1}, \ldots, n_{t_k}$  levelizations. Given one levelization for each of these trees, the number of ways to induce a  levelization for the global tree $t$ is equal to the number of $(\nu_{t_1}, \ldots, \nu_{t_k})$-shuffles, that is 
$$\frac{(\nu_{t_1}+\cdots+\nu_{t_k})!}{\nu_{t_1}!\cdots \nu_{t_k}!}=
\frac{(\nu_{t}-1)!}{\nu_{t_1}!\cdots \nu_{t_k}!}\ .$$
Then, we forget about the labeling $t_1, \ldots, t_k$ of the subtrees. This gives 
$\frac{k!}{n(t_1, \ldots, t_k)}$ 
times the same levelization of the tree $t$. 
In the end, the number of levelizations of a tree $t$ is equal to 
$$n_t=\frac{(\nu_{t}-1)!}{\nu_{t_1}!\cdots \nu_{t_k}!} \frac{n(t_1, \ldots, t_k)}{k!}\ , $$
which concludes the proof. 
\end{proof}

\begin{remark}
 A  similar kind of formula, relating the two products $\star$ and $\cc$, can be found in \cite[Formula~$(58)$]{Manchon15}. 
\end{remark}

\begin{theorem}\label{thm:MainIso}
Under the pre-Lie exponential map, the gauge group is isomorphic to the group made up of group-like elements equipped with the circle product:
$$\boxed{\Gamma=\left( g_0, \BCH, 0
\right)\cong  
\left(\mathrm{G}, \circledcirc, 1\right)=\mathfrak{G}}
\ .
$$
\end{theorem}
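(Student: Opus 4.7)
My plan is to use Proposition~\ref{prop:MAINrelation} to reduce the group morphism property to a classical Lie-theoretic identity in $\End(A)$. Since Lemma~\ref{lemma:ExponentialII} already exhibits $e \colon g_0 \to \mathrm{G}$ as a bijection of sets, the task is to verify the identity
\[
e^\lambda \cc e^\mu = e^{\BCH(\lambda, \mu)}
\]
for all $\lambda, \mu \in g_0$; once this is established, both the fact that $\cc$ endows $\mathrm{G}$ with a group structure and the isomorphism $\Gamma \cong \mathfrak{G}$ follow formally.

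The first step is to reformulate both factors in terms of right-multiplication operators $r_\mu \in \End(A)$. Applying Proposition~\ref{prop:MAINrelation} with $a = 1$ gives $e^{r_\mu}(1) = 1 \cc e^\mu = e^\mu$, so $e^\mu$ is simply the orbit of the unit under $e^{r_\mu}$. Applying the same proposition with $a = e^\lambda$ then yields
\[
e^\lambda \cc e^\mu = e^{r_\mu}(e^\lambda) = \bigl(e^{r_\mu} \circ e^{r_\lambda}\bigr)(1),
\]
reducing the problem to computing the composition of two right-multiplication exponentials.

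The key computation is that the right-symmetric associator axiom yields the identity $[r_\mu, r_\nu] = -r_{[\mu, \nu]}$ in $\End(A)$, or equivalently that $\mu \mapsto -r_\mu$ is a Lie algebra morphism from $g$ to $\End(A)$. This is a direct one-line check from the defining identity $(a\star \nu)\star \mu - a\star(\nu\star \mu) = (a\star \mu)\star \nu - a\star(\mu\star \nu)$. The weight grading assumption~(\ref{eqn:weight}) implies that each $r_\mu$ strictly raises the weight, so all exponentials and the Baker--Campbell--Hausdorff series converge. Functoriality of $\BCH$ along this Lie morphism then gives $e^{-r_\mu} \circ e^{-r_\nu} = e^{-r_{\BCH(\mu, \nu)}}$, and inverting both sides produces the operator identity
\[
e^{r_\nu} \circ e^{r_\mu} = e^{r_{\BCH(\mu, \nu)}}.
\]

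Substituting this into the previous expression and evaluating at $1$ gives
\[
e^\lambda \cc e^\mu = e^{r_{\BCH(\lambda, \mu)}}(1) = e^{\BCH(\lambda, \mu)},
\]
which is the desired group morphism property. Combined with the bijectivity from Lemma~\ref{lemma:ExponentialII}, this concludes the proof. I expect the main subtlety to lie in the careful bookkeeping of the sign in $[r_\mu, r_\nu] = -r_{[\mu, \nu]}$, which accounts for the reversal of arguments in the resulting $\BCH$ formula and explains conceptually why the non-associative (and non-commuting) right multiplications in a pre-Lie algebra nevertheless integrate to an honest associative group law.
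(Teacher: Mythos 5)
Your proposal is correct, and its skeleton coincides with the paper's: both reduce the statement to the bijectivity of the pre-Lie exponential (Lemma~\ref{lemma:ExponentialII}) together with the identity $e^{\mu}\cc e^{\lambda}=e^{r_\lambda}(e^{\mu})$ supplied by Proposition~\ref{prop:MAINrelation}, and then to the single formula $e^{r_\lambda}(e^{\mu})=e^{\BCH(\mu,\lambda)}$. The difference is what happens to that last formula: the paper simply cites it as a known fact from Manchon's survey, whereas you prove it from scratch. Your argument for it is sound: writing $e^{\mu}=e^{r_\mu}(1)$ (valid by the left-unit axiom $1\star x=x$, or by Proposition~\ref{prop:MAINrelation} with $a=1$), the problem becomes the operator identity $e^{r_\mu}\circ e^{r_\lambda}=e^{r_{\BCH(\lambda,\mu)}}$ in $\End(A)$; the right-symmetry of the associator does give $[r_\mu,r_\nu]=-r_{[\mu,\nu]}$ on degree-zero elements, so $\mu\mapsto -r_\mu$ is a Lie morphism into the associative algebra $\End(A)$, the weight grading~(\ref{eqn:weight}) makes every $r_\mu$ strictly weight-raising so that all series converge, and naturality of $\BCH$ plus inversion yields the claimed operator identity. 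What your route buys is a self-contained proof that makes transparent \emph{why} the non-associative right multiplications integrate to a group law (they generate an honest associative subalgebra of $\End(A)$ up to the sign twist you flag); what the paper's route buys is brevity. Either way, the final transport of structure along the bijection $e$ correctly delivers both the group axioms for $(\mathrm{G},\cc,1)$ and the isomorphism with $\Gamma$.
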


\begin{proof}
It is known that 
$$e^{\BCH(\mu, \lambda)}=e^{r_\lambda}(e^\mu) \ , $$ 
see \cite{Manchon11}. From Proposition~\ref{prop:MAINrelation}, we get 
$$e^{\BCH(\mu, \lambda)}=e^\mu\circledcirc e^\lambda \ , $$ 
which concludes the proof, with Lemma~\ref{lemma:ExponentialII}.
\end{proof}
Notice that the group $\mathfrak{G}$ is equal to the ``group of formal flows'' of \cite{AgrachevGamkrelidze80}, which was defined without the circle product. 

\begin{corollary}\label{cor:AssocUnit}
The circle product $\circledcirc$ is associative and, on the space $\lbrace1\rbrace\times g$,  the inverse of $e^\lambda$ for $\circledcirc$ is $e^{-\lambda}$.
\end{corollary}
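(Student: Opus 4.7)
The plan is to reduce both claims directly to the key computation of Theorem~\ref{thm:MainIso}, extended from $g_0$ to the entire graded space $g$. The crucial observation is that the two identities driving that proof, namely Proposition~\ref{prop:MAINrelation} and the formula $e^{\BCH(\mu,\lambda)}=e^{r_\lambda}(e^\mu)$ of \cite{Manchon11}, hold for any $\mu,\lambda$ with trivial weight-zero component, with no restriction on homological degree.

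First I would observe that the inductive argument in the proof of Lemma~\ref{lemma:ExponentialII} proceeds weight by weight and nowhere uses that the degree is zero; hence the exponential extends to a bijection $e\colon g \xrightarrow{\cong} \{1\}\times g$. Consequently every element of $\{1\}\times g$ can be written uniquely as $e^\mu$ for some $\mu \in g$. Combining Proposition~\ref{prop:MAINrelation} with the recalled BCH identity then gives, for all $\mu,\lambda \in g$,
$$e^\mu \cc e^\lambda = e^{r_\lambda}(e^\mu) = e^{\BCH(\mu,\lambda)},$$
so the circle product on $\{1\}\times g$ is transported by the exponential bijection to the BCH product on $g$, exactly as in Theorem~\ref{thm:MainIso}.

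Associativity of $\cc$ is then immediate from the associativity of BCH in the graded Lie algebra $g$:
$$(e^\mu \cc e^\lambda)\cc e^\nu = e^{\BCH(\BCH(\mu,\lambda),\nu)} = e^{\BCH(\mu,\BCH(\lambda,\nu))} = e^\mu \cc (e^\lambda \cc e^\nu).$$
The inverse assertion follows just as quickly: the same formula yields
$$e^\lambda \cc e^{-\lambda} = e^{\BCH(\lambda,-\lambda)} = e^0 = 1,$$
and symmetrically $e^{-\lambda}\cc e^\lambda = 1$, so $e^{-\lambda}$ is the two-sided $\cc$-inverse of $e^\lambda$.

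The only subtlety worth a careful verification is that the two building-block identities extend from $g_0$ to $g$. Proposition~\ref{prop:MAINrelation} is already stated in this generality, its only hypothesis being the weight condition on $\lambda$; and the identity $e^{\BCH(\mu,\lambda)}=e^{r_\lambda}(e^\mu)$ is proved in \cite{Manchon11} as a universal formal identity in the free pre-Lie algebra, hence insensitive to any grading. So this point is not so much an obstacle as a bookkeeping check, and the proof is essentially a transcription of Theorem~\ref{thm:MainIso} with $g_0$ everywhere replaced by $g$.
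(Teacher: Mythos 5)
Your proof is correct and follows essentially the same route as the paper, whose entire argument is ``transport of structure under the isomorphism of Theorem~\ref{thm:MainIso}'', i.e.\ the identity $e^\mu\cc e^\lambda=e^{\BCH(\mu,\lambda)}$; you merely make explicit the (harmless) extension of Lemma~\ref{lemma:ExponentialII} and of that identity from $g_0$ to all of $g$, which the paper leaves implicit. The only cosmetic addition one might make is that associativity with an arbitrary (non--group-like) left factor, as used later in Proposition~\ref{prop:GaugeActionPreLie}, follows from your group-like case by left-linearity of $\cc$.
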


\begin{proof}
This is obtained by the transport of structure under the isomorphism of Theorem~\ref{thm:MainIso}. 
\end{proof}

When one does not know the exponential form of a group-like element $1-\mu$, with $\mu\in g$, its inverse, for the circle product, can be described as follows. We consider the free pre-Lie algebra  $\mathrm{preLie}(a)$
on one generator $a$, which is given by rooted trees (with vertices labelled by $a$). By the universal property, there exists one morphism of pre-Lie algebras from $\mathrm{preLie}(a)$ to $\a$. For any rooted tree $t$, we denote  by $t(\mu)$ the image 
of the tree $t$ 
under this morphism.

A morphism between two trees is a map sending  vertices to vertices and edges to edges which respects the adjacency condition. We denote by $\mathrm{Aut}\, t$ the group of automorphisms of a tree $t$ and by  $|\mathrm{Aut}\, t|$ its  cardinal.

\begin{proposition}\label{prop:ccInverse}
In any preLie algebra $\a$ satisfying the weight grading assumption (\ref{eqn:weight}), the inverse, for the circle product, of a group-like element $1-\mu$, with $\mu\in g$, is given by 
$$(1-\mu)^{\cc -1}=  \sum_{t\in \mathsf{RT}} 
\frac{1}{|\mathrm{Aut}\, t|}\, t(\mu)\  .$$
\end{proposition}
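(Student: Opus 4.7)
My plan is to reduce to the universal case. Since the proposition asserts an equality of formal series in $\mu$ whose coefficients are pre-Lie polynomials in $\mu$, and since the unique pre-Lie morphism $\mathrm{preLie}(a) \to \a$ sending $a \mapsto \mu$ also transports the circle product $\cc$ and respects the weight grading (number of vertices), it will suffice to prove the identity in the weight-graded completion of the free pre-Lie algebra on one generator, with $a$ playing the role of $\mu$.

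In this universal setting I would write $X := \sum_{t \in \mathsf{RT}} \frac{1}{|\mathrm{Aut}\, t|}\, t$ and use the left-linearity of $\cc$ together with $1\cc Z = Z$ to rephrase the target identity $(1-a)\cc X = 1$ as $a \cc X = X - 1$. Splitting off the trivial tree, $X = 1 + Y$ with $Y$ supported on trees with at least one vertex, and unfolding the definition of $\cc$ via symmetric braces, the claim reduces to the fixed-point equation
$$Y = \sum_{n \ge 0} \frac{1}{n!}\, \bigl\{a;\, \underbrace{Y, \ldots, Y}_{n}\bigr\}.$$
The symmetric braces are interpreted in $\mathrm{preLie}(a)$ as corollas, as recalled inside the proof of Proposition~\ref{prop:MAINrelation}, so $\{a; s_1, \ldots, s_n\}$ is the rooted tree with a fresh $a$-labeled root onto which $s_1, \ldots, s_n$ are grafted.

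The core of the argument will then be a coefficient count. For a non-trivial rooted tree $t$, I would group its root subtrees by isomorphism type as $t_1^{k_1}, \ldots, t_r^{k_r}$ with $n = \sum_i k_i$, and invoke the classical wreath-product identity
$$|\mathrm{Aut}\, t| \;=\; \prod_{i=1}^r k_i! \cdot \prod_{i=1}^r |\mathrm{Aut}\, t_i|^{k_i}.$$
The multinomial coefficient $n!/\prod_i k_i!$ counting ordered $n$-tuples $(s_1,\ldots,s_n)$ with that multiset of isomorphism types, the $1/n!$ from the exponential-style symmetrization, and the factor $\prod_i |\mathrm{Aut}\, t_i|^{-k_i}$ contributed by $Y$ will combine to exactly $1/|\mathrm{Aut}\, t|$, matching the coefficient of $t$ in $Y$. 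Once the identity $(1-a)\cc X = 1$ is established universally, it holds after specialization to $\a$, and Corollary~\ref{cor:AssocUnit} then upgrades this one-sided inverse to a two-sided one inside the group $\mathfrak{G}$.

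I expect the only delicate point to be the bookkeeping around repeated isomorphic subtrees in the automorphism-group identity above; everything else is formal manipulation of series of rooted trees.
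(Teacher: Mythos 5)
Your proposal is correct and follows essentially the same route as the paper: both reduce to the (completed) free pre-Lie algebra on one generator, interpret the braces as corollas, and match the coefficient of each non-trivial tree via the decomposition of its automorphism group into the automorphisms permuting isomorphic root subtrees times the automorphisms of the subtrees themselves. Your rephrasing as the fixed-point equation $a\cc X = X-1$ is just a rearrangement of the paper's cancellation argument, and your explicit appeal to Corollary~\ref{cor:AssocUnit} to pass from a one-sided to a two-sided inverse is a small point the paper leaves implicit.
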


\begin{proof}
It is enough to prove that 
$$(1-a)\cc  \left(\sum_{t\in \mathsf{RT}} 
\frac{1}{|\mathrm{Aut}\, t|}\, t(a)\right)=1\  .$$
in the free pre-Lie algebra $\mathrm{preLie}(a)$. The left-hand term is equal to a sum over the set of rooted trees. The trivial tree $\vcenter{\xymatrix@R=8pt{ *{} \ar@{-}[d]\\ 
*{}}}$
  appears only once. Let $s\in \mathsf{rRT}$ denote a non-trivial rooted tree, that is with at least one vertex. There are two ways to get the term $s(a)$. The first way comes with the element $1$ of $(1-a)$; so the coefficient is $\frac{1}{|\mathrm{Aut}\, s|}$. The other way comes with the element $a$ of $(1-a)$. The non-trivial tree $s$ is equal to a root  vertex grafted by trees $t_1, \ldots, t_k$. 
  Therefore, the coefficient of the element $s(a)$ arising in that way is equal to 
$$-\frac{ |\Sy_{(t_1, \ldots, t_k)}|}{k!} \cdot \frac{1}{|\mathrm{Aut}\, t_1|} \cdots  \frac{1}{|\mathrm{Aut}\, t_k|} 
\ ,$$
where $\Sy_{(t_1, \ldots, t_k)}$ denotes the permutation group of the $k$-tuple $(t_1, \ldots, t_k)$.
Notice that, if we denote by $i_1,\ldots, i_l$ respectively the number of trees of type $1,\ldots, l$ in $( t_1, \ldots, t_k)$, then 
$$\frac{ |\Sy_{(t_1, \ldots, t_k)}|}{k!}=\frac{i_1!\cdots i_l!}{k!}=\frac{1}{n(t_1, \ldots, t_k)}\ . $$
However,  the automorphism group  of the tree $s$ is 
  isomorphic to 
  $$\mathrm{Aut}\, s\ \cong\  \mathrm{Aut}(t_1, \ldots, t_k) \times \mathrm{Aut}\, t_1\times \cdots \times \mathrm{Aut}\, t_k \ , $$
  where the first term denotes the automorphism group of the corolla where the top vertices are labelled respectively by $t_1, \ldots, t_k$. Therefore, the coefficient of the element $s(a)$ arising in that way is equal to $-\frac{1}{|\mathrm{Aut}\, s|}$, which concludes the proof. 
\end{proof}

So, in this case, the exponential group $\mathrm{G}$ exists in the bigger algebra $\a$ and realizes the gauge group $\Gamma$ coming from the Lie algebra $\g$. Using this realization of the gauge group, the action on the variety of Maurer--Cartan elements is given by the following formula. 

\begin{proposition}\label{prop:GaugeActionPreLie}
Under the abovementioned weight grading assumption, the infinitesimal  Lie algebra $\g_0$ action  defined in Section~\ref{subsec:InfAction} integrates to $t=1$ and is equal to the following action of the gauge group  on the variety of Maurer--Cartan elements : 
$$ \boxed{\lambda.\at=e^{\ad_\lambda}(\at)= \left(e^\lambda \star \at\right) \circledcirc e^{-\lambda}}   \ .$$
\end{proposition}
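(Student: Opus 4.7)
I prove the stronger operator-level identity
$\psi_t = e^{t\ad_\lambda}$, where $\psi_t(\at) := (e^{t\lambda} \star \at) \circledcirc e^{-t\lambda}$, as a power series in $t$ valued in $\End(\mathfrak{a}^+)$; specializing at $t=1$ yields the proposition. Both sides are well-defined thanks to the weight-grading convergence assumption~(\ref{eqn:weight}). At $t=0$ they agree: $\psi_0(\at) = (1 \star \at) \circledcirc 1 = \at$ by the left-unit axiom for $\star$ and Corollary~\ref{cor:AssocUnit}. It therefore suffices to verify that both satisfy the linear ODE $\dot{f} = f \circ \ad_\lambda$; the result will then follow from uniqueness of solutions in $\End(\mathfrak{a}^+)[[t]]$. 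For $\phi_t := e^{t\ad_\lambda}$ the ODE is immediate from the definition of the operator exponential, so the entire content of the proof is the verification for $\psi_t$.

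By Proposition~\ref{prop:MAINrelation} applied with $-t\lambda$ in place of $\lambda$, I rewrite $\psi_t(\at) = e^{-t r_\lambda}(e^{t\lambda} \star \at)$. The recurrence $\lambda^{\star n} = \lambda^{\star(n-1)} \star \lambda$ gives $\tfrac{d}{dt} e^{t\lambda} = e^{t\lambda} \star \lambda$. Using the product rule, together with the fact that $r_\lambda$ commutes with its own exponential $e^{-t r_\lambda}$, one obtains
\[
\psi_t'(\at) \ =\ -\,r_\lambda\, \psi_t(\at) \ +\ e^{-tr_\lambda}\bigl((e^{t\lambda} \star \lambda) \star \at\bigr).
\]

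The key algebraic input is then the pre-Lie axiom applied to the triple $(e^{t\lambda}, \lambda, \at)$: since the middle argument $\lambda$ has degree zero, the graded sign is trivial and one finds
\[
(e^{t\lambda} \star \lambda) \star \at \ =\ e^{t\lambda} \star \ad_\lambda(\at) \ +\ r_\lambda(e^{t\lambda} \star \at).
\]
Substituting, the term $e^{-tr_\lambda} r_\lambda(e^{t\lambda} \star \at)$ equals $r_\lambda\, \psi_t(\at)$ by commutativity and cancels against $-r_\lambda\, \psi_t(\at)$, leaving $\psi_t'(\at) = e^{-tr_\lambda}(e^{t\lambda} \star \ad_\lambda(\at)) = \psi_t(\ad_\lambda(\at))$, which is the desired ODE.

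The main obstacle is identifying the correct form of the pre-Lie exchange: placing $\lambda$ in the middle of the pre-Lie axiom is what trades the non-associative piece $(e^{t\lambda} \star \lambda) \star \at$ for a sum containing both an $\ad_\lambda$-term and a pure right-multiplication by $\lambda$, the latter matching exactly the contribution coming from differentiating the outer $\circledcirc e^{-t\lambda}$ factor. Without this specific rearrangement, the non-commutativity of $\star$ would prevent the ODE from closing onto $\psi_t \circ \ad_\lambda$, and the two sides of the desired formula could not be compared.
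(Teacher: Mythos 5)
Your proof is correct, and it takes a genuinely different and shorter route than the paper's. The paper keeps the circle product in play throughout and must therefore establish two auxiliary identities --- Formula~(\ref{eqn:Prelie1}), $a\circledcirc (1+b; (1+b)\star c)=(a\circledcirc (1+b))\star c$, and Formula~(\ref{eqn:Prelie2}), $\left( \left(a \circledcirc e^{ \lam}\right)\star b   \right)\circledcirc e^{-\lam} = a\star \left( \left(e^\lam \star b\right)\cc e^{-\lam}\right)$ --- by rooted-tree computations in free pre-Lie algebras, together with the associativity and inverses of $\cc$ from Corollary~\ref{cor:AssocUnit}; this yields the ODE in the form $\varphi'=\ad_\lambda\circ\varphi$. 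You instead invoke Proposition~\ref{prop:MAINrelation} once, with $-t\lambda$ in place of $\lambda$, to rewrite the outer factor as $(\,\cdot\,)\circledcirc e^{-t\lambda}=e^{-tr_\lambda}(\,\cdot\,)$, so that $\psi_t=e^{-tr_\lambda}\circ l_{e^{t\lambda}}$ becomes a composite of linear operators whose $t$-derivative is elementary; the only algebraic input beyond that is the bare right-symmetry of the associator applied to $(e^{t\lambda},\lambda,\at)$ (with trivial sign since $|\lambda|=0$), which trades $(e^{t\lambda}\star\lambda)\star\at$ for $e^{t\lambda}\star\ad_\lambda(\at)+r_\lambda(e^{t\lambda}\star\at)$ and lets the $r_\lambda$ terms cancel. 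Your ODE comes out as $\psi_t'=\psi_t\circ\ad_\lambda$ rather than $\ad_\lambda\circ\psi_t$, but both linear operator ODEs with $\psi_0=\id$ have $e^{t\ad_\lambda}$ as their unique formal solution, so the conclusion is the same. What the paper's longer route buys is the two identities~(\ref{eqn:Prelie1}) and~(\ref{eqn:Prelie2}) themselves, which are of independent use; what yours buys is economy and transparency --- no further free-pre-Lie/rooted-tree arguments are needed beyond those already inside Proposition~\ref{prop:MAINrelation}. Two cosmetic points: the unit law $a\circledcirc 1=a$ used for $\psi_0(\at)=\at$ is stated with the definition of $\circledcirc$ rather than in Corollary~\ref{cor:AssocUnit}, and the ambient algebra is $\a$ (the paper does not form $\a^+$ in the pre-Lie setting); neither affects the argument.
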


\begin{proof}
The weight grading assumption ensures the convergence of $e^{t\ad_\lambda}(\at)$ at $t=1$ to $e^{\ad_\lambda}(\at)$. 
The function $\gamma(t):=e^{t\ad_\lambda}(\at)$ is the unique solution to the differential equation 
$${\gamma}'(t)=\ad_\lambda({\gamma}(t))=\lambda\star \gamma(t) - \gamma(t)\star \lambda$$ such that $\gamma(0)=\at$.
Let us show that the function 
$\varphi(t):=\left(e^{t\lambda} \star \at\right) \circledcirc e^{-t\lambda}$
satisfies the same differential equation, which will conclude the proof. First, the value of the function $\varphi$ at $t=0$ is 
$$\varphi(0)=(1\star \at)\circledcirc 1=\at\ .$$
Then, we have 
$$\varphi'(t):=\underbrace{\left((e^{t\lambda}\star \lambda) \star \at\right) \circledcirc e^{-t\lambda}}_{(i)}
-\underbrace{\left(e^{t\lambda} \star \at\right) \circledcirc (e^{-t\lambda}; e^{-t\lambda}\star \lambda)}_{(ii)}\ , $$
where the notation of the right-hand term stands for 
$$a\circledcirc (1+b; c):=\sum_{n\ge 0} {\displaystyle\frac{1}{n!}}\{a; \underbrace{b, \ldots, b}_{n}, c\} \ .$$
Let us first prove the general formula 
\begin{eqnarray}\label{eqn:Prelie1}a\circledcirc (1+b; (1+b)\star c)=(a\circledcirc (1+b))\star c\ . 
\end{eqnarray}
As in the proof of Proposition~\ref{prop:MAINrelation}, let us perform this computation is the free pre-Lie algebra on three generators $a$, $b$, and $c$. Notice that, by linearity, the left-hand side is equal to 
\begin{eqnarray*}
a\circledcirc (1+b; (1+b)\star c)&=&a\circledcirc (1+b; c)+a\circledcirc (1+b; b\star c)   \\
&=&\sum_{n\ge 0} \frac{1}{n!} \ 
\vcenter{\xymatrix@M=5pt@R=10pt@C=10pt{
*+[o][F-]{b}\ar@{-}[dr]^1 & \cdots & *+[o][F-]{b}\ar@{-}[dl]_n &*+[o][F-]{c}\ar@{-}[dll] \\
& *+[o][F-]{a} \ar@{-}[u]&&
}}+\sum_{n\ge 0} \frac{1}{n!} \ 
\vcenter{
\xymatrix@M=5pt@R=10pt@C=10pt{
 & & &*+[o][F-]{c}\ar@{-}[d]\\
*+[o][F-]{b}\ar@{-}[dr]^1 & \cdots & *+[o][F-]{b}\ar@{-}[dl]_n &*+[o][F-]{b}\ar@{-}[dll] \\
& *+[o][F-]{a}\ar@{-}[u]&&
}}\ .
\end{eqnarray*}
The right-hand term is equal to 
\begin{eqnarray*}
(a\circledcirc (1+b))\star c= 
\left(\sum_{n\ge 0} \frac{1}{n!} \ 
\vcenter{\xymatrix@M=5pt@R=10pt@C=10pt{
*+[o][F-]{b}\ar@{-}[dr]^1 & \cdots & *+[o][F-]{b}\ar@{-}[dl]_n \\
& *+[o][F-]{a} \ar@{-}[u]&}}
\right)\star c\ .
\end{eqnarray*}
Since the pre-Lie product $\star$ amounts to grafting a vertex $c$ above the rooted trees labeled by $a$ and $b$, then there are two possibilities: one can graft the vertex $c$ above the root vertex $a$ or above one of the $n$ vertices $b$. So we recover precisely the two above sums, which proves Formula~(\ref{eqn:Prelie1}). 

Formula~(\ref{eqn:Prelie1}) applied to $a=e^{t\lambda} \star \at$, $1+b=e^{-t\lambda}$, and $c=\lambda$ proves 
that 
$$(ii)= \left(\left(e^{t\lambda} \star \at\right)\circledcirc e^{-t\lambda}\right) \star \lambda=\varphi(t)\star \lambda\ .$$

Applying Proposition~\ref{prop:MAINrelation} to $a=\lambda$ gives $e^{t\lambda}\star \lambda = \lambda \circledcirc e^{t \lambda}$ and then $(i)= \left( \left(\lam \circledcirc e^{t \lam}\right)\star \at   \right)\circledcirc e^{-t\lam}$.
Let us now prove the general formula 
\begin{eqnarray}\label{eqn:Prelie2}
 \left( \left(a \circledcirc e^{ \lam}\right)\star b   \right)\circledcirc e^{-\lam}
 =
 a\star \left( \left(e^\lam \star b\right)\cc e^{-\lam}\right)
 \ . 
\end{eqnarray}
Formula~(\ref{eqn:Prelie1}) gives  
$$ \left( \left(a \circledcirc e^{ \lam}\right)\star b   \right)\circledcirc e^{-\lam} =
 \left( a \circledcirc \left(e^{ \lam}; e^{\lam}\star b   \right)\right)\circledcirc e^{-\lam}
 = \left( (1+a) \circledcirc \left(e^{ \lam}; e^{\lam}\star b   \right)\right)\circledcirc e^{-\lam}
 -\left( e^\lam \star b\right) \cc e^{-\lam}\ .
$$
The associativity formula of Corollary~\ref{cor:AssocUnit}
$$\left((1+a)\cc \left(e^\lam +t e^\lam\star b \right)\right) \cc e^{-\lam}
=
(1+a)\cc \left(\left(e^\lam +t e^\lam\star b \right) \cc e^{-\lam}\right) \ .$$ 
for the circle product when derived and applied at $t=0$ gives
$$\left( (1+a) \circledcirc \left(e^{ \lam}; e^{\lam}\star b   \right)\right)\circledcirc e^{-\lam}
=
 (1+a) \circledcirc \left(e^{ \lam}\cc e^{-\lam}; \left(e^{\lam}\star b   \right) \cc  e^{-\lam}\right)\ .
 $$
Since $e^{-\lam}$ is the inverse of $e^\lam$ for the circle product, by  Corollary~\ref{cor:AssocUnit}, we finally get 
\begin{eqnarray*}
\left( \left(a \circledcirc e^{ \lam}\right)\star b   \right)\circledcirc e^{-\lam}&=& 
(1+a)\star \left( \left( e^\lam \star b \right) \cc e^{-\lam}\right)
 -\left( e^\lam \star b\right) \cc e^{-\lam}\\
 &=& 
a\star \left( \left( e^\lam \star b \right) \cc e^{-\lam}\right)\ .
\end{eqnarray*}
Formula~(\ref{eqn:Prelie2}) applied to $a=\lam$, $b=\at$, and $\lambda=t\lambda$ proves 
that 
$$(i)= \lam \star \left( \left(e^{t\lam} \star \at\right)\cc e^{-t\lam}\right)=\lambda  \star \varphi(t)
\ .$$
Therefore the function $\varphi(t)$ satisfies the same differential equation as the function $\gamma(t)$, that is 
$$\varphi'(t)=\lambda  \star \varphi(t) - \varphi(t)\star \lambda\ . $$ Hence, $\varphi(1)=\gamma(1)$, which is the formula of the proposition. 
\end{proof}

\paragraph{\sc Remark} Notice that when the pre-Lie product $\star$ is actually associative, we recover all the formulae and results of the previous sections. 

\section{Example : homotopy algebras over a Koszul operad}\label{sec:HomoAlgOp}
We can now apply the general pre-Lie deformation theory of the previous section to the case of homotopy algebras over a Koszul operad. This allows us to describe the associated Deligne groupoid in homotopical terms. This uses the crucial fact that the deformation theory of algebras over an operad is controlled by a Lie algebra coming from a pre-Lie algebra. 
\\

Let $\P$ be a (homogeneous) Koszul operad with Koszul dual cooperad $\P^{\ac}$. Let $(V, d)$ be a chain complex. Recall from \cite[Chapter~10]{LodayVallette12} that the set of algebra structures over the Koszul resolution $\P_\infty=\Omega \P^{\ac}$ on $V$ is given by the Maurer--Cartan elements 
$$\partial(\alpha)+\alpha \star \alpha=0 $$
of the convolution dg pre-Lie algebra 
$$\boxed{\g_{\P, V}:=\big(\Hom_\Sy(\overline{\P}^{\ac}, \End_V) , (\partial_V)_*, \star
\big)}\ ,$$
with the pre-Lie product is defined by 
$$f\star g : \oPa
 \xrightarrow{\Delta_{(1)}} \oPa \otimes \oPa
  \xrightarrow{f\otimes g}  \End_V \otimes \End_V 
  \xrightarrow{\gamma}  \End_V \ ,$$
{  where $\Delta_{(1)}$ is the infinitesimal decomposition map of the cooperad $\P^{\ac}$, which splits operations into two, see \cite[Section~$6.1.4$]{LodayVallette12}, and where $\gamma$ is the composition map of the endomorphism operad $\End_V$.}
Again, we choose to embed it inside the dg left unital pre-Lie algebra made up of \emph{all the maps} from the Koszul dual cooperad to the endomorphism operad 
$$ \g_{\P,V} \mono  \boxed{\a_{\P,V}:= \big(
\Hom_\Sy({\P}^{\ac}, \End_V),  (\partial_V)_*, \star, {1}
\big)}
\ ,$$
where the left unit is again given by 
${1} \ : \ \I \mapsto \id_V $ and where the second embedding is given by $\delta \mapsto (I\mapsto \partial_V)$.
(Notice that the trick which consists in considering the one-dimensional extension of $\g_{\P,V}$ to make the differential internal does not work for dg pre-Lie algebras.)

This preLie algebra is graded by the weight of the Koszul dual cooperad
$$\Hom_\Sy\big({{\P}^{\ac}}^{(n)}, \End_V\big)\ , $$ that is 
$$\Hom_\Sy\big({\P}^{\ac}, \End_V\big) =
\End(V)\times\prod_{n=1}^\infty 
\Hom_\Sy\big({{\P}^{\ac}}^{(n)}, \End_V \big)  \ . $$ 
Its elements are series 
$f=f_{(0)}+f_{(1)}+f_{(2)}+\cdots$, where $f_{(n)} :{{\P}^{\ac}}^{(n)}\to \End_V$.\\

In this pre-Lie algebra, the circle product admits the following closed description. 
\begin{lemma}\label{lem:circledcirc}
In the pre-Lie algebra $\a_{\P,V}$ the circle product of two maps 
$f,g$ such that $f(\I)=g(\I)=\id_V$ 
 is equal to 
$$f\cc g : \P^{\ac}
 \xrightarrow{\Delta} \P^{\ac}\circ \P^{\ac} 
  \xrightarrow{f\circ g}  \End_V \circ \End_V 
  \xrightarrow{\gamma}  \End_V \ .$$
\end{lemma}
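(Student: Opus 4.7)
The plan is to unwind both sides according to definitions and match them combinatorially. Writing $g = 1 + \bar g$ with $\bar g$ vanishing on $\I$, the definition of the circle product gives
$$f\cc g = \sum_{n\ge 0} \frac{1}{n!}\{f;\, \underbrace{\bar g,\ldots,\bar g}_n\}.$$
The first step is to identify the $n$-ary symmetric brace in the convolution pre-Lie algebra $\a_{\P,V}$. Since the pre-Lie product is defined by the infinitesimal decomposition $\Delta_{(1)}$ followed by the partial composition of $\End_V$, an induction on $n$ (mirroring the corolla computation in the proof of Proposition~\ref{prop:MAINrelation}) shows that, for any $\mu \in \P^{\ac}$,
$$\{f;\, h_1,\ldots, h_n\}(\mu) \;=\; \sum\, \gamma\bigl(f(\mu_{(0)});\; h_1(\mu_{(1)}),\ldots, h_n(\mu_{(n)})\bigr),$$
where the sum ranges over all ways of decomposing $\mu$, via the iterated infinitesimal coproduct, into a ``bottom'' $\mu_{(0)} \in \P^{\ac}$ of some arity $k \geq n$ together with $n$ ``top'' elements $\mu_{(1)},\ldots,\mu_{(n)}$ grafted at $n$ distinct input slots of $\mu_{(0)}$, the remaining $k-n$ slots being left bare.

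Secondly, I would plug in $h_1 = \cdots = h_n = \bar g$ and recognize the result in terms of the full cooperad coproduct $\Delta:\P^{\ac}\to\P^{\ac}\circ\P^{\ac}$. This coproduct delivers $\mu$ as $\mu_{(0)} \otimes (\nu_1,\ldots,\nu_k)$ with $\mu_{(0)}$ of arity $k$ and each $\nu_i \in \P^{\ac}$; plugging $\nu_i = \I$ corresponds exactly to leaving slot $i$ bare, using $g(\I) = \id_V$ and $f(\I) = \id_V$ so that unit slots contribute identities in $\End_V$. The factor $1/n!$ in the symmetric brace sum absorbs the over-counting of which $n$ of the $k$ available slots receive a non-trivial grafting, so that summing over $n$ and repackaging $\bar g + 1 = g$ at every slot yields
$$(f\cc g)(\mu) \;=\; \sum\, \gamma\bigl(f(\mu_{(0)});\; g(\nu_1),\ldots, g(\nu_k)\bigr) \;=\; \gamma\circ (f\circ g)\circ \Delta(\mu),$$
which is exactly the claimed formula.

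The main obstacle is the careful matching of symmetry factors: the iterated infinitesimal decomposition naturally produces ordered $(n+1)$-tuples acted upon by $\Sy_n$, whereas the cooperad coproduct takes values in $\P^{\ac}\circ\P^{\ac}$, where the $k$-tuple of top operations carries the symmetric-group action coming from the arity of the bottom operation. Reconciling these two symmetrizations — and checking that the unital hypotheses $f(\I) = g(\I) = \id_V$ exactly bridge the passage from an $n$-fold symmetric brace (where only $n$ slots are marked) to a summation over all slots of $\mu_{(0)}$ — is the only non-routine point; once this bookkeeping is in place, the identity is a direct rewrite.
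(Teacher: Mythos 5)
Your proposal is correct and follows essentially the same route as the paper's proof: both hinge on identifying, by induction on $n$ using the recursive definition of the braces and the coassociativity of $\Delta$, the $n$-fold symmetric brace $\{f;\bar g,\ldots,\bar g\}$ (up to the factor $\frac{1}{n!}$) with the component of the coproduct in which exactly $n$ slots carry elements of $\overline{\P}^{\ac}$, and both use $f(\I)=g(\I)=\id_V$ to fill the remaining slots with units. The only difference is cosmetic — you go from braces to coproduct components, while the paper decomposes $\Delta$ into its components $\Delta_{(k)}$ and matches each one to $\frac{1}{k!}\{\bar f;\bar g,\ldots,\bar g\}$.
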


\begin{proof}
We use the splitting $\P^{\ac}=\I\oplus \overline{\P}^{\ac}$ to write the components of the decomposition map $\Delta$ on $\overline{\P}^{\ac}$ as follows
$$\oPa \xrightarrow{\Delta} 
\oPa \circ \I \ \oplus \  \I \otimes \oPa \ \oplus\  
\bigoplus_{k\ge 1} 
\oPa \otimes (\underbrace{\oPa \otimes \cdots \otimes \oPa}_{k})  \ ,$$
where, in the notation $\oPa \otimes ({\oPa \otimes \cdots \otimes \oPa}) $, we have suppressed the superfluous identity elements $\I$ (and some symmetric group elements). 
Let us denote by 
$$\Delta_{(0)} : \oPa \xrightarrow{\Delta}  \oPa \circ \I \ \oplus \  \I \otimes \oPa  \quad \text{and by} \quad 
\Delta_{(k)} : \oPa \to  \oPa \otimes (\underbrace{\oPa \otimes \cdots \otimes \oPa}_{k}) $$
the components of the decomposition map $\Delta$. If we denote $f=1+\bar f$ and $g=1+\bar g$, then we get 
$$f\cc g = 1 +\bar f +\bar g + \bar f\star\bar g + \textstyle{ \frac12 }\left\{ \bar f; \bar g, \bar g\right\}
+ \textstyle{ \frac16 }\left\{ \bar f; \bar g, \bar g, \bar g\right\}+ \cdots\ .$$ 
First,  both formulae agree on $\I$ and give $1$ :
$$ \I  \xrightarrow{\Delta} \I \circ \I
  \xrightarrow{f\circ g}  \id_V\otimes\id_V
  \xrightarrow{\gamma}  \id_V \ .$$
Then, on $\oPa$, the component $\Delta_{(0)}$ of $\Delta$ gives $\bar f + \bar g$.
It remains to prove, by induction on $k\ge 1$,  that the composite 
$$\oPa
 \xrightarrow{\Delta_{(k)}} \oPa \otimes (\underbrace{\oPa \otimes \cdots \otimes \oPa}_{k}) 
  \xrightarrow{\bar f\otimes (\bar g\otimes \cdots \otimes \bar g)}  \End_V \otimes ( \End_V \otimes 
  \cdots \otimes \End_V ) 
  \xrightarrow{\gamma}  \End_V \ .$$
is equal to 
$$  { \frac{1}{k!} }\big\{ \bar f; \underbrace{\bar g, \ldots, \bar g}_{k}\big\}\ .$$
The map $\Delta_{(1)}$ gives $\bar f \star \bar g$ by definition. Suppose now the relation true up to $k$. The $k+1$ brace operations is defined by 
$$\frac{1}{(k+1)!}\big\{ \bar f; \underbrace{\bar g, \ldots, \bar g, \bar g}_{k+1}\big\}= 
\frac{1}{k+1}\left(
\big\{ \frac{1}{k!} \{\bar f; \underbrace{\bar g, \ldots, \bar g}_{k}\}; \bar g     \big\}-
\frac{1}{(k-1)!}\big\{  \{\bar f; \underbrace{\bar g, \ldots, \bar g}_{k-1}, \{\bar g ; \bar g\} \big\}
\right)\ .$$
By the induction assumption, the first term is obtained by applying $\Delta_{(1)}  : \oPa \to \oPa \otimes \oPa$ first and then $\Delta_{(k)}$ to the left-hand side $\oPa$. This produces the  two kinds of elements depending on whether the right-hand side $\oPa$ is directly attached to the root $\oPa$ or whether there is an element of $\oPa$ in between. The second term of the formula cancels precisely these latter elements. By the coassociativity of the decomposition map $\Delta$, the former elements are given by $\frac{1}{k+1}$ times the elements produced by the map 
$$ \oPa
\to \oPa \otimes (\underbrace{\oPa \otimes \cdots \otimes \oPa}_{k}\otimes \oPa) \ , $$
with one element emphasized on the right-hand side; it is therefore equal to the terms produced by $\Delta_{(k+1)}$, which concludes the proof. 
\end{proof}

\begin{notation}
We will still denote by $f\cc g$ the operation in the convolution pre-Lie algebra $\a_{\P,V}$ defined by the composite $\gamma(f \circ g)\Delta$, for any $f,g$, even when their first component is not the identity. This generalization of the product $\cc$ satisfies the same properties, like the associativity relation for instance. 
\end{notation}

Since the Maurer--Cartan elements of $\a_{\P, V}$ correspond to homotopy $\P$-algebra structures, the question is now how to give a homotopical interpretation of the gauge group and its action. Let $\alpha$ and $\beta$ be two homotopy $\P$-algebra structures on $V$, i.e. two Maurer--Cartan elements. One defines a more general notion of maps between two homotopy $\P$-algebras, called \emph{$\infty$-morphisms} and denoted $\alpha \rightsquigarrow \beta$, by  degree $0$ elements 
$f : \P^{\ac} \to \End_V$ satisfying the equation
$$f \star \at = \beta \cc f\ . $$ 
(Notice the discrepancy  of this formula: the $\star$-product lies on the left-hand side and the $\cc$-product lies on the right-hand side.) 
They can be composed using the formula $f \cc g$ and $\infty$-isomorphisms are the $\infty$-morphisms such that the first composite is invertible, i.e. $f_{(0)}(\I)\in \mathrm{GL}(V)$. When this first component is the identity map, we call them \emph{$\infty$-isotopies}. 
We refer the reader to \cite[Section~10.2]{LodayVallette12} for more details.  

\begin{theorem}\label{thm:DeligneGroupoidII}
For any Koszul operad $\P$ and for any chain complex $(V, d)$, the group of $\infty$-isotopies is isomorphic to the gauge group under the preLie exponential map
$$ \boxed{\Gamma=(g_0, \BCH, 0)\cong (\infty\textsf{-}\mathsf{iso}, \cc, \id_V)}  \ . $$ 
The Deligne groupoid  is isomorphic to the groupoid whose objects are homotopy $\P$-algebras and whose 
morphisms are  $\infty$-isotopies
$$\boxed{\mathsf{Deligne}(\g_{\P,V})=\left(\mathrm{MC}(\g_{\P,V}), \Gamma
\right)\cong
(\P_\infty\textsf{-}\mathsf{Alg}, \infty\textsf{-}\mathsf{iso})}
\ .$$
\end{theorem}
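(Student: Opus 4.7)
The plan is to reduce the statement to the general pre-Lie deformation theory of Section~\ref{sec:DefPreLie} applied to the convolution pre-Lie algebra $\a_{\P,V}$, with the homotopical vocabulary providing a translation key. For the first assertion, I would first observe that the set of group-like elements $\mathrm{G}\subset\a_{\P,V}$ is by definition the set of degree $0$ maps $f:\P^{\ac}\to\End_V$ satisfying $f(\I)=\id_V$, which is precisely the underlying datum of an $\infty$-isotopy. By Lemma~\ref{lem:circledcirc}, the circle product $f\cc g$ in $\a_{\P,V}$ coincides with the operadic composition $\gamma(f\circ g)\Delta$ that defines the composite of two $\infty$-morphisms in \cite[Section~$10.2$]{LodayVallette12}. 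Theorem~\ref{thm:MainIso} then identifies the gauge group $\Gamma$ with $(\mathrm{G},\cc,1)$, which under this translation is exactly $(\infty\mathsf{-iso},\cc,\id_V)$.

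For the groupoid statement, I would invoke the standard Koszul duality result \cite[Chapter~$10$]{LodayVallette12} to identify objects: the Maurer--Cartan variety $\mathrm{MC}(\g_{\P,V})$ coincides with the set of $\P_\infty$-algebra structures on $V$. For morphisms, given two Maurer--Cartan elements $\at$ and $\beta$, Proposition~\ref{prop:GaugeActionPreLie} rewrites the gauge relation $\beta=\lambda.\at$ as $\beta=(e^\lambda\star\at)\cc e^{-\lambda}$. Composing both sides on the right with $e^\lambda$ under $\cc$, and using that $e^{-\lambda}$ is the $\cc$-inverse of $e^\lambda$ (Corollary~\ref{cor:AssocUnit}) together with associativity of $\cc$, this becomes
$$\beta\cc e^\lambda = e^\lambda\star\at,$$
which is precisely the $\infty$-morphism equation $f\star\at=\beta\cc f$ for $f:=e^\lambda$; since $e^\lambda$ is group-like (Lemma~\ref{lemma:ExponentialII}), it is an $\infty$-isotopy. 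Conversely, any $\infty$-isotopy $f$ is group-like, hence uniquely of the form $f=e^\lambda$ for some $\lambda\in g_0$ by Lemma~\ref{lemma:ExponentialII}, and reversing the above steps exhibits $\lambda$ as a gauge element witnessing $\beta=\lambda.\at$.

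The main obstacle to formalize is the manipulation of the generalized circle product in the key step $\beta=(e^\lambda\star\at)\cc e^{-\lambda}\ \Leftrightarrow\ \beta\cc e^\lambda=e^\lambda\star\at$. Here the first factor $e^\lambda\star\at$ is not group-like, since $\at$ is a Maurer--Cartan element rather than of the form $1+\cdots$, so neither associativity of $\cc$ nor the inverse formula of Corollary~\ref{cor:AssocUnit} applies verbatim. One must therefore rely on the extension of these properties to the generalized product announced in the notation following Lemma~\ref{lem:circledcirc}, which I would verify by rewriting both sides as composites with the decomposition map $\Delta$ of $\P^{\ac}$ and appealing to the coassociativity of $\Delta$. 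Once this extended associativity is in hand, the two implications are immediate, and the theorem follows.
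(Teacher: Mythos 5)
Your proposal is correct and follows essentially the same route as the paper: the first assertion via Theorem~\ref{thm:MainIso} and Lemma~\ref{lem:circledcirc}, and the second by using Proposition~\ref{prop:GaugeActionPreLie} to rewrite $\beta=(e^\lambda\star\at)\cc e^{-\lambda}$ as $e^\lambda\star\at=\beta\cc e^\lambda$ and concluding with Lemma~\ref{lemma:ExponentialII}. The subtlety you flag about associativity of the generalized circle product when the left factor is not group-like is exactly what the paper delegates to the Notation remark following Lemma~\ref{lem:circledcirc}, and your proposed verification via coassociativity of $\Delta$ is the intended justification.
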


\begin{proof} 
The first assertion follows directly Theorem~\ref{thm:MainIso} and Lemma~\ref{lem:circledcirc}.
The second assertion is given by   Proposition~\ref{prop:GaugeActionPreLie} as follows.
Two Maurer--Cartan elements $\at$ and ${\beta}$ are gauge equivalent if and only if there is 
an element $\lambda \in 
\Hom(\overline{\P}^{\ac}, \End(V))_0$ such that 
$${\beta}=e^{\mathrm{ad}_\lambda}(\at)=(e^{\lambda}\star \at) \cc e^{-\lambda}\ .$$
By considering the circle product by $e^\lam$ on the right, it is equivalent to 
$$e^\lam\star \at = {\beta} \circledcirc e^\lam\ , $$ 
which means that $e^\lam$ is an $\infty$-isotopy between $\at$ and ${\beta}$. One concludes with  
the bijective property of the pre-Lie exponential map of Lemma~\ref{lemma:ExponentialII}. 
\end{proof}

\begin{remarks}\leavevmode
\begin{itemize}
\item[$\diamond$]
This gauge equivalence first appeared in the paper \cite{Kadeishvili88} of T. Kadeshvili for the Koszul operad 
 encoding associative algebras.
 
\item[$\diamond$] The results of Theorem~\ref{thm:DeligneGroupoidII} give a conceptual framework to state in full generality the theory of Koszul hierarchy developed by M. Markl in \cite{Markl13, Markl14} for the operads $As$ and $Lie$. Let $\P$ be a Koszul operad, let $\mu : \P^!\to \End_A$ be an algebra structure over the Koszul dual operad, and let $\Delta$ be an degree $1$ square-zero operator on $A$. 
We work here with the cohomological degree convention and $s$ denotes an element of degree $1$.
Any basis of the Koszul dual operad $\P^!$ gives rise to a gauge group element defined by the composite
$$\Phi : \P^{\ac} \cong{\P^!}^*\otimes \End_{\KK s}\to {\P^!}\otimes \End_{\KK s}  \xrightarrow{\mu\otimes \id}
\End_A \otimes \End_{\KK s}\cong \End_{sA} \ .$$
Therefore, the gauge action of $\Phi$ on the trivial $\P_\infty$-algebra structure $\alpha_{\textrm{tr}}=(sA, \Delta)$ produces a $\P_\infty$-algebra structure $(\Phi^{-1}\star \alpha_{\textrm{tr}})\cc \Phi$ on the suspended chain complex $sA$. This construction answers the question raised in Section~$2.6$ of \cite{Markl13}. For the operad $\P=Lie$ of Lie algebras, its Koszul dual is the operad $\P^!=Com$ of commutative algebras. Using the canonical basis of this one-dimensional $\Sy$-module in any arity, one recovers the Koszul hierarchy \cite{Koszul85} of a commutative algebra, which forms an $L_\infty$-algebra, cf. \cite[Example~$2.8$]{Markl13}. 
In the same way, the nonsymmetric operad $As$ of associative algebras is Koszul auto-dual $As^!=As$ and one-dimensional in each arity.  Using its canonical basis, one recovers the B\"orjeson hierarchy \cite{Borjeson13} of an associative algebra, which forms an $A_\infty$-algebra, cf. \cite[Example~$2.4$]{Markl13}. Our results also explain why these new structures are homotopy trivial, that is they produce trivial structures on the cohomology level $H^\bullet(sA, \Delta)$, see Theorem~\ref{thm:GaugeTriv}.

\end{itemize}
\end{remarks}

Using this theorem, we can give yet another homotopy description of the Deligne groupoid. By definition, a  homotopy $\P$-algebra structure on a chain complex $(V,d)$ is a morphism of dg operads 
$$\P_\infty=\Omega \P^{\ac} \to \End_V\ .$$
Since, the source is cofibrant and the target is fibrant in the model category of dg operads \cite{Hinich97}, then  the homotopy relation is an equivalence relation, denoted $\sim_h$, which can be equivalently realized by any path or cylinder object. 

\begin{corollary}
The Deligne groupoid is isomorphic to the groupoid 
$$\boxed{\mathsf{Deligne}(\g_{\P, V})\cong 
\left(\mathrm{Hom}_{\mathsf{dg\ Op}}(\Omega \P^{\ac}, \End_V), \sim_h\right)}
\ ,$$
whose objects are morphisms of dg operads and whose morphisms are the homotopy equivalences. 
\end{corollary}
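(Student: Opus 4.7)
The plan is to split the claimed isomorphism of groupoids into a matching of objects and a matching of morphism relations, and then to combine Theorem~\ref{thm:DeligneGroupoidII} with a model-categorical comparison.

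For the objects, the universal property of the cobar construction $\Omega\P^{\ac}$ provides a natural bijection
$$\mathrm{MC}(\g_{\P, V}) \;\cong\; \mathrm{Hom}_{\mathsf{dg\ Op}}\big(\Omega \P^{\ac}, \End_V\big),$$
sending a Maurer--Cartan element $\alpha$ to the unique dg operad morphism $F_\alpha$ whose restriction to the generators $s^{-1}\overline{\P}^{\ac}$ of the cobar construction recovers $\alpha$; this is Proposition~$10.1.9$ of \cite{LodayVallette12} and is precisely the defining property of a $\P_\infty$-algebra structure on $V$.

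For the morphisms, Theorem~\ref{thm:DeligneGroupoidII} already identifies the gauge equivalence $\alpha \sim \beta$ on $\mathrm{MC}(\g_{\P, V})$ with the existence of an $\infty$-isotopy from $(V,\alpha)$ to $(V,\beta)$. It thus suffices to show that an $\infty$-isotopy between $\alpha$ and $\beta$ exists if and only if $F_\alpha \sim_h F_\beta$ in Hinich's model category of dg operads. Following the classical route, I would use as a path object $\mathrm{Path}(\End_V) := \End_V \otimes \Omega_{\mathrm{PL}}^\bullet([0,1])$, the tensor product of the endomorphism operad with the Sullivan polynomial de Rham algebra of the interval, whose fibrancy and two evaluation maps to $\End_V$ are standard. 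A right homotopy $H: \Omega\P^{\ac}\to \mathrm{Path}(\End_V)$ between $F_\alpha$ and $F_\beta$ unfolds, via the cobar-bar adjunction, into a one-parameter family $\alpha(t)$ of Maurer--Cartan elements of $\g_{\P,V}$ that interpolates $\alpha$ at $t=0$ and $\beta$ at $t=1$; by integrating the associated infinitesimal gauge through Proposition~\ref{prop:GaugeActionPreLie}, this family yields an $\infty$-isotopy $e^\lambda$ with $e^\lambda.\alpha = \beta$. Conversely, given an $\infty$-isotopy $e^\lambda$, the gauge path $t \mapsto e^{t\lambda}.\alpha$ provides such a right homotopy $H$.

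The hard part will be the explicit comparison between homotopies of dg operad morphisms into $\End_V$ and one-parameter families of gauge-equivalent Maurer--Cartan elements of $\g_{\P,V}$, together with the verification that the chosen path object has the correct universal property at the level of twisting morphisms. This technical ingredient has been developed in Chapter~$11$ of \cite{LodayVallette12} and in the work of Dolgushev; once it is in place, the matching of objects and the matching of morphism relations combine into the desired isomorphism of groupoids.
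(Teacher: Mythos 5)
Your proposal takes a genuinely different route from the paper, and in doing so it inherits a gap that the paper itself flags. The paper's proof is a two-line reduction: it invokes Theorem~\ref{thm:DeligneGroupoidII} together with the \emph{cylinder object} constructed by Fresse in \cite{Fresse09ter}, where it is already proven both that the construction is a genuine cylinder object in Hinich's model structure and that homotopies through it between morphisms $\Omega\P^{\ac}\to\End_V$ correspond exactly to $\infty$-isotopies. You instead propose the path object $\End_V\otimes\Omega^\bullet_{\mathrm{PL}}([0,1])$ and a comparison between right homotopies and one-parameter families of Maurer--Cartan elements. This is essentially the Dolgushev--Rogers approach, and the remark immediately following this corollary in the paper points out precisely why it is incomplete as stated: Dolgushev and Rogers ``consider a path object type construction of dg operads but without proving the model category property.'' Declaring the fibrancy and the path-object axioms for $\End_V\otimes\Omega^\bullet_{\mathrm{PL}}([0,1])$ to be ``standard'' is exactly the step that is missing from the literature you would be leaning on; without it, right homotopy through this object is not known to detect the model-categorical relation $\sim_h$.

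There is a second concrete gap in the morphism-level comparison. You invoke Proposition~\ref{prop:GaugeActionPreLie} to integrate the ``associated infinitesimal gauge'' of a one-parameter family $\alpha(t)$, but that proposition only integrates the flow of a \emph{constant} $\lambda\in g_0$; a general right homotopy produces a time-dependent generator $\lambda(t)$, and turning the resulting non-autonomous equation into a single gauge element (an ordered exponential landing in the group $\Gamma$) is the actual content of Dolgushev--Rogers' Theorem~5.6 and is not supplied by anything in this paper. Likewise, Chapter~11 of \cite{LodayVallette12} does not carry out this comparison at the level of operads. The object-level identification via the Rosetta Stone is fine, and the converse direction (an $\infty$-isotopy $e^\lambda$ yields the homotopy $t\mapsto e^{t\lambda}.\alpha$) is plausible, but to make your route rigorous you would either have to prove the path-object axioms and the time-dependent integration yourself, or do what the paper does and route everything through Fresse's cylinder object, where both issues are already settled.
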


\begin{proof}
This result follows in a straightforward way from Theorem~\ref{thm:DeligneGroupoidII} and the cylinder object given by B. Fresse in \cite{Fresse09ter}. In loc. cit., the author proves that two such morphisms of dg operads are homotopy equivalent with respect to his cylinder object  if and only if the two homotopy $\P$-algebras are related by an $\infty$-isotopy. 
\end{proof}

\begin{remarks}\leavevmode
\begin{itemize}
\item[$\diamond$] In \cite{DolgushevRogers12}, V. Dolgushev and C. Rogers almost proved this result: they consider a path object type construction of dg operads but without proving the model category property. Then, they prove that the associated ``homotopy equivalence" is equivalent to the gauge equivalence (Theorem~$5.6$).
\item[$\diamond$] Using other  methods (coming from derived algebraic geometry), S. Yalin recently and independently proved a higher (simplicial) version of this result in \cite{Yalin14}. 
\end{itemize}
\end{remarks}

Homotopy $\P$-algebra structures can be equivalently described by the following three bijective sets: 
$$\mathrm{Hom}_{\mathsf{dg\ Op}}(\Omega \P^{\ac}, \End_V) \cong \MC(\g_{\P, V})\cong 
\mathrm{Codiff}\left(\P^{\ac}(V)\right)
  \ , $$
which is called the Rosetta Stone in \cite{LodayVallette12}. The various results of  this section show that these bijections of sets can be lifted to isomorphisms of groupoids: 
$$\left(\mathrm{Hom}_{\mathsf{dg\ Op}}(\Omega \P^{\ac}, \End_V), \sim_h\right) \cong \mathsf{Deligne}(\g_{\P, V})\cong (\P_\infty\textsf{-}\mathsf{Alg}, \infty\textsf{-}\mathsf{iso})
  \ .$$

\section{Homotopy properties of algebras over an operad}\label{sec:HoDiagOP}

In this section, we use the diagrammatic method, due to Rezk \cite{Rezk96}, to describe the homotopical properties of algebras over an operad. The first part, which recalls the general notion of cofibrant replacement and a proof of the homotopy transfer theorem from \cite{BergerMoerdijk03, Fresse10ter}, is not new. The second part deals with trivial homotopy transfer structure. This gives rise to a new general notion in any category theory, dual to cofibrant replacement which we call \textit{homotopy trivialization}.  These two parts will be refined in the next sections, using the preLie deformation calculus. 

\subsection{Categorical operadic interpretation} Let $\P$ be an operad and let $f : V \to H $ be a morphism of chain complexes. We consider two  $\P$-algebra structures on $V$ and on $H$ respectively, $\alpha : \P \to \End_V$ and $\beta : \P \to \End_H$.
Recall that the map $f$ is a morphism of $\P$-algebras if and only if the following diagram commutes  in the category of $\Sy$-modules 
$$ \xymatrix{\P   \ar[r]^(0.44)\beta \ar[d]_(0.45){\alpha}      &     {\End_H\ \ } \ar[d]^(0.45){f^*}    \\
\End_V  \ar[r]^(0.44){f_*}     &    {\End^V_HÊ\  ,}}  $$
where $\End^V_H(n):=\Hom(V^{\otimes n}, H)$. 
This is equivalent to requiring that the maps $\alpha$ and $\beta$ factor through the pullback 
$$ \xymatrix{\P  \ar@{..>}[rd]   \ar@/^1pc/[rrd]^(0.44)\beta \ar@/_1pc/[ddr]_(0.45){\alpha}  && \\
& \End_V \times_f \End_H    \ar[r]  \ar[d] \ar@{}[rd] | (0.3)\pullback &     {\End_H\ \ } \ar[d]^(0.45){f^*}    \\
& \End_V  \ar[r]^(0.44){f_*}     &    {\End^V_HÊ\  ,}}  $$
where 
$\End_V\times_f \End_H =\{ (g, h)\in \End_V\times \End_H\ | \ f\circ g(n)= h(n)\circ f^{\otimes n}\}$.

\subsection{Homotopy transfer theorem} Suppose now that we are only given a $\P$-algebra structure $\alpha$ on $V$ and that 
$f : V \qi H $ is a quasi-isomorphism of chain complexes. One of the main application of this section is given when $H$ is equal to the homology $H(V)$ of $V$; in this case, $f$ is a cofibration, assumption that we will keep in this section (only). 

The homotopy transfer theorem (HTT) gives an answer to the following question: what kind of algebraic structure, related to $\P$, can be transferred from $V$ to $H$? 

\begin{definition}[Cofibrant replacement]
In any model category, a \emph{cofibrant replacement} $\P_\infty$ of an object  $\P$ is a factorisation of the initial map $\I \to \P$ into a cofibration followed by an acyclic fibration: 
$$\boxed{\xymatrix{\I\ \     \ar@{>->}[r]  &
\P_\infty    \ar@{->>}[r]^\sim           & 
\P    }}\ . $$
\end{definition}
So cofibrant replacements are unique up to homotopy. We consider here cofibrant replacements 
in the model category of dg operads of \cite{Hinich97}. For instance, when the operad $\P$ is Koszul, the Koszul resolution $\Omega \P^{\ac}$ is cofibrant. 

Cofibrant replacements give the kind of algebraic structure that one can homotopically transfer onto $H$. 

\begin{proposition}[\cite{BergerMoerdijk03}]\label{pro:HTTcat}
For any acyclic cofibration $f : H \stackrel{\sim}{\mono} V$ and for any $\P$-algebra structure $\alpha$ on $V$, there exists a homotopy equivalent $\P_\infty$-algebra structure $\hat{\alpha}$ on $V$ and a $\P_\infty$-algebra structure $\beta$ on $H$, such that the chain map $f : H \to V$ becomes a quasi-isomorphism of $\P_\infty$-algebras from  $(H, \beta)$ to $(V, \hat{\alpha})$. 
\end{proposition}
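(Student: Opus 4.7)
The plan is to recast the proposition as a lifting problem in Hinich's model category of dg operads, solved by the cofibrancy of the replacement $\P_\infty \twoheadrightarrow \P$. Concretely, the pullback diagram introduced above,
$$\End_V\times_f \End_H \to \End_V,\qquad \End_V\times_f \End_H \to \End_H,$$
carries a canonical dg operad structure (composition is inherited componentwise, and compatibility with the pullback uses only that $f$ is a chain map). A morphism of dg operads $\P_\infty \to \End_V\times_f \End_H$ is by construction the same data as a pair of $\P_\infty$-algebra structures, one on $V$ and one on $H$, strictly intertwined by $f$, so producing the desired pair amounts to constructing such a morphism.

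The key technical step is to show that the canonical projection $\End_V\times_f \End_H \twoheadrightarrow \End_V$ is an acyclic fibration of dg operads. This reduces to the arity-wise statement that $f^*:\End_H \to \End^V_H$, defined by precomposition with $f^{\otimes n}$, is an acyclic fibration of dg $\Sy$-modules: in characteristic zero, $f^{\otimes n}$ is an acyclic cofibration of $\Sy_n$-complexes, the internal hom $\Hom(-, H)$ turns it into an acyclic fibration, and base change preserves acyclic fibrations. One then upgrades this $\Sy$-module statement to dg operads using Hinich's free-forgetful adjunction.

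With this in hand, I form the composite $\alpha\circ q : \P_\infty \twoheadrightarrow \P \xrightarrow{\alpha} \End_V$, where $q$ is the cofibrant replacement map. Cofibrancy of $\P_\infty$ against the acyclic fibration $\End_V\times_f \End_H \twoheadrightarrow \End_V$ produces a lift $\tilde\alpha : \P_\infty \to \End_V\times_f \End_H$. Post-composing with the two projections yields a $\P_\infty$-algebra structure $\hat\alpha$ on $V$ and a $\P_\infty$-algebra structure $\beta$ on $H$; by construction of the pullback, $f$ is automatically a strict $\P_\infty$-morphism $(H,\beta)\to(V,\hat\alpha)$, and it remains a quasi-isomorphism since it already was one on underlying chain complexes. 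Finally, both $\alpha\circ q$ and $\hat\alpha$ are lifts of $\alpha\circ q$ against the same acyclic fibration, so they are homotopic in the model category of dg operads, which is the asserted homotopy equivalence of $\P_\infty$-structures on $V$.

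The hardest part is the upgrade of the acyclic-fibration statement from dg $\Sy$-modules to dg operads: pointwise one has arity-wise surjective quasi-isomorphisms, but one must verify the right lifting property against the generating cofibrations of dg operads, which in Hinich's framework are built from free-operad functors applied to cofibrations of $\Sy$-modules. The characteristic-zero assumption enters precisely here, ensuring that tensor powers of an acyclic cofibration of chain complexes remain homotopically well-behaved as $\Sy_n$-equivariant maps, via the semisimplicity of $\KK[\Sy_n]$.
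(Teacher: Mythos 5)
There is a genuine gap at the central step. You claim that the projection $\End_V\times_f \End_H \to \End_V$ is an acyclic fibration and then solve a strict lifting problem against it. But with $f : H \stackrel{\sim}{\mono} V$ as in the statement, this projection is the base change of $f_* : \End_H \to \End^H_V$ (post-composition with the injection $f$), which is an acyclic \emph{cofibration} of $\Sy$-modules, not a fibration; the map whose base change is an acyclic fibration is $f^* : \End_V \to \End^H_V$ (pre-composition with $f^{\otimes n}$), and that base change is the \emph{other} projection, onto $\End_H$. Concretely, $\End_V\times_f\End_H \to \End_V$ is not even surjective: a pair $(g,h)$ with prescribed $g\in\End_V(n)$ exists only when $g\bigl(f(H)^{\otimes n}\bigr)\subset f(H)$, which fails already in arity $1$ for $V\cong H\oplus K$ with $K$ acyclic and $g$ mapping $H$ into $K$. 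Since fibrations of dg operads are arity-wise surjections, your key technical claim is false, and the strict lift $\tilde\alpha$ with $g\tilde\alpha=\alpha\circ q$ need not exist. (Your reduction also silently switches the direction of $f$, writing $f^*:\End_H\to\End^V_H$ as in the preliminary diagram where $f:V\to H$, rather than the situation of the proposition.)

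The repair is exactly what the paper does: the projection $g:\End_V\times_f\End_H\to\End_V$ \emph{is} a weak equivalence (both projections are quasi-isomorphisms by a two-out-of-three argument around the pullback square, using that over a field $f^{\otimes n}$ splits), and all operads are fibrant, so for the cofibrant operad $\P_\infty$ one gets a bijection on \emph{homotopy classes} $[\P_\infty, \End_V\times_f\End_H]\cong[\P_\infty,\End_V]$. One then chooses $\theta$ with $g\theta$ homotopy equivalent to $\alpha\circ q$ and sets $\widetilde\alpha:=g\theta$, $\beta$ its other projection. Note that this weaker conclusion is precisely why the statement asserts only that $\widetilde\alpha$ is \emph{homotopy equivalent} to $\alpha$: if your strict lifting argument worked, $\widetilde\alpha$ would equal $\alpha\circ q$ on the nose, which is not what happens (and is the whole point of the perturbed structure in the Homotopy Transfer Theorem). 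The rest of your argument — the identification of maps to the pullback with strictly intertwined pairs of structures, and the fact that $f$ remains a quasi-isomorphism — is correct and agrees with the paper.
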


\begin{proof}
We reproduce here quickly  the main ingredient of the proof of \cite[Theorem 7.3 (2)]{Fresse10ter} since we will use it  in the sequel. It is based on the following diagram of operads
$$ \xymatrix@M=8pt{\P_\infty \ar[r]   \ar@{..>}[dr] \ar@{..>}[drr]^{\widetilde{\alpha}} \ar@{..>}@/_1pc/[ddr]_\beta & \P \ar@/^1pc/[rd]^\alpha & \\
 & \End_V \times_f \End_H    \ar[r]^(0.57){\sim}  \ar[d] \ar@{}[rd] | (0.3)\pullback &     {\End_V\ \ } \ar[d]^(0.45){f^*}    \\
& \End_H  \ar[r]^(0.44){f_*}     &    {\End^H_VÊ\  .}}  $$
Since the operad $\P_\infty$ is cofibrant, the 
map $g : \End_V \times_f \End_H \qi \End_V$, which is a weak equivalence between fibrant operads, induces 
a bijection between the homotopy classes of morphisms of operads $[\P_\infty, \End_V \times_f \End_H]\cong 
[\P_\infty, \End_V]$. So, we can  choose a morphism $\theta : \P_\infty \to \End_V \times_f \allowbreak \End_H$, such that $\widetilde{\alpha}:=g\, \theta$ is homotopy equivalent to $\alpha$. The composition of $\theta$ with the map 
$ \End_V \times_f \End_H \to \End_H$ gives the required morphism $\beta$. 
\end{proof}

In this case, the transferred $\P_\infty$-algebra structure $\beta$ on $H$ is related to the original one $\alpha$ on $V$ by a zig-zag of quasi-isomorphisms: 
$$
\xymatrix{
(H,\beta) \ar[r]^{\sim}_f& (V, \widetilde{\alpha}) & {\cdot} \ar[l]_(0.35){\sim} \ar[r]^(0.35){\sim}& (V, {\alpha}) \ .
} $$
When the operad $\P$ is Koszul, one can use  the cylinder object of \cite{Fresse09ter} to describe the homotopy equivalence of operads maps. This shows that the chain map $f$ extends to an $\infty$-quasi-isomorphism: 
$$\boxed{
\xymatrix@C=27pt{
(H,\beta) \ar@{~>}[r]^{\sim}_f& (V, {\alpha})} 
} \ .$$

\begin{definition}[Homotopy transferred structure]
For any quasi-isomorphism $f : H \qi V$, a \emph{homotopy transferred structure} is any $\P_\infty$-algebra structure on  $H$ for which there exists an extension of the chain map $f$ into an $\infty$-quasi-isomorphism. 
\end{definition}

\subsection{Homotopy trivialization} Suppose now that
a  homotopy transferred structure is trivial (strictly or in general up to homotopy, see Definition~\ref{def:hotriv}), what does it mean about the initial $\P$-algebra structure on $V$? Can one already read on that level some homotopy trivialization structure? 
To this extend, we now  define the general notion of \emph{homotopy trivialization}, which  is dual to the notion of cofibrant replacement. 

\begin{definition}[Homotopy trivialization]
In any model category, a \emph{homotopy trivialization} $\hoP$ of an object  $\P$ is a factorisation of the terminal map  $\P \to 0$ into a cofibration followed by an acyclic fibration: 
$$\boxed{\xymatrix{\P\ \     \ar@{>->}[r]  &
\hoP    \ar@{->>}[r]^(0.6)\sim           & 
0}}\ . $$
\end{definition}
 Homotopy trivializations are well defined up to homotopy and this notion commutes with the notion of cofibrant replacement up to homotopy. 
 
\begin{lemma}\label{lem:ResHoTriv}
In any model category, a cofibrant replacement of a homotopy trivialization is homotopy equivalent to a homotopy trivialization of a cofibrant replacement: 
$$(\hoP)_\infty\simeq \mathrm{ho}(\P_\infty) \ .$$
\end{lemma}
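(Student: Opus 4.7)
The plan is to construct an explicit weak equivalence $\psi:\mathrm{ho}(\P_\infty)\to (\hoP)_\infty$ by two successive uses of the lifting axiom, and then to upgrade it to a homotopy equivalence via Whitehead's theorem.

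Two preliminary observations are needed. Composing the cofibrations $\I\mono\P_\infty$ and $\P_\infty\mono\mathrm{ho}(\P_\infty)$ from the two definitions shows that $\mathrm{ho}(\P_\infty)$ is cofibrant. Dually, composing the acyclic fibrations $(\hoP)_\infty\epi\hoP$ and $\hoP\epi 0$ shows that the canonical arrow $(\hoP)_\infty\to 0$ is itself an acyclic fibration.

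Next, the lifting axiom applied to the cofibration $\I\mono\P_\infty$ against the acyclic fibration $(\hoP)_\infty\epi\hoP$, with the composite $\P_\infty\epi\P\mono\hoP$ as the bottom arrow, produces a map $\varphi:\P_\infty\to(\hoP)_\infty$ covering that composite. This $\varphi$ fits into the commutative square with left column the cofibration $\P_\infty\mono\mathrm{ho}(\P_\infty)$, right column the acyclic fibration $(\hoP)_\infty\epi 0$ constructed above, and bottom edge the unique map $\mathrm{ho}(\P_\infty)\to 0$. A second application of the lifting axiom supplies the desired arrow $\psi:\mathrm{ho}(\P_\infty)\to(\hoP)_\infty$. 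The two-out-of-three property applied to the commutative triangle formed by $\psi$ together with the two canonical weak equivalences $\mathrm{ho}(\P_\infty)\stackrel{\sim}{\to}0$ and $(\hoP)_\infty\stackrel{\sim}{\to}0$ then shows that $\psi$ is itself a weak equivalence between cofibrant objects.

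Finally, to upgrade $\psi$ to a homotopy equivalence, I invoke the classical form of Whitehead's theorem: a weak equivalence between cofibrant--fibrant objects admits a homotopy inverse. In the model category of dg operads of Hinich every object is fibrant, so this applies directly. I expect that the only subtle point is this last step, since it is sensitive to the ambient model structure; if not every object is fibrant one must first insert fibrant replacements, but the construction of $\psi$ itself uses only the lifting axiom and two-out-of-three, and is therefore robust.
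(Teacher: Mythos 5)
Your proof is correct, but it is organized differently from the paper's. The paper first produces the horizontal comparison map $\mathrm{ho}(\P_\infty)\to\hoP$ by invoking Lurie's rectification result \cite[Proposition~A.2.3.1]{Lurie09} to lift a square that commutes in the homotopy category to one that commutes strictly, observes that this map is a weak equivalence because both objects are weakly trivial, and only then performs a single elementary lifting of $\I\mono\mathrm{ho}(\P_\infty)$ against the acyclic fibration $(\hoP)_\infty\epi\hoP$. You instead build the vertical comparison first: a lift $\varphi:\P_\infty\to(\hoP)_\infty$ of $\I\mono\P_\infty$ against $(\hoP)_\infty\epi\hoP$, extended along the cofibration $\P_\infty\mono\mathrm{ho}(\P_\infty)$ using the composite acyclic fibration $(\hoP)_\infty\epi 0$. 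This replaces the appeal to Lurie's proposition by a second application of the bare lifting axiom, which makes your argument more elementary and genuinely valid ``in any model category'' as the statement claims; it also automatically records the compatibility of $\psi$ with the structure maps from $\P_\infty$ and $\P$, which is what Proposition~\ref{pro:HoTriv} actually needs. Your explicit final step (Whitehead's theorem for cofibrant--fibrant objects, using that every dg operad is fibrant in Hinich's model structure) is left implicit in the paper, which stops at ``weak equivalence''; making it explicit is a genuine improvement in precision, and your caveat about inserting fibrant replacements in a general model category is exactly the right one.
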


\begin{proof}
We apply \cite[Proposition~$A.2.3.1$]{Lurie09} to lift the following 
commutative diagram in the homotopy category 
$$\xymatrix@M=7pt{\P_\infty   \ar[r]^\cong  \ar[d] & \ar[d]    \P \\ 
\mathrm{ho}(\P_\infty) \cong 0 \ar@{..>}[r]^0 &\ 0\cong \hoP} $$
to the following commutative diagram in  the actual category
$$\xymatrix@M=7pt{\P_\infty   \ar@{->>}[r]^\sim  \ar@{>->}[d] & \ar@{>->}[d]    \P \\ 
\mathrm{ho}(\P_\infty) \ar@{..>}[r]^\exists &\  \hoP\ .} $$
Since $\mathrm{ho}(\P_\infty)$ and $\hoP$ are homotopy trivial, this map is a weak equivalence. Finally, we apply the lifting property to the following diagram 
$$\xymatrix@M=7pt{\I \ar@{>->}[r] \ar@{>->}[d]&   (\hoP)_\infty  \ar@{->>}[d]^\sim \\
\mathrm{ho}(\P_\infty) \ar[r]^(0.52)\sim    \ar@{..>}[ur]^\sim& \hoP}$$
to get a weak equivalence between $(\hoP)_\infty$ and $\mathrm{ho}(\P_\infty)$.
\end{proof}

We consider now homotopy trivializations in the model category of dg operads of \cite{Hinich97}.
In this case, homotopy trivializations $\hoP$ of $\P$ are retracts of  a coproduct of $\P$ with a 
quasi-free operad $\TTT(Y)$  generated by a free $\Sy$-modules $Y$ endowed with a \emph{triangulation}, that is a filtration satisfying the following conditions: 
$$\boxed{\P\vee\TTT(Y_0\oplus Y_1\oplus \cdots)\quad \text{with}\quad  d(Y_n)\subset \P\vee\TTT(Y_0\oplus Y_1\oplus \cdots \oplus Y_{n-1})} \ .$$
We shall only be considering coproducts with quasi-free resolutions. In this case, the first ``syzygies'' $Y_0$ are homotopy trivializations of the generators of $\P$, the second ones $Y_1$ are homotopies for these homotopies, etc. 
The differential $d_\P+d_\chi$ is made up of the unique derivation $d_\chi$ which extends a  gluing map 
$\chi : Y \to \P\vee \TTT(Y)$. So it is often denoted by 
$$\boxed{\hoP = \P\vee_\chi \TTT(Y) := \left( \P\vee\TTT(Y), d_\P+d_\chi\right)}\ . $$

In the Koszul case, one can consider the following two homotopy trivializations of the operad $\P$ and of the operad $\P_\infty$ respectively. 
In the first case, the underlying $\Sy$-module is given by $\P\vee \TTT(\oPa)$. For the \emph{direct} one, the gluing map $\chi : \oPa \to \P\vee \TTT(\oPa)$ is 
$$\oPa \xrightarrow{\Delta_{(1)}} \oPa \circ_{(1)} \oPa \xrightarrow{\id \circ_{(1)} \kappa}  \oPa \circ_{(1)} \P \mono \P\vee \TTT(\oPa)\ ,$$
and for the \emph{opposite} one, the gluing map is 
$$\oPa \xrightarrow{\Delta} \oPa \circ \oPa \xrightarrow{\kappa \circ \id}  \P \circ \oPa \mono \P\vee \TTT(\oPa)\ .$$
In the case of the operad $\P_\infty$, the underlying $\Sy$-module is given by 
$\P_\infty\vee \TTT(\oPa)\cong \TTT(s^{-1}\oPa \oplus \oPa)$. The \emph{direct} one is produced by the gluing map $\chi : \oPa \to \TTT(s^{-1}\oPa \oplus \oPa)$ equal to  
$$\oPa \xrightarrow{\Delta_{(1)}} \oPa \circ_{(1)} \oPa \xrightarrow{\id \circ_{(1)} s^{-1}}  \oPa \circ_{(1)} s^{-1}\oPa \mono \TTT(s^{-1}\oPa \oplus \oPa)\ ,$$
and  the \emph{opposite} one is produced by the gluing map  
$$\oPa \xrightarrow{\Delta} \oPa \circ \oPa \xrightarrow{s^{-1} \circ \id}  s^{-1}\oPa \circ \oPa \mono \TTT(s^{-1}\oPa \oplus \oPa)\ .$$

\begin{lemma}Let $\P$ be a Koszul operad.  The above mentioned direct and the opposite operads are homotopy trivializations of the operad $\P$ and $\P_\infty$ respectively. 
\end{lemma}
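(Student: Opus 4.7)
My plan is to verify, for each of the four candidate dg operads, the two defining conditions of a homotopy trivialization: that the structural inclusion of $\P$ (respectively $\P_\infty$) is a cofibration in $\mathsf{dg\,Op}$, and that the projection to the terminal operad is a weak equivalence, i.e.\ $\hoP$ is acyclic. A uniform triangulation argument handles the cofibrancy for all four cases. The weight grading $\oPa=\bigoplus_{n\geq 1}\oPa^{(n)}$ of the connected Koszul dual cooperad provides the filtration $Y_n:=\oPa^{(n+1)}$. In the direct case for $\P$, the infinitesimal decomposition splits $\Delta_{(1)}(y)$ into components in $\oPa^{(p)}\otimes \oPa^{(q)}$ with $p,q\geq 1$ and $p+q=n+1$, both of weight strictly less than $n+1$, so $\chi(y)=\sum y_{(1)}\circ_{(1)}\kappa(y_{(2)})$ lies in $\oPa^{(\leq n)}\circ_{(1)}\P$ and hence in $\P\vee\TTT(Y_0\oplus\cdots\oplus Y_{n-1})$. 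The opposite case uses $\Delta$ instead and decomposes $y$ into a root factor and upper factors all of weight $<n+1$; the same conclusion holds. The $\P_\infty$-variants reduce to the same argument, since $s^{-1}\oPa$ already sits inside $\P_\infty$.

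For acyclicity, I plan to run the spectral sequence associated with this weight filtration, indexed decreasingly by the total $\oPa$-weight carried by a tree. Since the internal differential ($d_\P$ or $d_{\P_\infty}$) preserves this weight while $d_\chi$ strictly lowers it, on $E^1$ only $d_\chi$ remains. A tree-by-tree analysis should then identify each summand with a finite tensor product of copies of the left Koszul complex $\P^{\ac}\circ_\kappa \P$ in the direct case, and of the right Koszul complex $\P\circ_\kappa \P^{\ac}$ in the opposite case. The Koszul assumption on $\P$ is precisely the statement that these complexes are acyclic, so a K\"unneth-type argument yields $E^2=0$. Since the filtration is bounded below and finite-dimensional in each total arity and degree, the spectral sequence converges, proving acyclicity.

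The main obstacle I anticipate is the explicit tree-by-tree identification of $E^1$ with a product of Koszul complexes. The free operad $\TTT(\oPa)$ produces arbitrary trees labelled by $\oPa$-vertices, whereas the Koszul complex is defined via a composition product; matching the two requires combinatorics that differ between the direct case (where $\Delta_{(1)}$ decomposes trees along a single edge) and the opposite case (where $\Delta$ decomposes along the entire upper layer above a chosen vertex). The $\P_\infty$-variants add a further bookkeeping layer, since the cobar differential on $s^{-1}\oPa$ must interact compatibly with $\chi$ and with the weight filtration; once this is clean the Koszul duality of $\P$ does the work and acyclicity follows.
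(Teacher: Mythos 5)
Your overall strategy is the paper's: exhibit the triangulation making the structural map a cofibration, observe the map to $0$ is a fibration, and reduce acyclicity to Koszulity of $\P$ via the tree description of the coproduct $\P\vee\TTT(\oPa)$. Two remarks on where you diverge or fall short. First, the spectral sequence is an optional reorganization: for a homogeneous Koszul operad $d_\P=0$, so in the two $\P$-cases the total differential is just $d_\chi$ and the paper simply decomposes the reduced complex as a direct sum of subcomplexes, each a tensor of $\oPa$ with the augmentation part of $\P^{\ac}\circ_\kappa\P$ (resp.\ $\P\circ_\kappa\P^{\ac}$); in the two $\P_\infty$-cases the paper keeps the full differential $d_{\P_\infty}+d_\chi$ and identifies the pieces with tensors of the acyclic bar constructions $\P^{\ac}\circ_\iota\Omega\P^{\ac}$ and $\Omega\P^{\ac}\circ_\iota\P^{\ac}$, thereby avoiding your extra pass of first taking cobar homology on the $E^1$ page and then reducing to the $\P$-case (your route also works, but the interaction you worry about is precisely what the bar-construction identification packages away). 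Second, the step you flag as your ``main obstacle'' --- matching the trees of $\TTT(\oPa)$ interleaved with $\P$-layers against iterated composition products of the Koszul complex --- is exactly where the content of the proof sits, and you have not carried it out; the paper's resolution is the direct-sum-of-tensors statement above. You also omit the preliminary verification that the four candidates are dg operads at all, i.e.\ that $(d_\chi)^2=0$, respectively $(d_{\P_\infty}+d_\chi)^2=0$, on $\oPa$, which follows from the (co)associativity of $\Delta$ and $\Delta_{(1)}$ and should be recorded before any homological argument.
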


\begin{proof}
Each of these four constructions provides us with a dg operad: it is enough to prove that  $(d_\chi)^2$, respectively 
$(d_{\P_\infty}+d_\chi)^2$, vanishes on $\oPa$. The result is given by a straightforward computation and the coassociative properties of the (infinitesimal) decomposition coproduct of the Koszul dual cooperad $\oPa$. 

The embedding $\P \mono \P\vee_\chi \TTT(\oPa)$, respectively $\P_\infty \mono \P_\infty \vee_\chi \TTT(\oPa)$, is a cofibration by \cite[Lemma~$6.7.1$]{Hinich97}.
The trivial map $\P\vee_\chi \TTT(\oPa) \epi 0$, respectively $\P_\infty\vee_\chi \TTT(\oPa) \epi 0$ is a fibration since it is surjective. 

It remains to prove that it is a quasi-isomorphism. 
In the case of the direct candidate for the operad $\P$, the underlying operad of $\P\vee_\chi \TTT(\oPa)$ is made up of trees with vertices alternately labelled by elements of $\P$ and $\TTT(\oPa)$. The differential only applies non-trivially to the upper vertices of $\TTT(\oPa)$, splitting the labeling element of $\oPa$ into two and composing the obtained top element with the above element in $\P$.  
So as a chain complex, the component $\P\vee_\chi \TTT(\oPa)$ without the trivial part $\I$, is isomorphic to a  direct sum of chain sub-complexes, which are made up of tensors of $\oPa$ with the augmentation part of the Koszul complex $\P^{\ac} \circ_\kappa \P$.  Since the operad $\P$ is Koszul, the Koszul complex is acyclic, which  proves that the homology of $\P\vee_\chi \TTT(\oPa)$ is trivial. 
The same arguments hold mutatis mutandis for the three other cases, where one has just to replace the Koszul complex 
$\P^{\ac} \circ_\kappa \P$ by, respectively, the acyclic Koszul complex $\P \circ_\kappa \P^{\ac}$, and the acyclic bar constructions 
$\P^{\ac} \circ_\iota \Omega \P^{\ac}$ and $\Omega \P^{\ac} \circ_\iota \P^{\ac}$.
\end{proof}

One can see that an algebra over the direct, respectively the opposite, homotopy trivialization amounts to a $\P$-algebra or a $\P_\infty$-algebra structure together with an $\infty$-isotopy to, respectively from, the trivial structure; that is an algebra structure together with a gauge group element whose action produces the trivial structure. Notice that here, we have an isomorphism $(\hoP)_\infty\cong \mathrm{ho}(\P_\infty)$ and not only a quasi-isomorphism.

\begin{remark}
Such homotopy trivializations applied to Batalin--Vilkovisky algebra structures were proved to play a key role in the description of Givental action on Cohomological Field Theories in \cite{DotsenkoShadrinVallette13preprint}.
\end{remark}

\begin{definition}[Homotopy trivial $\P$-algebra structure]\label{def:hotriv}
A $\P$-algebra structure $\P\to \End_H$ is \emph{homotopy trivial} if the associate morphism in the homotopy category is trivial. 
\end{definition}

\begin{proposition}\label{pro:HoTriv}
For any acyclic cofibration $f : H \stackrel{\sim}{\mono} V$ and for any $\P$-algebra structure on $V$, the homotopy transferred structure on $H$ of Proposition~\ref{pro:HTTcat} is homotopy trivial if and only if the $\P$-algebra structure on $V$ extends to a $\hoP$-algebra structure. 
\end{proposition}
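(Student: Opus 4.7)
My plan is to rephrase both conditions in terms of vanishing of homotopy classes of dg operad morphisms, and then identify the two vanishing conditions using the pullback square from the HTT diagram of Proposition~\ref{pro:HTTcat}.

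First, I would observe that $\alpha$ extends to a $\hoP$-algebra structure on $V$ if and only if the class $[\alpha] \in [\P, \End_V]$ is trivial. Since $\hoP \twoheadrightarrow 0$ is a weak equivalence, $\hoP$ is homotopy terminal in the model category of dg operads, so any morphism $\hoP \to \End_V$ is homotopic to the unique morphism that factors through the trivial operad. Hence the image of the restriction map $[\hoP, \End_V] \to [\P, \End_V]$ consists only of the class of the trivial $\P$-algebra structure, which shows that an extension forces $[\alpha] = 0$. Conversely, the trivial $\P$-structure manifestly extends (to the trivial $\hoP$-structure on $V$), and by the standard fact that extensions along a cofibration are invariant under homotopy on the source, any $\alpha$ homotopic to the trivial map extends.

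Next, I would use the HTT diagram of Proposition~\ref{pro:HTTcat}. The transferred structure is $\beta = p_2 \theta$, where $\theta : \P_\infty \to \End_V \times_f \End_H$ is a lift of $\alpha : \P_\infty \to \End_V$ along the acyclic fibration $p_1$. The crucial additional point is that $p_2 : \End_V \times_f \End_H \to \End_H$ is also a weak equivalence: by construction $p_2$ is the base change along $f_* : \End_H \to \End_V^H$ of the map $f^* : \End_V \to \End_V^H$ given by restriction along $f^{\otimes n}$, and since $f$ is a quasi-isomorphism of chain complexes over a field of characteristic zero, $f^*$ is a quasi-isomorphism in each arity; right properness of Hinich's model structure on dg operads \cite{Hinich97} then transfers this to $p_2$. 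As every dg operad is fibrant and $\P_\infty$ is cofibrant, the maps $p_{1*}$ and $p_{2*}$ induced on $[\P_\infty, -]$ are bijections. Under these bijections the classes $[\alpha]$, $[\theta]$, and $[\beta]$ all correspond; together with the bijection $[\P, \End_V] \cong [\P_\infty, \End_V]$ coming from $\P_\infty \twoheadrightarrow \P$, this yields the equivalence $[\alpha] = 0 \iff [\beta] = 0$, i.e.\ $\alpha$ is homotopy trivial in $[\P, \End_V]$ if and only if $\beta$ is homotopy trivial in $[\P_\infty, \End_H]$.

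Combining the two paragraphs proves the proposition. The main obstacle I anticipate is the verification that $p_2$ is a weak equivalence: this rests on right properness of the Hinich model structure together with the fact that $f^*$ is a quasi-isomorphism of $\Sy$-modules, a direct but non-formal check that one should spell out in each arity. The rest is a straightforward repackaging in the homotopy category, and Lemma~\ref{lem:ResHoTriv} ensures this model-categorical reformulation is compatible with the more concrete description of $\hoP$- and $\hoP_\infty$-algebras as algebras equipped with an $\infty$-isotopy to the trivial structure.
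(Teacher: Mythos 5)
Your argument is correct in substance but takes a genuinely different, and in its middle part more economical, route than the paper's. The paper proves the two implications separately: for $(\Rightarrow)$ it lifts $\theta$ through the cofibration $\P_\infty \mono \mathrm{ho}(\P_\infty)$, transports the factorization to $\hoP$ via Lemma~\ref{lem:ResHoTriv}, and then strictifies; for $(\Leftarrow)$ it pulls the $\hoP$-structure back to a $\mathrm{ho}(\P_\infty)$-structure and pushes it through the pullback of Proposition~\ref{pro:HTTcat}. You instead collapse everything into a single chain of equivalences of vanishing conditions in the homotopy category, the key extra observation being that the projection $p_2 : \End_V \times_f \End_H \to \End_H$ is also a weak equivalence, not just the projection to $\End_V$ that the paper uses. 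That observation is right, and you can in fact get it more cheaply than by right properness: $p_2$ is the base change of $f^* : \End_V \to \End^H_V$, which over a field is an arity-wise surjective quasi-isomorphism, so $p_2$ is an acyclic fibration by the closure of acyclic fibrations under pullback (with the caveat, relevant to both your argument and the paper's, that the square lives in $\Sy$-modules rather than operads since $\End^H_V$ carries no operad structure; one concludes for operads because their weak equivalences are detected arity-wise). What this buys you is a symmetric treatment of $[\alpha]$ and $[\beta]$, with $\hoP$ entering only as a representing object for the vanishing of $[\alpha]$, rather than being threaded through both directions of the argument.

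The one step you should firm up is the implication ``$[\alpha]=0$ in the homotopy category implies $\alpha$ extends along $\P \mono \hoP$.'' Your homotopy-extension argument is sound once you have an actual right homotopy from $\alpha$ to the trivial map $\P \to \End_V$ (this indeed needs no cofibrancy of the source, only fibrancy of $\End_V$). But a Koszul operad $\P$ is in general not cofibrant as a dg operad, so triviality of the class of $\alpha$ in the homotopy category does not immediately yield such a genuine homotopy; ``homotopic to the trivial map'' and ``trivial in $[\P,\End_V]$'' only coincide for cofibrant sources. This is precisely the point the paper delegates to \cite[Proposition~$A.2.3.1$]{Lurie09}, so you are at the same level of rigor as the printed proof, but in your streamlined setup the cleanest patch is to first extend the induced $\P_\infty$-structure along $\P_\infty \mono \mathrm{ho}(\P_\infty)$, where cofibrancy of the source makes the two notions of triviality agree, and then descend to $\hoP$ using Lemma~\ref{lem:ResHoTriv}.
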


\begin{proof}\leavevmode
\begin{itemize}
\item[$(\Longrightarrow)$] If the map $\beta$ is homotopy trivial, applying \cite[Proposition~$A.2.3.1$]{Lurie09} to the cofibration $\P_\infty \mono \allowbreak \mathrm{ho}(\P_\infty)$ shows that the map $\P_\infty \to \End_V \times_f \End_H$ factors through $\mathrm{ho}(\P_\infty)$. 
Lemma~\ref{lem:ResHoTriv} shows that the following diagram in the homotopy category is commutative 
$$\xymatrix@M=7pt{
\P_\infty \ar@{<->}[rr]^\cong  \ar@{>->}[d] & & \P  \ar@/^1pc/[rdd]^{[\alpha]} \ar@{>->}[d] & \\
\mathrm{ho}(\P_\infty) \ar@{<->}[r]^\cong     \ar@/_1pc/[rdrr] & (\hoP)_\infty \ar@{<->}[r]^(0.56)\cong & \hoP & \\
&&& \End_V\ .
}$$
Finally, since the map $\P \mono \hoP$ is a cofibration, we can apply again \cite[Proposition~$A.2.3.1$]{Lurie09} to prove the existence of a map $\hoP \to \End_V$ which factors the initial $\P$-algebra structure $\alpha$ on $V$. 

\item[$(\Longleftarrow)$] In the other way round, if the initial $\P$-algebra structure extends to a $\hoP$-algebra structure, this means that the structure map $\alpha$ factors through $\hoP$, that is $\alpha : \P \mono \hoP \to \End_V$. By pulling back with $\mathrm{ho}(\P_\infty) \to \hoP$, the space $V$ acquires a $\mathrm{ho}(\P_\infty)$-algebra structure. We can apply the arguments of the proof of Proposition~\ref{pro:HTTcat}, to produce a map $\mathrm{ho}(\P_\infty) \to \End_V \times_f \End_H$ whose composite with $\End_V \times_f \End_H \to \End_V$ is homotopy equivalent to the previous one. By the commutative diagrams of the proof of Lemma~\ref{lem:ResHoTriv}, this produces a required homotopy equivalent $\P_\infty$-algebra structure  $\widetilde{\alpha}$. Therefore, the induced transferred $\P_\infty$-algebra structure on $H$ factors through $\mathrm{ho}(\P_\infty)$. Since the operad $\mathrm{ho}(\P_\infty)$ is homotopy trivial,  so is this $\P_\infty$-algebra structure.
\end{itemize}
\end{proof}

In the case where $H$ is the homology of the chain complex V with a choice of lifting of the homology classes $H \mono V$, this proposition shows, together with the above mentioned homotopy trivializations of $\P_\infty$, that the transferred structure on $H$ of Proposition~\ref{pro:HTTcat} is homotopy trivial if and only if the original structure on $V$ is gauge trivial. We will generalize this result in the next section and prove that it actually does not depend on either a choice of a lifting or  a transferred structure. 

\section{Homotopy trivialization}\label{sec:HoTriv}

Using the pre-Lie deformation theory, we show the equivalence between gauge trivial  and homotopy trivial $\P_\infty$-algebra structures. In this section, we work with a Koszul operad  $\P$. 

\begin{lemma}\label{lem:HoTrivEA}
Let $\alpha$ be a $\P_\infty$-algebra structure on a chain complex $(V,d)$. 
The following assertions are equivalent. 

\begin{enumerate}
\item There exist a deformation retract onto the homology $H(V)$ for which a homotopy transferred structure is trivial.

\item Every homotopy transferred structure onto the homology $H(V)$ is trivial. 

\item The $\P_\infty$-algebra structure $\alpha$ on $V$ is homotopy trivial. 
\end{enumerate}
\end{lemma}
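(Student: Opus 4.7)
The plan is to prove $(2) \Rightarrow (1) \Rightarrow (3) \Rightarrow (2)$, with the first implication immediate from the definitions. For the other two implications, I will adapt the proof of Proposition~\ref{pro:HoTriv} from the pair $(\P,\hoP)$ to the pair $(\P_\infty, \mathrm{ho}(\P_\infty))$, using the explicit quasi-free models of $\mathrm{ho}(\P_\infty)$ built in Section~\ref{sec:HoDiagOP}. The two key structural inputs are that $\P_\infty \mono \mathrm{ho}(\P_\infty)$ is a cofibration in the model category of dg operads and that $\mathrm{ho}(\P_\infty)$ is homotopy trivial. Throughout, I will freely use the identification, coming from Theorem~\ref{thm:DeligneGroupoidII} and its corollary, between the homotopy category of $\P_\infty$-structures on $V$ and the Deligne groupoid of $\g_{\P,V}$.

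For $(1) \Rightarrow (3)$, fix a deformation retract $f\colon H(V) \mono V$ together with a homotopy trivial homotopy transferred structure $\beta$ on $H(V)$. The diagrammatic setup of Proposition~\ref{pro:HTTcat} packages $\alpha$ and $\beta$ into a morphism $\theta\colon \P_\infty \to \End_V \times_f \End_{H(V)}$ whose composite with the projection to $\End_{H(V)}$ is homotopic to $\beta$, hence to the trivial operad morphism. The lifting argument of Proposition~\ref{pro:HoTriv}, applied to the cofibration $\P_\infty \mono \mathrm{ho}(\P_\infty)$, then extends $\theta$ to a morphism $\mathrm{ho}(\P_\infty) \to \End_V \times_f \End_{H(V)}$; projecting to $\End_V$ produces a $\mathrm{ho}(\P_\infty)$-structure on $V$ whose restriction along $\P_\infty \mono \mathrm{ho}(\P_\infty)$ is $\alpha$. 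Since $\mathrm{ho}(\P_\infty) \simeq 0$ in the homotopy category of dg operads, this forces $\alpha$ to be homotopy trivial in the sense of Definition~\ref{def:hotriv}.

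For $(3) \Rightarrow (2)$, suppose $\alpha$ is homotopy trivial. Then $\alpha$ is homotopic to the zero operad morphism, which factors through the acyclic fibration $\mathrm{ho}(\P_\infty) \epi 0$; lifting against the cofibration $\P_\infty \mono \mathrm{ho}(\P_\infty)$ yields a strict extension of $\alpha$ to a $\mathrm{ho}(\P_\infty)$-structure on $V$. For an \emph{arbitrary} deformation retract onto $H(V)$, the diagram of Proposition~\ref{pro:HTTcat} then produces a homotopy transferred structure that, by construction, factors through $\mathrm{ho}(\P_\infty)$ and is therefore homotopy trivial. The main obstacle I anticipate is that the notion of ``homotopy transferred structure'' is only well-defined up to $\infty$-isotopy, so one must argue that triviality in (1) for one choice of transfer already implies triviality for all choices in (2); this is precisely where Theorem~\ref{thm:DeligneGroupoidII} enters, since it identifies $\infty$-isotopy equivalence classes with gauge equivalence classes, making homotopy triviality an intrinsic, choice-independent property of $\alpha$.
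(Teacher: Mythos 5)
Your strategy---running the model-categorical lifting argument of Proposition~\ref{pro:HoTriv} for the pair $(\P_\infty,\mathrm{ho}(\P_\infty))$---is genuinely different from the paper's proof, which instead combines the minimal model theorem with an explicit weight-graded computation in the convolution pre-Lie algebra. Unfortunately your cycle does not close: the implication $(3)\Rightarrow(2)$ has a real gap. What your argument produces is, for each deformation retract, \emph{one} transferred structure (the output of the pullback construction of Proposition~\ref{pro:HTTcat}) that factors through $\mathrm{ho}(\P_\infty)$ and is therefore \emph{homotopy} trivial. Statement (2) is strictly stronger on two counts: it quantifies over \emph{every} homotopy transferred structure, i.e.\ any $\P_\infty$-structure $\beta$ on $H(V)$ admitting an $\infty$-quasi-isomorphism extending the chain map, not just the one your construction outputs; and ``trivial'' there means $\beta=0$ as a Maurer--Cartan element, not merely null-homotopic. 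Your closing appeal to Theorem~\ref{thm:DeligneGroupoidII} does not repair this: identifying gauge equivalence with $\infty$-isotopy does not give you (a) that two arbitrary transferred structures on $H(V)$ are $\infty$-\emph{isomorphic}---for that one needs homotopy inverses of $\infty$-quasi-isomorphisms, \cite[Theorem~10.4.4]{LodayVallette12}---nor (b) that a structure on the zero-differential complex $H(V)$ which is $\infty$-isomorphic to the trivial one must itself vanish. Point (b) is exactly the paper's $(1)\Rightarrow(2)$ step: writing the $\infty$-morphism equation $f\star\beta=0$ in the weight decomposition and using the invertibility of $f_{(0)}(\I)$ kills $\beta$ weight by weight. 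Without this induction the ``uniformity'' content of the lemma---precisely what distinguishes it from the earlier Proposition~\ref{pro:HoTriv}---is not established.

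A secondary issue sits in your $(1)\Rightarrow(3)$: a trivial transferred structure in the sense of hypothesis (1) comes equipped with an $\infty$-quasi-isomorphism extending $f$, not with a strict morphism of $\P_\infty$-algebras, so the pair $(\alpha,0)$ does not directly define a map $\theta\colon\P_\infty\to\End_V\times_f\End_{H(V)}$ to the strict pullback; you would first have to replace $\alpha$ by a homotopic structure for which $f$ becomes strict, and justify that this does not affect the conclusion. The paper sidesteps all of this by quoting the minimal model theorem \cite[Theorem~10.4.3]{LodayVallette12}: a trivial transferred structure yields an $\infty$-isomorphism from $(V,d,\alpha)$ to a trivial structure on a complex isomorphic to $(V,d)$, whence $\alpha$ is $\infty$-isotopic to the trivial structure, and Fresse's cylinder object converts this into homotopy triviality of the operad morphism. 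I would recommend keeping your lifting argument only as an alternative for $(1)\Rightarrow(3)$ (after fixing the strictness point) and adding the paper's $\infty$-isomorphism-plus-induction argument to obtain $(2)$ in full strength.
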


\begin{proof}\leavevmode
\begin{itemize}
\item[$(1)\Rightarrow(2)$]
Assumption $(1)$ provides us with a  trivial transferred structure
$$(H(V), 0, 0) \stackrel{\sim}{\rightsquigarrow} (V, d, \alpha) \ . $$
Let 
$$(H(V), 0, \beta) \stackrel{\sim}{\rightsquigarrow} (V, d, \alpha) \ . $$
be any other transferred structure. Since any $\infty$-quasi-isomorphism admits a homotopy inverse \cite[Theorem~$10.4.4.$]{LodayVallette12}, there exists an $\infty$-isomorphism 
$$f : (H(V), 0, 0) \stackrel{\cong}{\rightsquigarrow} (H(V), 0, \beta) \ . $$
Using the weight grading decomposition, the equation satisfied by the $\infty$-isotopy $f$ becomes 
\begin{eqnarray*}
f\star \beta &=& (f_{(0)} + f_{(1)} + f_{(2)} + \cdots) \star (\beta_{(1)}+\beta_{(2)}+\cdots ) \\
&=& \underbrace{f_{(0)}\star\beta_{(1)}}_{(1)} + 
\underbrace{f_{(0)}\star\beta_{(2)}  +f_{(1)} \star  \beta_{(1)}}_{(2)}
+\underbrace{f_{(0)}\star\beta_{(3)}  +f_{(1)} \star  \beta_{(2)}  +f_{(2)} \star  \beta_{(1)}  }_{(2)}+ \cdots \\
&=& 0
\ .
\end{eqnarray*}
Using the fact that $f_{(0)}(\I)\in \mathrm{GL}(H(V))$, one can see that $\beta=0$, by induction. 

\item[$(2)\Rightarrow(1)$] This is trivial.

\item[$(1)\Rightarrow(3)$] In this case, the minimal model theorem \cite[Theorem~$10.4.3$]{LodayVallette12} states that the original $\P_\infty$-algebra structure $\alpha$ is $\infty$-isotopic to the trivial structure. This proves the homotopy triviality of the map $\alpha$ in the model category of operads by the cylinder construction of \cite{Fresse09ter}.

\item[$(3)\Rightarrow(2)$] If the map of operads $\alpha$ is homotopy trivial, then the $\P_\infty$-algebra $(V,0)$ is $\infty$-isotopic to $(V, \alpha)$. In this case, any quasi-isomorphism of chain complexes $H(V)\qi V$ is any $\infty$-quasi-isomorphism of (trivial) $\P_\infty$-algebras. The composite of the two maps concludes the proof
\end{itemize}
\end{proof}

In plain words, a $\P_\infty$-algebra structure is homotopy trivial if and only if it transfers uniformly, for any choice of quasi-isomorphism and any choice of transferred structure, to a trivial structure on its underlying homology. 

\begin{theorem}\label{thm:GaugeTriv}
A $\P_\infty$-algebra structure is homotopy trivial if and only if it is gauge trivial. 
\end{theorem}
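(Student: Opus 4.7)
The plan is to reduce the statement to Theorem~\ref{thm:DeligneGroupoidII}, which identifies the Deligne groupoid $\mathsf{Deligne}(\g_{\P,V})$ with the groupoid $(\P_\infty\text{-}\mathsf{Alg}, \infty\text{-}\mathsf{iso})$, together with the identification of $\infty$-isotopies as homotopies in the model category of dg operads (via Fresse's cylinder object, as used in the corollary following Theorem~\ref{thm:DeligneGroupoidII}).

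First, I unpack the meaning of \emph{gauge trivial}. By definition, $\alpha$ is gauge trivial if there exists $\lambda\in\Hom_\Sy(\overline{\P}^{\ac},\End_V)_0$ such that $\lambda.\alpha=0$. By Proposition~\ref{prop:GaugeActionPreLie}, this reads
\[
(e^\lambda\star\alpha)\cc e^{-\lambda}=0,
\]
which, after applying $-\cc e^\lambda$ on the right and using that $0\cc e^\lambda=0$, is equivalent to $e^\lambda\star\alpha=0\cc e^\lambda$. By Theorem~\ref{thm:DeligneGroupoidII}, this is exactly the statement that $e^\lambda$ is an $\infty$-isotopy from $(V,0)$ to $(V,\alpha)$. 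So gauge triviality of $\alpha$ is equivalent to the existence of an $\infty$-isotopy between the trivial structure and $\alpha$.

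Next, I unpack \emph{homotopy trivial}. By Definition~\ref{def:hotriv} this means that the structure map $\alpha:\P_\infty\to\End_V$ is equivalent to the zero map in the homotopy category of dg operads. By the corollary following Theorem~\ref{thm:DeligneGroupoidII}, homotopy equivalence of dg operad morphisms (via Fresse's cylinder \cite{Fresse09ter}) coincides with the $\infty$-isotopy relation on the associated $\P_\infty$-algebra structures. Hence $\alpha$ is homotopy trivial if and only if $(V,\alpha)$ is $\infty$-isotopic to $(V,0)$. Alternatively, one can invoke the equivalent reformulation $(3)\Leftrightarrow(1)$ of Lemma~\ref{lem:HoTrivEA}, which yields the same conclusion through the minimal model theorem.

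Combining these two reformulations, both "gauge trivial" and "homotopy trivial" become the same condition: the existence of an $\infty$-isotopy from $(V,0)$ to $(V,\alpha)$. This immediately gives the equivalence stated in the theorem. There is no real obstacle in this proof: essentially all the work has been done in Theorems~\ref{thm:MainIso} and \ref{thm:DeligneGroupoidII} and Proposition~\ref{prop:GaugeActionPreLie}, which provide the dictionary between the gauge-theoretic side (BCH group acting by conjugation in the convolution pre-Lie algebra) and the homotopical side ($\infty$-isotopies between $\P_\infty$-algebra structures). The only step that requires a moment of care is checking that $0\cc e^\lambda=0$ so that the $\cc$-version and the $\star$-version of the triviality equation match; this is immediate from the definition of $\cc$ via symmetric braces, since every brace with an entry $0$ on the left vanishes.
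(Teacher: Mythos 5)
Your proof is correct, but it takes a more direct route than the paper. The paper's proof never invokes the groupoid isomorphism with $(\mathrm{Hom}_{\mathsf{dg\ Op}}(\Omega\P^{\ac},\End_V),\sim_h)$ in the body of the argument; instead it works with the characterization of homotopy triviality via transferred structures on the homology $H(V)$ (Lemma~\ref{lem:HoTrivEA}(1)--(2)). For ``gauge $\Rightarrow$ homotopy trivial'' it composes the $\infty$-isotopy $(V,d,0)\rightsquigarrow(V,d,\alpha)$ given by Theorem~\ref{thm:DeligneGroupoidII} with the $\infty$-quasi-isomorphism $(H(V),0,0)\rightsquigarrow(V,d,0)$ coming from any deformation retract; for the converse it uses the minimal model theorem \cite[Theorem~$10.4.3$]{LodayVallette12} to extend the splitting $(V,d)\cong(H(V)\oplus K,d_K)$ to an $\infty$-isomorphism onto a trivial structure and then composes back to get an $\infty$-isotopy $(V,d,\alpha)\rightsquigarrow(V,d,0)$. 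You bypass the homology and the minimal model theorem entirely by reading off ``homotopy trivial $\iff$ $\infty$-isotopic to the trivial structure'' from Fresse's cylinder-object identification of $\sim_h$ with $\infty$-isotopy; this is legitimate (and indeed that equivalence is exactly what the paper's proof of Lemma~\ref{lem:HoTrivEA}, implications $(1)\Rightarrow(3)$ and $(3)\Rightarrow(2)$, already establishes), so your argument is a clean short-circuit, at the price of leaning entirely on the Fresse identification, whereas the paper's route simultaneously delivers the ``uniform triviality of all transferred structures'' interpretation stated after Lemma~\ref{lem:HoTrivEA}. One cosmetic correction: in the augmented convolution algebra the trivial structure is $\delta$, not $0$, so the gauge-triviality equation should read $(e^\lambda\star\at)\cc e^{-\lambda}=\delta$, equivalently $e^\lambda\star\at=\delta\cc e^\lambda$ (or, acting the other way, $e^\lambda\star\delta=(\delta+\bar\alpha)\cc e^\lambda$); your manipulation with $0\cc e^\lambda=0$ is not the identity actually needed, though the conclusion that gauge equivalence to the trivial structure is exactly the existence of an $\infty$-isotopy to it is unaffected.
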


\begin{proof}
Let us first prove that gauge trivial implies homotopy trivial. By Theorem~\ref{thm:DeligneGroupoidII}, being gauge trivial implies that there exists an $\infty$-isotopy 
$$ (V, d, 0) \stackrel{=}{\rightsquigarrow} (V,d, \alpha)\ . $$ 
For any deformation retract onto the homology $H(V)$, the chain map 
$$ (H(V), 0, 0) \stackrel{\sim}{\rightsquigarrow} (V,d, 0)\ . $$ 
is an $\infty$-quasi-isomorphism for the trivial structures. The composite of these two $\infty$-morphims 
show that the $\P_\infty$-algebra structure $\alpha$ is homotopy trivial. \\
Let us now prove that homotopy  trivial implies gauge trivial. Suppose that there exists a deformation retract onto the homology $H(V)$ with an associated transferred structure which is trivial. The deformation retract data provides us with an isomorphism of chain complexes $(V, d)\cong (H(V)\oplus K, d_K)$, where $(K, d_K)$ is acyclic. By \cite[Theorem~$10.4.3$]{LodayVallette12}, this chain map extends into an $\infty$-isomorphism 
$$(V, d, \alpha) \stackrel{\cong}{\rightsquigarrow} (H(V)\oplus K, d_K, 0)\ .$$
The inverse chain map provides us with an $\infty$-quasi-isomorphism 
$$(H(V)\oplus K, d_K, 0) \stackrel{\cong}{\rightsquigarrow}  (V, d, 0)\ .$$
between trivial $\P_\infty$-algebra structures. Since the composite of these two $\infty$-morphisms is an $\infty$-isotopy, then $\alpha$ is gauge trivial, which concludes the proof. 
\end{proof}

This theorem establishes a useful method to prove the uniform triviality for transferred homotopy structures on homology, without having to compute any transfer formula. This result says that, in order to prove the homotopy triviality of a $\P_\infty$-algebra structure, it is enough to find a series 
$f=1+f_{(1)}+f_{(2)}+\cdots$ in $\Hom_\Sy({\P}^{\ac}, \End_V)$
satisfying 
$$ f\star \delta   = (\delta + \alpha)\circledcirc f \ , $$ 
or equivalently, 
a series 
$\lambda=\lambda_{(1)} + \lambda_{(2)}+\cdots$ in $\Hom_\Sy({\overline{\P}}^{\ac}, \End_V)$
satisfying
$$ \left( e^\lambda \star \delta \right)\circledcirc e^{-\lambda} = \delta + \alpha\ .$$

This method was used in the  simplest kind of homotopy algebras, namely multicomplexes, in \cite{DotsenkoShadrinVallette15}, to prove the homotopy triviality of the Koszul's Batalin--Vilkovisky operator on the de Rham cohomology of Poisson manifolds. This allowed us  to define  new higher invariants parametrized by the cohomology of the moduli space of genus $0$ curves with marked points.

\section{Homotopy transfer theorem}\label{subsec:HTT}

In this section, we interpret the Homotopy Transfer Theorem of Proposition~\ref{pro:HTTcat} and \cite[Chapter~$10$]{LodayVallette12} in terms of gauge action in the pre-Lie deformation theory. Moreover, we provide complete formulae together with a conceptual explanation for their internal structure.   \\



Since we are working over a field, the datum of an acyclic cofibration, that is an injective quasi-isomorphism, is equivalent to the datum of a \emph{contraction} \cite{EilenbergMacLane53}:
\begin{eqnarray*}
&\xymatrix{     *{ \quad \ \  \quad (V, d_V)\ } \ar@(dl,ul)[]^{h}\ \ar@<0.5ex>[r]^{p} & *{\
(H,d_H)\quad \ \  \ \quad }  \ar@<0.5ex>[l]^{i}}&\\
& i p-\id_V =d_V  h+ h  d_V\ , \quad pi=\id_H\ , \quad 
h^2=0\ , \quad ph=0\ , \quad hi=0 \ .
\end{eqnarray*}
This datum is equivalent to a deformation retract datum satisfying the \textit{side conditions} (the three last conditions). 
Notice that such a datum is completely equivalent to the sole datum of a degree one linear operator $h$
on the chain complex $(V, d_V)$ satisfying $h^2=0$ and $hdh=h$,  see \cite{BarnesLambe91}. For simplicity, we will denote the projection onto the image of $H$ inside $V$ by $\pi:=ip$. We consider the following symmetric homotopies 
$$ h_n:=\frac{1}{n!}\sum_{\sigma \in \Sy_n}  \sum_{k=1}^n \left(
\id^{\otimes (k-1)} \otimes h \otimes \pi^{\otimes (n-k)}
\right)^\sigma$$
from $\pi^{\otimes n}$ to $\id^{\otimes n}$. \\

%


Let $\P$ be a Koszul operad. We  work inside the convolution algebra 
$$\a=\a_{\P,V}= \big(
\Hom_\Sy({\P}^{\ac}, \End_V),  (\partial_V)_*, \star, \circledcirc, {1}
\big)\ . $$
Recall that  Maurer--Cartan elements $\bar\alpha\in\MC(\g)$ are considered under the ``augmented'' form $\alpha:=\delta + \bar\alpha$ in this convolution algebra $\a$. The set of group-like elements is $\mathrm{G}=\{ 1 \}\times
\Hom_\Sy(\overline{\P}^{\ac}, \End_V)
$.\\

Let us introduce the following two degree $0$  operators acting on $\a$:
$$
\begin{array}{lcll}
\LL \ : & \mathrm{G} & \to & \a  \\
 & x& \mapsto &  h_*\ba \circledcirc x
\end{array} 
\quad\quad \& \quad \quad
\begin{array}{lcll}
\R \ : & \a & \to & \a  \\
 & x& \mapsto & - h^*(x\star \ba)\ ,
\end{array} 
$$
where $h_*\ba$ is equal to the composite
$$\P^{\ac} \xrightarrow{\ba} \End_V \xrightarrow{h_*}  \End_V $$
and where $h^*(x\star \ba)$ is equal to the composite
$$\P^{\ac}(n) \xrightarrow{(x\star \ba)(n)} \Hom(V^{\otimes n}, V)  \xrightarrow{h_n^*}  \Hom(V^{\otimes n}, V)  \ .$$

\begin{definition}[$\Phi$-kernel and $\Psi$-kernel]\leavevmode 

\begin{itemize}
\item[$\diamond$] The \emph{$\Phi$-kernel} is the unique group-like element  $\Phi \in \mathrm{G}$ solution to the fixed point equation 
$$\Phi=1+\LL(\Phi) \ . $$
 It is  equal to 
$$\boxed{\Phi=(1-h_*\ba)^{\circledcirc -1}=e^{-\ln(1-h_*\ba)}=e^{-\Omega(-h_*\ba)}=\sum_{t\in \mathsf{RT}} 
\textstyle{\frac{1}{|\mathrm{Aut}\,  t|}}\, t(h\ba)}  \ . $$

\item[$\diamond$]  The \emph{$\Psi$-kernel} is the unique group-like element  $\Psi \in \mathrm{G}$ solution to the fixed point equation 
$$\Psi=1+\R(\Psi) \ . $$
 It is explicitly equal to 
$$\boxed{\Psi=1 +\R(1) + \R^2(1) +\R^3(1) + \cdots = 
1 - h^*\ba + h^*(h^*\ba \star \ba) - h^*(h^*(h^*\ba \star \ba) \star \ba) +\cdots }\  . $$
\end{itemize}
\end{definition}

The $\Phi$-kernel and the $\Psi$-kernel induce the following two operators acting on the Maurer--Cartan variety:
$$
\begin{array}{cll}
 \MC(\g) & \to & \MC(\g)   \\
  {\alpha}   & \mapsto & \hat{\alpha} :=\left( \Phi^{-1} \star {\alpha} \right) \cc \Phi 
\end{array} 
\quad  \quad \quad 
\begin{array}{cll}
 \MC(\g) & \to & \MC(\g)   \\
 {\alpha}  & \mapsto & \check{\alpha} :=\left( \Psi \star {\alpha} \right) \cc \Psi^{-1} Ê\ .
\end{array} 
$$

 Any endomorphism of $V$, like $h$ or $\pi$ for instance, can be viewed as an element of the convolution algebra $\a$ defined by 
$\I \mapsto h $, and denoted with the same notation. Therefore, pushing forward by an endomorphism is equal to 
$h_*\ba=h \star \ba = h\cc \ba$, which we simply denote by $h\ba$. 
In the same way, we will denote, by a slight abuse of notations, the pullbacks $i^*(x)$ and pushforwards $p_*(x)$ simply by $xi$ and $px$ respectively.

\begin{proposition}\label{prop:MChatcheck}
Any Maurer--Cartan element $\bar\alpha \in \MC(\g)$ satisfies 
$$\boxed{\hat{\alpha}=\delta  + \pi \cc \bar\alpha \cc \Phi}   \quad \textrm{and} \quad 
 \boxed{\check{\alpha}=\delta +  \bar\alpha \cc \Phi\cc \pi}\ . $$
 \end{proposition}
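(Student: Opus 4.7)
The plan is to prove the first identity in detail; the second follows by a symmetric argument, with $\Psi$ replacing $\Phi$ and the roles of inputs and output interchanged. The strategy is to reduce the claim to the $\infty$-isotopy equation characterising $\hat{\alpha}$ and then verify that equation for the candidate formula.

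By Theorem~\ref{thm:DeligneGroupoidII}, since $\hat{\alpha}=(\Phi^{-1}\star\alpha)\cc\Phi$ is the gauge action of $\Phi^{-1}$ on $\alpha$, the group-like element $\Phi^{-1}$ is the $\infty$-isotopy from $\alpha$ to $\hat{\alpha}$; as in the proof of that theorem, this is equivalent to the equation $\Phi^{-1}\star\alpha=\hat{\alpha}\cc\Phi^{-1}$ in the convolution algebra $\a_{\P,V}$. By the associativity of $\cc$ from Corollary~\ref{cor:AssocUnit}, together with $\Phi\cc\Phi^{-1}=1$, the candidate $\tilde{\alpha}:=\delta+\pi\cc\bar{\alpha}\cc\Phi$ satisfies $\tilde{\alpha}\cc\Phi^{-1}=\delta\cc\Phi^{-1}+\pi\bar{\alpha}$, so it is enough to show
\[\Phi^{-1}\star\alpha \;=\; \delta\cc\Phi^{-1}+\pi\bar{\alpha}.\]
A direct computation based on Lemma~\ref{lem:circledcirc}, using that $\Phi^{-1}$ is group-like with $\Phi^{-1}(\I)=\id_V$ and that $\delta$ is concentrated on $\I$, yields $\Phi^{-1}\star\delta=\delta$ and $(\delta\cc\Phi^{-1})(\mu)-\delta(\mu)=\partial_V\circ\Phi^{-1}(\mu)$ for $\mu\in\oPa$. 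Splitting $\alpha=\delta+\bar{\alpha}$, the problem thus reduces to proving, as elements of $\g$, the identity
\[\Phi^{-1}\star\bar{\alpha} \;=\; \partial_V\circ\Phi^{-1} + \pi\bar{\alpha}.\]

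For this final identity I would combine three ingredients: the fixed-point equation $\Phi-1=h_*\bar{\alpha}\cc\Phi$ (and the explicit tree expansion of $\Phi^{-1}$ from Proposition~\ref{prop:ccInverse}), the Maurer--Cartan equation $d\bar{\alpha}+\bar{\alpha}\star\bar{\alpha}=0$, and the contraction identity $dh+hd=\pi-\id_V$. Writing $\pi\bar{\alpha}=\bar{\alpha}+(dh)_*\bar{\alpha}+(hd)_*\bar{\alpha}$, the Maurer--Cartan equation rewrites $(hd)_*\bar{\alpha}=h_*(d\bar{\alpha})=-h_*(\bar{\alpha}\star\bar{\alpha})$, which is precisely the non-linear contribution to $\Phi^{-1}\star\bar{\alpha}$ obtained by iterating the substitution $\Phi=1+h_*\bar{\alpha}\cc\Phi$; the term $(dh)_*\bar{\alpha}$ combines with the linear part of $\Phi^{-1}\star\bar{\alpha}$ to produce the correction $\partial_V\circ\Phi^{-1}$ once one recognises $\Phi^{-1}=1-h_*\bar{\alpha}+\cdots$. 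Organising the expansion by the weight grading of $\P^{\ac}$ gives, on both sides, tree sums indexed by the rooted trees appearing in Proposition~\ref{prop:MAINrelation}, and coassociativity of $\Delta$ identifies the coefficients.

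The main obstacle is the combinatorial bookkeeping: the product $\star$ uses the infinitesimal decomposition $\Delta_{(1)}$ while $\cc$ uses the full decomposition $\Delta$, so matching the two expansions requires careful use of coassociativity, in the spirit of the proof of Proposition~\ref{prop:MAINrelation}. For the second identity $\check{\alpha}=\delta+\bar{\alpha}\cc\Phi\cc\pi$, the same strategy applies with $\Psi$ in place of $\Phi$: the fixed-point equation $\Psi=1-h^*(\Psi\star\bar{\alpha})$ plays the symmetric role, and the homotopy now acts on the output via $h^*$ rather than on the inputs via $h_*$, so the corresponding reduction yields the symmetric identity proved by the same method.
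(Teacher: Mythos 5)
There are two genuine gaps here, one in each half.

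For the first identity, your reduction to $\Phi^{-1}\star\bar\alpha=\partial_V\circ\Phi^{-1}+\pi\bar\alpha$ rests on the claim $\Phi^{-1}\star\delta=\delta$, and that claim is false. Since $\Phi^{-1}=1-h_*\bar\alpha$ exactly (note that Proposition~\ref{prop:ccInverse} gives a tree expansion for $\Phi$, not for $\Phi^{-1}$; no iteration of the fixed-point equation is needed on this side), one has $\Phi^{-1}\star\delta=\delta-h_*(\bar\alpha\star\delta)$, and the term $h_*(\bar\alpha\star\delta)$ — precomposition of $h\bar\alpha$ with $\partial_V$ at the inputs — does not vanish. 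Consequently your reduced identity is false as stated: its left-hand side is $\bar\alpha-h_*(\bar\alpha\star\bar\alpha)$ while its right-hand side is $\bar\alpha-h_*(\bar\alpha\star\delta)-h_*(\bar\alpha\star\bar\alpha)$. Your verification only appears to close because of a compensating slip: $h\circ\partial_V\circ\bar\alpha$ equals $h_*(\delta\star\bar\alpha)$, which is \emph{not} $h_*$ applied to the full convolution differential $\delta\star\bar\alpha+\bar\alpha\star\delta$ of $\bar\alpha$, so the Maurer--Cartan equation gives $h_*(\delta\star\bar\alpha)=-h_*(\bar\alpha\star\delta)-h_*(\bar\alpha\star\bar\alpha)$ rather than $-h_*(\bar\alpha\star\bar\alpha)$; the two dropped copies of $h_*(\bar\alpha\star\delta)$ cancel. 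The repair is to keep all terms and compute $\Phi^{-1}\star(\delta+\bar\alpha)$ in one go, which yields $\delta\star(1-h_*\bar\alpha)+\pi\bar\alpha$ and then the claim after applying $\cc\,\Phi$ on the right and using left linearity of $\cc$ together with $(\partial_V\circ g)\cc\Phi=\partial_V\circ(g\cc\Phi)$. This corrected computation is short and is essentially what the paper does.

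The second identity does \emph{not} follow by a symmetric argument, and this is where the real work lies. The formula to be proved is $\check\alpha=\delta+\bar\alpha\cc\Phi\cc\pi$: it involves the $\Phi$-kernel, not the $\Psi$-kernel, even though $\check\alpha$ is defined by conjugating with $\Psi$. The entire content is the non-obvious identity $(\Psi\star\bar\alpha)\cc\pi=\bar\alpha\cc\Phi\cc\pi$, which converts the leveled-tree expansion of $\Psi$ (built from the symmetrized homotopies $h_n$ acting on the $n$ inputs) into the tree expansion of $\Phi$ (built from a single $h$ on each internal edge). In the paper this is Lemma~\ref{lem:TechR}, whose proof requires the identities of Lemma~\ref{lem:hn} for the operators $h_n$ (in particular $(h_p\otimes\id^{\otimes q}-\id^{\otimes p}\otimes h_q)h_{p+q}=h_p\otimes h_q$, proved via binomial identities) and the combinatorial identity $\sum_\lambda\omega(\lambda)=1/|\mathrm{Aut}\,t|$ of Proposition~\ref{prop:CombTrees}, together with a telescoping manipulation of the operators $\R$, $\R_1$, $\R_\pi$. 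None of this is produced by interchanging the roles of inputs and output in the first argument, and your proposal does not engage with it. (A minor additional slip: in the paper's conventions $\Phi$ is built from $h_*$, postcomposition on the output, and $\Psi$ from $h^*$, precomposition on the inputs — the reverse of what you wrote.)
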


\begin{proof}
Using the Maurer--Cartan equation satisfied by $\ba+\delta$ and the homotopy relation, we get:
\begin{eqnarray*}
\Phi^{-1} \star (\delta +\bar \alpha) &= & (1-h\ba) \star (\delta + \ba)= \delta+  \ba- h\ba \star \delta  - h \ba\star \ba  \\
&=&\delta + \ba  - h (\ba \star \delta+\ba\star \ba )= \delta + \ba  +h (\delta\star \ba)\\
&=& \delta+ \ba  + (hd_V)  \star \ba= \delta+  \ba  + (-d_Vh +\pi-\id_V)  \star \ba\\
&=&   \delta\star(1-h\ba)+\pi\, \ba   \ .
\end{eqnarray*}
Composing on the right by $\cc\,  \Phi$, we obtain, by left linearity of the circle product:
\begin{eqnarray*}
\hat{\at} =\left( \delta\star(1-h\ba)+ \pi\, \ba  \right) \cc \Phi = \delta+ \pi\,(\bar\alpha \cc \Phi) \ .
\end{eqnarray*}

The other relation is proved as follows. 
By the definition of  $\check{\alpha}$, we have to prove 
$$\left( \Psi \star {\alpha} \right) \cc \Psi^{-1}=
\delta + \bar\alpha \cc \Phi\cc \pi\ , $$
which is equivalent to 
$$\delta \cc \Psi - \Psi \star \delta =\Psi \star \ba - \ba \cc \Phi \cc \pi \cc \Psi\ . $$
By analogy with the linear operator $\R$, we consider the following two linear operators acting on $\a$
$$
\begin{array}{lcll}
\R_\pi \ : & \a & \to & \a  \\
 & x& \mapsto & (-1)^{|x|}\, (x\star \ba)\cc \pi \ 
\end{array} \quad\quad \& \quad \quad
\begin{array}{lcll}
\R_1 \ : & \a & \to & \a  \\
 & x& \mapsto & (-1)^{|x|}\, x\star \ba\ .
\end{array} 
$$

Using the Maurer--Cartan equation $\delta  \star \ba = -\ba \star \delta -\ba \star \ba$ and the homotopy relation 
$\allowbreak \delta\star h_n=\allowbreak \pi^{\otimes n } - \id^{\otimes n} - h_n \star \delta $ as rewriting rules, we get 
\begin{eqnarray*}
\delta \cc \Psi - \Psi \star \delta&=& \delta \star \Psi - \Psi \star \delta
=  \delta \star \sum_{k\ge 0} \R^k(1) - \sum_{k\ge 0} \R^k(1) \star \delta=   \sum_{k\ge 0}\left( \delta\star \R^k(1) - 
\R^k(1) \star \delta\right) \\
&=&\sum_{k\ge 1} \sum_{l=1}^{k}  -\R^{k-l+1}\big( \R_1\big(\R^{l-1}(1)\big)\big) + \R^{k-l}\big( \R_1\big(\R^{l-1}(1)\big)\big) - \R^{k-l}\big( \R_\pi \big(\R^{l-1}(1)\big)\big)
\\
&=&\sum_{k\ge 0}  \R_1\big(\R^{k}(1)\big)- \sum_{k\ge 0} \R^k \R_\pi \Big(\sum_{l\ge 0} \R^l\Big) (1)\\
&=& \Psi \star \ba - \sum_{k\ge 0} \R^k \big(\R_\pi(\Psi)\big)  \  .
\end{eqnarray*} 
So it remains to prove that 
$$\sum_{k\ge 0} \R^k \big(\R_\pi(\Psi)\big)=\ba \cc \Phi\cc \pi \cc \Psi\ ,$$
which is a direct corollary of the following lemma. 
\end{proof}

\begin{lemma}\label{lem:TechR}\leavevmode
\begin{enumerate}
\item \quad $\R_\pi(\Psi)=\ba \cc \Phi \cc \pi$,
\item \quad $\sum_{k\ge 0} \R^k (x \cc \pi)=x\cc \pi  \cc \Psi\ .$
\end{enumerate}
\end{lemma}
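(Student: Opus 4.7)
My plan is to prove both parts by uniqueness of fixed points in the complete filtered pre-Lie algebra $\a_{\P,V}$, whose filtration is given by the weight grading of $\P^{\ac}$. Since $\bar\alpha$ has positive weight, the operator $\R(-) = -h^*((-)\star\bar\alpha)$ strictly raises the weight filtration; consequently, any affine equation of the form $T = c + \R(T)$ admits at most one solution in $\a_{\P,V}$, which one can obtain by Picard iteration from $T_0 = 0$.

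For part (2), the element $S := \sum_{k\ge 0}\R^k(x\cc\pi)$ is manifestly a fixed point of the operator $T \mapsto x\cc\pi + \R(T)$. To show that $x\cc\pi\cc\Psi$ is also a fixed point, I would substitute $\Psi = 1 + \R(\Psi)$ and use the associativity of $\cc$ (Corollary~\ref{cor:AssocUnit}) to reduce the claim to the identity
\[(x\cc\pi)\cc\R(\Psi) \;=\; \R\bigl(x\cc\pi\cc\Psi\bigr) \;=\; -h^*\!\bigl((x\cc\pi\cc\Psi)\star\bar\alpha\bigr).\]
This in turn is established by a direct tree-level computation. Writing $(x\cc\pi\cc\Psi)(y) = \sum x(y_{(1)})\circ\bigotimes_j(\pi\circ\Psi(y_{(2,j)}))$ from the cooperadic decomposition $\Delta(y)$, and splitting the sum according to whether each top piece $y_{(2,j)}$ equals $\I$ or lies in $\oPa$, one sees that the ``all $\I$'' contribution reconstructs $x\cc\pi$ and the remaining terms, after substituting $\Psi(y_{(2,j)}) = -(\Psi\star\bar\alpha)(y_{(2,j)})\circ h_{n_j}$ on each non-trivial top piece, repackage via the coassociativity of $\Delta$ into exactly $-h^*((x\cc\pi\cc\Psi)\star\bar\alpha)$.

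For part (1), I would implement the parallel strategy. Setting $W := \Psi\star\bar\alpha$, the relation $\Psi = 1 - h^*W$ yields the recursion $W = \bar\alpha - (h^*W)\star\bar\alpha$, so $\R_\pi(\Psi) = W\cc\pi$ satisfies $W\cc\pi = \bar\alpha\cc\pi - ((h^*W)\star\bar\alpha)\cc\pi$, which again has a unique solution by the contraction argument. One then verifies that $\bar\alpha\cc\Phi\cc\pi$ satisfies the same recursion, starting from $\Phi = 1 + h\bar\alpha\cc\Phi$ and using the associativity of $\cc$ together with identity~(\ref{eqn:Prelie1}) to pass from $\bar\alpha\cc(h\bar\alpha\cc\Phi)\cc\pi$ to $((h^*W)\star\bar\alpha)\cc\pi$. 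A useful simplification throughout is the annihilation relation $h_n\circ\pi^{\otimes n}=0$, which eliminates every term in which an $h$-labelled edge of a tree feeds into an external $\pi$; this cancellation is what makes the tree-series $\Phi = \sum_t|\mathrm{Aut}\,t|^{-1}\,t(h\bar\alpha)$ of Proposition~\ref{prop:ccInverse} match, after composition with $\pi$, the iterated $h^*$-expansion of $\Psi$. The main obstacle I anticipate is this final combinatorial bookkeeping: matching coefficients and signs between the rooted-tree series for $\Phi$ and the linear iteration defining $\Psi$ on the nose, rather than just modulo automorphism factors.
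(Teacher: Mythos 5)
Your reduction of both identities to uniqueness of solutions of filtered fixed-point equations is a sensible reorganization, and for part (2) the equation $S = x\cc\pi + \R(S)$ does characterize $\sum_{k\ge 0}\R^k(x\cc\pi)$ uniquely, since $\R$ raises the weight filtration. But the step you dismiss as a ``direct tree-level computation'' that ``repackages via the coassociativity of $\Delta$'' is exactly where the entire difficulty of the lemma lives, and coassociativity cannot do it. The discrepancy between the two sides sits on the $\End_V$ side, not the cooperad side: in $x\cc\pi\cc\Psi$ each nontrivial branch $\Psi(y_{(2,j)})$ carries its own nested symmetric homotopies $h_{m}$ at its own leaf levels, whereas each application of $\R$ inserts a single \emph{global} $h_n$ at the bottom of the whole arity-$n$ composite. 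Reconciling a tensor product of independent branch homotopies with an iterated global one requires the identity $(h_p\otimes\id^{\otimes q}-\id^{\otimes p}\otimes h_q)h_{p+q}=h_p\otimes h_q$ of Lemma~\ref{lem:hn} (and its telescoped multi-branch version), which is the technical core of the paper's induction on the properties $\textsc{P}_{i,j}$ and which your proposal never establishes or invokes.

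For part (1) the situation is worse in two respects. First, the recursion you derive, $W\cc\pi=\ba\cc\pi-((h^*W)\star\ba)\cc\pi$ with $W=\Psi\star\ba$, is not autonomous in the unknown $W\cc\pi$: the term $(h^*W)\star\ba$ sees $W$ \emph{before} the right composition with $\pi$, and $W$ cannot be recovered from $W\cc\pi$, so the uniqueness-of-fixed-points argument does not apply as stated. (Indeed $\ba\cc\Phi$ does not equal $\Psi\star\ba$ before composing with $\pi$; the two agree only modulo terms killed by $h\pi=0$.) Second, even granting a correct set-up, matching the un-levelled tree series $\Phi=\sum_t|\mathrm{Aut}\,t|^{-1}\,t(h\ba)$ against the levelled iteration defining $\Psi$ requires both the identity $h_{k+1+l}(h^{\otimes k}\otimes\id\otimes\pi^{\otimes l})=\frac{1}{k+1}h^{\otimes k+1}\otimes\pi^{\otimes l}$ of Lemma~\ref{lem:hn} and the combinatorial statement $\sum_{\lambda}\omega(\lambda)=1/|\mathrm{Aut}\,t|$ of Proposition~\ref{prop:CombTrees}. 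You correctly identify this coefficient matching as the main obstacle, but you leave it unresolved, so the argument is not a proof as it stands.
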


To prove it, we will need the following formulas satisfied by the homotopies $h_n$. 

\begin{lemma}\label{lem:hn}\leavevmode
\begin{enumerate}
\item \quad $
h_{k+1+l} (h^{\otimes k} \otimes \id\otimes \pi^{\otimes l})=\big(h_{k+1} (h^{\otimes k}\otimes \id)\big)\otimes  \pi^{\otimes l}= \frac{1}{k+1}h^{\otimes k+1}\otimes \pi^{\otimes l}$,

\item \quad $(h_p\otimes \id^{\otimes q}- \id^{\otimes p}\otimes h_q)h_{p+q}=h_p\otimes h_q$.
\end{enumerate}
\end{lemma}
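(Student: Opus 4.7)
My plan hinges on rewriting $h_n$ in an integral form that absorbs the symmetrisation over $\Sy_n$ cleanly. A direct count in the defining double sum shows that in $h_n$ the coefficient of the monomial operator \emph{``$h$ at position $j$, $\id$'s on a subset $T\subset\{1,\ldots,n\}\setminus\{j\}$, $\pi$'s on the rest''} equals $\frac{|T|!(n-1-|T|)!}{n!}$ (one counts the pairs $(k,\sigma)$ producing this monomial in the defining formula). Since $\frac{t!(n-1-t)!}{n!}=\int_0^1 x^t(1-x)^{n-1-t}\,dx$, this assembles into
$$h_n=\sum_{j=1}^n\int_0^1 \pi_x^{\otimes(j-1)}\otimes h\otimes \pi_x^{\otimes(n-j)}\,dx,\qquad \pi_x:=x\,\id+(1-x)\pi.$$
The side conditions yield the workhorse composition laws $\pi_x h=xh$, $h\pi_x=xh$, $\pi_x\pi=\pi\,\pi_x=\pi$, and $\pi_y\pi_x=\pi_{xy}$.

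For (1), I compose $h_{k+1+l}$ with $h^{\otimes k}\otimes\id\otimes\pi^{\otimes l}$ via the integral form. Every term whose $h$ lies in $\{1,\ldots,k\}$ vanishes by $h^2=0$, and every term whose $h$ lies in $\{k+2,\ldots,k+1+l\}$ vanishes by $h\pi=0$, so only the term placing $h$ at position $k+1$ survives. Position-wise it evaluates to $(xh)^{\otimes k}\otimes h\otimes \pi^{\otimes l}$; the $x$-integral $\int_0^1 x^k\,dx=\frac{1}{k+1}$ then yields both asserted forms.

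For (2), I expand $(h_p\otimes\id^{\otimes q})h_{p+q}$ and $(\id^{\otimes p}\otimes h_q)h_{p+q}$ in the integral form, using parameter $x$ for $h_{p+q}$ and $y$ (resp.\ $z$) for $h_p$ (resp.\ $h_q$). Writing $i$ for the $h$-position of the outer factor and $j$ for the $h$-position of $h_{p+q}$, the Koszul rule $(A\otimes B)(C\otimes D)=(-1)^{|B||C|}(AC)\otimes(BD)$ contributes a graded sign $-1$ when $j<i$ and $+1$ otherwise (the two $h$'s are the only odd-degree factors). When the two resulting $h$'s lie in the same half of $\{1,\ldots,p+q\}$, the ordered pairs $(i,j)$ and $(j,i)$ produce the same underlying tensor (the positions of the $h$'s coincide as unordered pairs, and the other factors are symmetric in $x\leftrightarrow y$ or $x\leftrightarrow z$) but opposite graded signs, so they cancel in the subtraction.

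What remains is indexed by cross-pairs $(i,j)\in\{1,\ldots,p\}\times\{p+1,\ldots,p+q\}$, and after the sign bookkeeping (the contribution from $(h_p\otimes\id^{\otimes q})h_{p+q}$ comes with sign $+1$ since $j>i$, that from $-(\id^{\otimes p}\otimes h_q)h_{p+q}$ comes with sign $-(-1)=+1$ since $j<i$), identity~(2) reduces to
$$\int_0^1\!\!\int_0^1 x\,\pi_{xy}^{\otimes(p-1)}\otimes\pi_x^{\otimes(q-1)}\,dy\,dx+\int_0^1\!\!\int_0^1 x\,\pi_x^{\otimes(p-1)}\otimes\pi_{xz}^{\otimes(q-1)}\,dz\,dx=\int_0^1\!\!\int_0^1\pi_y^{\otimes(p-1)}\otimes\pi_z^{\otimes(q-1)}\,dy\,dz.$$
The substitution $u=xy$ in the first summand (resp.\ $u=xz$ in the second) turns the inner integral into $\frac{1}{x}\int_0^x\pi_u^{\otimes\cdot}\,du$, cancelling the outer factor of $x$; Fubini on the triangle $\{0\le u\le x\le 1\}$ (together with a renaming $u\leftrightarrow x$ in one copy) glues the two summands into the symmetric square integral on the right. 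The main obstacle I expect is precisely this sign bookkeeping: ensuring that the intra-half contributions cancel exactly and that the Koszul signs together with the minus in $-(\id^{\otimes p}\otimes h_q)h_{p+q}$ align so as to yield $+1$ in both cross-half summands.
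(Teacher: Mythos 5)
Your proof is correct, and for part~(2) it takes a genuinely different route from the paper's. Part~(1) is handled essentially the same way in both: the side conditions $h^2=0$, $hi=0$, $ph=0$ kill every summand except the one placing $h$ at position $k+1$, and the scalar $\frac{1}{k+1}$ appears in your version as $\int_0^1 x^k\,dx$ versus a direct count of the permutations fixing the last slot in the paper's. For part~(2), the paper starts from exactly the rewriting of $h_n$ that you derive (the coefficient $\frac{|I|!\,|P|!}{n!}$ of the monomial with identities on $I$ and projections on $P$), but then compares the coefficient of each monomial $\id^{\otimes I}\pi^{\otimes P}h^{\otimes H}\otimes \id^{\otimes I'}\pi^{\otimes P'}h^{\otimes H'}$ on the two sides directly, which requires two auxiliary binomial identities proved separately. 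Your Beta-integral packaging $h_n=\sum_j\int_0^1\pi_x^{\otimes(j-1)}\otimes h\otimes\pi_x^{\otimes(n-j)}\,dx$ with $\pi_x=x\,\id+(1-x)\pi$ replaces those identities by the semigroup law $\pi_y\pi_x=\pi_{xy}$, one change of variables, and Fubini on the triangle $\{0\le u\le x\le 1\}$ --- an exact repackaging of the same coefficients that trades combinatorics for elementary calculus. The sign bookkeeping you worry about is sound and is the same in substance as the paper's (which likewise observes that the minus in front of $\id^{\otimes p}\otimes h_q$ is absorbed by the Koszul sign of the odd operator $h$). One small imprecision: the same-half terms do not cancel ``in the subtraction'' between the two products --- first-half pairs occur only in $(h_p\otimes\id^{\otimes q})h_{p+q}$ and second-half pairs only in $(\id^{\otimes p}\otimes h_q)h_{p+q}$ --- rather they cancel internally within each product under the involution $(i,j)\mapsto(j,i)$, exactly as your own sign analysis shows; this does not affect the argument.
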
 

A somewhat indirect proof of the latter relation is also contained in \cite[Sections 4-5]{Berglund14}.

\begin{proof}[Proof of Lemma~\ref{lem:hn}]\leavevmode
\begin{enumerate}
\item Using the side conditions, the first one comes from 
\begin{eqnarray*}
h_{n+k}(\id^{\otimes n}\otimes \pi^{\otimes k})&=&\frac{1}{(n+k)!}\sum_{\sigma \in \Sy_{n+k}}  \sum_{l=1}^{n+k} \left(
\id^{\otimes (l-1)} \otimes h \otimes \pi^{\otimes (n+k-l)}
\right)^\sigma (\id^{\otimes n}\otimes \pi^{\otimes k})\\
&=&\frac{1}{(n+k)!}\sum_{\sigma \in \Sy_{n}}  \sum_{l=1}^{n} 
\frac{(n+k)!}{n!}
\left(
\id^{\otimes (l-1)} \otimes h \otimes \pi^{\otimes (n-l)}
\right)^\sigma \otimes  \pi^{\otimes k}\\
&=&h_n\otimes \pi^{\otimes k}
\end{eqnarray*}
and the second one comes from  
\begin{eqnarray*}
h_{n+1}(h^{\otimes n}\otimes \id)&=&\frac{1}{(n+1)!}\sum_{\sigma \in \Sy_{n+1}}  \sum_{l=1}^{n+1} \left(
\id^{\otimes (l-1)} \otimes h \otimes \pi^{\otimes (n+1-l)}
\right)^\sigma (h^{\otimes n}\otimes \id)\\
&=&\frac{1}{(n+1)!}\sum_{\sigma \in \Sy_{n+1} \atop \sigma(n+1)=n+1}  \left(
\id^{\otimes n} \otimes h \right)^\sigma (h^{\otimes n}\otimes \id)\\
&=&\frac{1}{n+1}h^{\otimes n+1}\ . 
\end{eqnarray*}

\item Let us denote $n=p+q$. 
We use two combinatorial identities:
\begin{equation}\label{eq:1st-combinatorial}
\binom{a+b+c+1}{a+b+1} = \sum_{i+j=c} \binom{a+i}{a} \binom{b+j}{b}
\end{equation}
and
\begin{equation}\label{eq:2nd-combinatorial}
\binom{a+b+c+d+2}{a+b+1} = \sum_{i+j=b} \binom{a+c+i+1}{c} \binom{j+d}{d} + \sum_{i+j=d} \binom{a+c+i+1}{a} \binom{j+b}{b},
\end{equation}
which have both straightforward combinatorial proofs. 

The definition of $h_n$ can be rewritten as follows:
\begin{equation}\label{eq:h-newdefinition}
h_n= \sum_{\substack{I\sqcup P\sqcup H = [n] \\ |H|=1}} \mathrm{id}^{\otimes I}  \pi^{\otimes P} h^{\otimes H} \cdot \frac{|I|!|P|!}{n!}\ ,
\end{equation}
where we mean that we put $\mathrm{id}$ (resp. $\pi$, $h$) on the places indexed by $I$ (resp. $P$, $H$).
This way we can consider $h_p\otimes h_q$ as a sum over $[p]=I\sqcup P\sqcup H$ and $[q]=I'\sqcup P'\sqcup H'$ (here $[q]$ refers to the last $q$ places, so $[p]\sqcup[q]=[n]$). If we assume that $|I|=a$, $|P|=b$, $|I'|=c$, $|P'|=d$, then the coefficient of 
$\mathrm{id}^{\otimes I}  \pi^{\otimes P} h^{\otimes H} \otimes \mathrm{id}^{\otimes I'}  \pi^{\otimes P'} h^{\otimes H'}$ is equal to
\begin{equation}\label{eq:full-coefficient}
\frac{a!b!c!d!}{(a+b+1)!(c+d+1)!}\ .
\end{equation}

Let us compute the coefficient of the same expression in $(h_p\otimes \id^{\otimes q})h_n$. Consider particular summands in $h_p$ and $h_n$ corresponding to the partitions $[p]=I''\sqcup P''\sqcup H''$ and 
$[n]=I'''\sqcup P'''\sqcup H'''$. These partitions give $\mathrm{id}^{\otimes I}  \pi^{\otimes P} h^{\otimes H} \otimes \mathrm{id}^{\otimes I'}  \pi^{\otimes P'} h^{\otimes H'}$ if and only if we have
\begin{itemize}
\item[$\diamond$]
$I'''\cap [q] = I'$, $P'''\cap [q] = P'$
$H'''\cap [q] = H'$ (the last condition follows that $H'''\cap [p]=\emptyset$),
\item[$\diamond$]
$I'''\supset I$, $I'''\supset H$, and $(P'''\cap [p]) \subset P$,
\item[$\diamond$] $I''\supset I$, $H''=H$, and $P''\subset P$ such that $P''\cup (P'''\cap [p]) = P$.
\end{itemize}
Let us assume that $|P'''\cap [p]| = b_2$, $b_1+b_2=b$, and $|P''|=b_1+i$, $i+j=b_2$. With this assumption, if we take the sum over all possible choices of $I'',P'',H''$ and 
$I''',P''',H'''$ satisfying the conditions above and using Equation~\eqref{eq:h-newdefinition}, we see that the coefficient of $\mathrm{id}^{\otimes I}  \pi^{\otimes P} h^{\otimes H} \otimes \mathrm{id}^{\otimes I'}  \pi^{\otimes P'} h^{\otimes H'}$ in $(h_p\otimes \id^{\otimes q})h_n$ is equal to
\begin{equation*}
\sum_{b_1+b_2=b} \binom{b}{b_2} \frac{(a+b_1+c+1)!(b_2+d)!}{(a+b+c+d+2)!}
\sum_{i+j=b_2} \binom{b_2}{j} \frac{(a+i)!(b_1+j)!}{(a+b+1)!}\ .
\end{equation*}
Using the combinatorial identity~\eqref{eq:1st-combinatorial}, we can rewrite this as 
\begin{equation}\label{eq:1st-summand}
\frac{a!b!c!d!}{(a+b+c+d+2)!}\sum_{b_1+b_2=b}  \frac{(a+b_1+c+1)!(b_2+d)!}{
	c!(a+b_1+1)!b_2!d!}\ .
\end{equation}

Then we can compute, in exactly the same way, the coefficient of $\mathrm{id}^{\otimes I}  \pi^{\otimes P} h^{\otimes H} \otimes \mathrm{id}^{\otimes I'}  \pi^{\otimes P'} h^{\otimes H'}$ in $-(\id^{\otimes p}\otimes h_q)h_n$. It is equal to
\begin{equation}\label{eq:2nd-summand}
\frac{a!b!c!d!}{(a+b+c+d+2)!}\sum_{d_1+d_2=d}  \frac{(a+d_1+c+1)!(b+d_2)!}{
	a!(c+d_1+1)!b!d_2!} \ .
\end{equation}
The sign is changed from minus to plus because of the Koszul sign, since $h$ is odd.

Now, the  combinatorial identity~\eqref{eq:2nd-combinatorial} ensures us that the sum of~\eqref{eq:1st-summand} and~\eqref{eq:2nd-summand} is equal to the coefficient~\eqref{eq:full-coefficient}, which   completes the proof.

\end{enumerate}
\end{proof}

\begin{proof}[Proof of Lemma~\ref{lem:TechR}]\leavevmode

\begin{enumerate}

\item By definition, we have
\begin{eqnarray*}
\R_\pi(\Psi)&=&(\Psi\star \ba )\cc \pi\\
&=&
\big(\ba - h^*\ba\star \ba  + h^*(h^*\ba \star \ba)\star \ba - h^*(h^*(h^*\ba \star \ba) \star \ba)\star \ba +\cdots\big)\cc \pi \\
&=& \ba\cc \pi  - (h^*\ba\star \ba)\cc \pi   + (h^*(h^*\ba \star \ba)\star \ba)\cc \pi  + \cdots \ . 
\end{eqnarray*}
An element of the convolution pre-Lie algebra $\a$ like $(h^*(h^*\ba \star \ba)\star \ba)\cc \pi$ when applied to an element of $\P^{\ac}$ gives rise to an operation from $V^{\otimes n}$ to $V$ equal to a linear combination of composites of the form 
$$   \alpha_3 \, h_3 \, (\id^{\otimes 2}\otimes \alpha_2) \, h_4\, (\alpha_3\otimes \id^{\otimes 3})\, \pi^{\otimes 6} =\vcenter{
\xymatrix@M=4pt@R=8pt@C=8pt{
\pi & \pi & \pi & \pi & \pi\ar@{-}[dd] & & \pi \ar@{-}[dd] & \\
*{}\ar@{..}[r] & *+[o][F-]{}\ar@{-}[u]\ar@{-}[ur]\ar@{-}[ul] \ar@{..}[rrrrr] \ar@{-}[ddd] & & & & &*{}  & \\
*{}\ar@{--}[rrrrrr] & *{\bullet}& & *{\bullet}& *{\bullet} & & *{\bullet}& h_4\ \\
*{}\ar@{..}[rrrrr] & & & & & *+[o][F-]{}\ar@{-}[d]\ar@{-}[ul]\ar@{-}[ur]  \ar@{..}[r]& *{} & \\
*{}\ar@{--}[rrrrrr]&*{\bullet} & & *{\bullet}& & *{\bullet} &*{}  & h_3\ .\\
& & & *+[o][F-]{}\ar@{-}[d]\ar@{-}[ull] \ar@{-}[uuuuu]  \ar@{-}[urr] & & & & \\
& & & & & & & }}
$$
Using Relation~$(1)$ of Lemma~\ref{lem:hn},  this kind of composites is equal, up to sign, to 
$$1 \times \frac12\   \alpha_3 \, (h\alpha_3 \otimes \id \otimes h\alpha_2)\, \pi^{\otimes 6}=\frac12
\vcenter{
\xymatrix@M=4pt@R=8pt@C=8pt{
\pi & \pi & \pi & \pi & \pi & & \pi  \\
 & *+[o][F-]{}\ar@{-}[u]\ar@{-}[ur]\ar@{-}[ul]  & & & & *+[o][F-]{}\ar@{-}[ul]\ar@{-}[ur] &  \\
& & & *+[o][F-]{}\ar@{-}[d]\ar@{-}[ull]_h \ar@{-}[uu]  \ar@{-}[urr]^h & & &  \\
& & & & & & }}.
$$
{The idea is now to use this relation to pull down all the $\pi$'s and to rewrite these level-tree composites by replacing all the level-wise homotopies $h_n$  by simple homotopies $h$ labeling the internal edges.} This process produces the following combinatorial coefficient. 
To any levelization $\lambda$ of a rooted tree  $t\in \mathsf{RT}$, this coefficient, called the
weight $\omega(\lambda)$, is equal to 
the product over the levels between two vertices of one over the number of internal edges crossing this level, see Appendix~\ref{App:Comb} for more details. 

Therefore, we get
\begin{eqnarray*}
\R_\pi(\Psi)&=&\left(\sum_{t\in \mathsf{RT}} \sum_{\scriptstyle \text{levelization}\atop \scriptstyle \lambda\ \text{of} \ t} \omega(\lambda)\,  t(\ba, h)\right) \cc \pi\ , 
\end{eqnarray*}
where $t(\ba, h)$ stands for the element in the convolution pre-Lie algebra $\a$, which is the image in $\a$ of the element in the free pre-Lie algebra on two generators defined by the underlying tree $t$ with the root labelled by $\ba$ and the other vertices labelled by $h\ba$. 
Using Proposition~\ref{prop:CombTrees} which asserts 
$$
\sum_{\scriptstyle \text{levelization}\atop \scriptstyle \lambda\ \text{of} \ t} \omega(\lambda)=\frac{1}{|\mathrm{Aut}\, t|}\ , $$
we conclude that 
$$
\R_\pi(\Psi)=\left(\sum_{t\in \mathsf{RT}} \frac{1}{|\mathrm{Aut}\, t|} \,  t(\ba, h)\right) \cc \pi
= (\ba \cc \Phi)\cc \pi\ .
$$

\item We use the same method as above. 
By definition, we have
\begin{eqnarray*}
\sum_{k\ge 0} \R^k (x \cc \pi)&=&x \cc \pi  - h^*((x \cc \pi)\star \ba)   + 
h^*( h^*((x \cc \pi)\star \ba)\star \ba)
 + \cdots \ . 
\end{eqnarray*}
Applied to an  element of $\P^{\ac}$, an element like $h^*( h^*((x \cc \pi)\star \ba)\star \ba)$ produces the following kind of composite 

$$  x_3 \, \pi^{\otimes 3}\, (\id^{\otimes 2}\otimes \alpha_2)\, h_4 \, (\alpha_3\otimes \id^{\otimes 3})\, h_6 =\vcenter{
\xymatrix@M=4pt@R=8pt@C=10pt{
*{\bullet}\ar@{--}[rrrrrr] & *{\bullet} & *{\bullet} &  *{\bullet}&  *{\bullet} \ar@{-}[dd] & & *{\bullet}  \ar@{-}[dd] &h_6 \ \\
*{}\ar@{..}[r] & *+[o][F-]{}\ar@{-}[u]\ar@{-}[ur]\ar@{-}[ul] \ar@{..}[rrrrr] \ar@{-}[ddd] & & & & &*{}  & \\
*{}\ar@{--}[rrrrrr] & *{\bullet}*{}& & *{\bullet}*{}& *{\bullet} *{}& & *{\bullet}& h_4\ \\
*{}\ar@{..}[rrrrr] & & & & & *+[o][F-]{}\ar@{-}[d]\ar@{-}[ul]\ar@{-}[ur]  \ar@{..}[r]&*{}  & \\
&{\pi} & & \pi \ar@{-}[uuuu]  & & \pi & & \ .\\
& & & *+[o][F-]{}\ar@{-}[d]\ar@{-}[ull] \ar@{-}[u]  \ar@{-}[urr] & & & & \\
& & & & & & & }}
$$
Using the relation $(\pi^{\otimes k}\otimes \id^{\otimes l})\, h_{k+l} =h_k\otimes \pi^{\otimes l}$, we only deal with the induced $h_n$ labeling the edges above vertices different from the root. 
Since the abovementioned  composites produce operations from $V^{\otimes n}$, we can always put on the right-hand side the sub-tree above the root which contains the leaf corresponding to the last occurence of $V$ in $V^{\otimes n}$, and on the left-hand side, the rest of the sub-trees. 
Let us prove, by induction on $k=i+j\in\mathbb{N}$, the following statement:
\begin{itemize}
\item[$\left(\textsc{P}_{i,j}\right)$:]
for any underlying rooted trees $t$ with $k+1=i+j+1$ vertices, $i$ on the left-hand side above the root and $j$ on the right-hand side above the root, the element  $\R^k (x \cc \pi)\in \a$ producing composites equal to the sum over the levelizations of $t$, with $i$ vertices on the left-hand side above the root and $j$ vertices on the right-hand side above the root, 
 is equal to the element of $\a$ producing composites equal to the sum of levelizations of the sub-forest with $i$ vertices, denoted $\Lambda_i$ on the left-hand side above the root and composites corresponding to $\Psi_k$ on the right-hand side of the root. 
\end{itemize}
For any $k\ge 0$, the properties $\textsc{P}_{k,0}$ and $\textsc{P}_{0,k}$ are obvious. 
So, for instance, $\textsc{P}_{i,j}$ holds for $k=0$ and $k=1$. Suppose that $\textsc{P}_{i,j}$ holds for $i+j\leqslant k$ and let us prove it for $i+j= k+1$. Since $\textsc{P}_{k+1,0}$ and $\textsc{P}_{0,k+1}$ are proved, we can suppose $i, j\geqslant 1$. 
The element $\R^k (x \cc \pi)$, when applied to an element of $\P^{\ac}$ produces a linear combination of composites of the above type where the highest  appearance of $\ba$ is either on the right-hand side or on the left-hand side. Using the assumption hypotheses $\textsc{P}_{i,j-1}$ and $\textsc{P}_{i-1,j}$ respectively, the element 
$\R^k (x \cc \pi)$ produces composites of the form 
$$ 
\vcenter{
\xymatrix@M=5pt@R=10pt@C=10pt{
*{} \ar@{--}[rrrr] && & & {h_n}   \\
 &    & & *+[o][F-]{}\ar@{-}[d] \ar@{-}[u]   & \\
&*+[F-:<5pt>]{\Lambda_{i}} \ar@<1ex>@2{-}[uu]\ar@<-1.1ex>@3{-}[uu]   \ar@2{-}[uu]& &*+[F-:<3pt>]{\Psi_{j-1}} \ar@{-}[ur]  \ar@{-}[ul] & \\
&\pi^{\otimes 3} \ar@3{-}[u] &&\pi \ar@{-}[u]& \\
&& *+[o][F-]{x} \ar@{-}[d] \ar@{-}[ur] \ar@3{-}[ul]  & & \\
&& & & 
}}+
\vcenter{
\xymatrix@M=5pt@R=10pt@C=10pt{
*{} \ar@{--}[rrrr] && & & {h_n}   \\
 & *+[o][F-]{}\ar@{-}[d] \ar@2{-}[u]     & & & \\
&*+[F-:<7pt>]{\Lambda_{i-1}} \ar@<0.5ex>@3{-}[ul]  \ar@<-0.5ex>@2{-}[ur]& &*+[F-:<5pt>]{\Psi_j} \ar@3{-}[uu] & \\
&\pi^{\otimes 3} \ar@3{-}[u] &&\pi \ar@{-}[u]& \\
&& *+[o][F-]{x} \ar@{-}[d] \ar@{-}[ur] \ar@3{-}[ul]  & & \\
&& & & ,
}}
$$
{where the dashed line represent the full level-wise homotopy $h_n$. }
Since  the highest part of 
$\Lambda_i$ is made of up an occurence of $\ba$ together with an $h_p$ and since 
 and since the highest part of 
$\Psi_j$ is made of up an occurence of $\ba$ together with an $h_q$, satisfying $p+q=n$, this is equal to 
$$ 
\vcenter{
\xymatrix@M=5pt@R=10pt@C=10pt{
*{h_n \atop h_p} &*{}\ar@<-1ex>@{--}[rr] *{} \ar@<1ex>@{--}[rrrr] && & & *{}    \\
& & *+[o][F-]{}\ar@{-}[d] \ar@2{-}[u]     & & *+[o][F-]{}\ar@{-}[d] \ar@{-}[u]  & \\
&&*+[F-:<7pt>]{\Lambda_{i-1}} \ar@<0.5ex>@3{-}[ul]  \ar@<-0.5ex>@2{-}[ur]& &*+[F-:<2pt>]{\Psi_{j-1}} \ar@{-}[ur]  \ar@{-}[ul] & \\
&&\pi^{\otimes 3} \ar@3{-}[u] &&\pi \ar@{-}[u]& \\
&&& *+[o][F-]{x} \ar@{-}[d] \ar@{-}[ur] \ar@3{-}[ul]  & & \\
&&& & & 
}}-
\vcenter{
\xymatrix@M=5pt@R=10pt@C=10pt{
*{} \ar@<1ex>@{--}[rrrr] && *{}\ar@<-1ex>@{--}[rr]& &*{} &   *{h_n \atop h_q}  \\
 & *+[o][F-]{}\ar@{-}[d] \ar@2{-}[u]     & & *+[o][F-]{}\ar@{-}[d] \ar@{-}[u]  && \\
&*+[F-:<7pt>]{\Lambda_{i-1}} \ar@<0.5ex>@3{-}[ul]  \ar@<-0.5ex>@2{-}[ur]& &*+[F-:<2pt>]{\Psi_{j-1}} \ar@{-}[ur]  \ar@{-}[ul] & &\\
&\pi^{\otimes 3} \ar@3{-}[u] &&\pi \ar@{-}[u]&& \\
&& *+[o][F-]{x} \ar@{-}[d] \ar@{-}[ur] \ar@3{-}[ul]  & & &\\
&& & & &.
}}
$$
Using Relation~$(2)$ of Lemma~\ref{lem:hn}, $(h_p\otimes \id^{\otimes q}- \id^{\otimes p}\otimes h_q)h_n=h_p\otimes h_q$, this is equal to 
$$ 
\vcenter{
\xymatrix@M=5pt@R=10pt@C=10pt{
*{}\ar@{--}[rr]^{h_p}  && *{\quad} \ar@{--}[rr]^{h_q}  & &  *{}   \\
 & *+[o][F-]{}\ar@{-}[d] \ar@2{-}[u]     & & *+[o][F-]{}\ar@{-}[d] \ar@{-}[u]  && \\
&*+[F-:<7pt>]{\Lambda_{i-1}} \ar@<0.5ex>@3{-}[ul]  \ar@<-0.5ex>@2{-}[ur]& &*+[F-:<2pt>]{\Psi_{j-1}} \ar@{-}[ur]  \ar@{-}[ul] & &\\
&\pi^{\otimes 3} \ar@3{-}[u] &&\pi \ar@{-}[u]&& \\
&& *+[o][F-]{x} \ar@{-}[d] \ar@{-}[ur] \ar@3{-}[ul]  & & &\\
&& & & &
}}=
\vcenter{
\xymatrix@M=5pt@R=10pt@C=10pt{
 &&  & &  \\
  &&  & &  \\
&*+[F-:<5pt>]{\Lambda_{i}} \ar@<1ex>@2{-}[u]\ar@<-1.1ex>@3{-}[u]   \ar@2{-}[u]& &*+[F-:<3pt>]{\Psi_{j}}  \ar@3{-}[u] & \\
&\pi^{\otimes 3} \ar@3{-}[u] &&\pi \ar@{-}[u]& \\
&& *+[o][F-]{x} \ar@{-}[d] \ar@{-}[ur] \ar@3{-}[ul]  & & \\
&& & &,  
}}
$$
which concludes the proof of the property $\textsc{P}_{i,j}$. 

In the end, we prove that 
$\sum_{k\ge 0} \R^k (x \cc \pi)= x\cc \pi \cc \Psi$, 
by iterating the property $\textsc{P}_{i,j}$ from right to left above the root.
\end{enumerate}
\end{proof}

In plain words, this proposition shows that the gauge twisted Maurer--Cartan element $\hat{\at}$ represents a new $\P_\infty$-algebra structure on $V$ whose operations have image in the sub-space $i(H)$. Respectively, the gauge twisted Maurer--Cartan element $\check{\alpha}$ represents a new $\P_\infty$-algebra structure on $V$ whose operations apply trivially outside to the sub-space $i(H)$. The next proposition makes this phenomenon even more precise. 

\begin{proposition}
The following assertions are equivalent: 
$$\ba =\pi \ba \quad \iff \quad \Phi=1 \quad \iff \quad \hat{\alpha}=\alpha\ .$$
This implies 
$$\boxed{\hat{\hat{\alpha}}=\hat{\alpha}}\ . $$
Dually, the following assertions are equivalent: 
$$\ba = \ba\pi  \quad \iff \quad \Psi=1 \quad \iff \quad \check{\alpha}=\alpha\ .$$
This implies 
$$\boxed{\check{\check{\alpha}}=\check{\alpha}} \ . $$
We also have 
$$ \boxed{\check{\hat{\alpha}}=\hat{\check{\alpha}}}\ .$$
\end{proposition}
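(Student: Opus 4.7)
My first step is to prove the equivalences in the $\hat\cdot$ statement by the cycle $\bar\alpha = \pi\bar\alpha \Rightarrow \Phi = 1 \Rightarrow \hat\alpha = \alpha$ together with $\Phi = 1 \Rightarrow \bar\alpha = \pi\bar\alpha$. The first two arrows are direct: from $h\pi = 0$ (a consequence of $hi = 0$) we get $h_*\bar\alpha = h\pi\bar\alpha = 0$, hence $\Phi = (1 - h_*\bar\alpha)^{\circledcirc -1} = 1$, and the formula of Proposition~\ref{prop:MChatcheck} then yields $\hat\alpha = \delta + \pi\bar\alpha = \delta + \bar\alpha = \alpha$. Closing the cycle without passing through $\hat\alpha = \alpha$ is the only step that genuinely uses the Maurer--Cartan equation: writing $\id_V - \pi = \partial_V h + h \partial_V$ and post-composing with $\bar\alpha$ gives $(\id - \pi)\bar\alpha = \partial_V(h\bar\alpha) + h(\partial_V \bar\alpha)$, where the first summand vanishes by $h\bar\alpha = 0$; for the second, the MC equation expresses $\partial_V \bar\alpha$ up to pre-composition by $\partial_{V^{\otimes n}}$ (which reinjects a factor of $\bar\alpha$ on the left) essentially as $-\bar\alpha\star\bar\alpha$, and the identity $h(f\star g) = (hf)\star g$ reduces $h(\partial_V\bar\alpha)$ to $(h\bar\alpha)\star\bar\alpha = 0$. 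The dual equivalences for $\check\cdot$ follow the same blueprint with $\pi h = 0$ in place of $h\pi = 0$ and $\Psi, \R$ in place of $\Phi, \LL$.

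The idempotence $\hat{\hat\alpha} = \hat\alpha$ is then a one-line check: $\bar{\hat\alpha} = \pi\bar\alpha\cc\Phi$ satisfies $\pi\bar{\hat\alpha} = \pi^2\bar\alpha\cc\Phi = \pi\bar\alpha\cc\Phi = \bar{\hat\alpha}$, so the equivalences applied to $\hat\alpha$ give $\hat{\hat\alpha} = \hat\alpha$; symmetrically, $\pi\cc\pi = \pi$ yields $\check{\check\alpha} = \check\alpha$.

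For the cross-commutation $\check{\hat\alpha} = \hat{\check\alpha}$, my plan is to compute both sides through Proposition~\ref{prop:MChatcheck} and show that they both equal $\delta + \pi\bar\alpha\cc\Phi\cc\pi$. For the left-hand side, $h_*\bar{\hat\alpha} = h\pi\bar\alpha\cc\Phi = 0$ gives $\Phi_{\hat\alpha} = 1$, whence $\check{\hat\alpha} = \delta + \bar{\hat\alpha}\cc\pi = \delta + \pi\bar\alpha\cc\Phi\cc\pi$. For the right-hand side the kernel $\Phi_{\check\alpha}$ does not simplify to $1$, but the associativity of $\cc$ from Corollary~\ref{cor:AssocUnit} allows the rewriting
\begin{equation*}
\hat{\check\alpha} = \delta + (\pi\bar\alpha\cc\Phi\cc\pi)\cc\Phi_{\check\alpha} = \delta + \pi\bar\alpha\cc\Phi\cc(\pi\cc\Phi_{\check\alpha})\,,
\end{equation*}
and the argument reduces to the universal identity $\pi\cc\Phi_\gamma = \pi$, valid for \emph{any} Maurer--Cartan element $\gamma$: every non-trivial tree in the series $\Phi_\gamma = \sum_{t\in\mathsf{RT}} \frac{1}{|\mathrm{Aut}\, t|}\, t(h\bar\gamma)$ of Proposition~\ref{prop:ccInverse} has its root labeled by $h\bar\gamma$, so its output lies in $\mathrm{Im}\, h$ and is killed by $\pi h = 0$, leaving only the trivial tree contribution $\pi\cc 1 = \pi$. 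Substituting yields $\hat{\check\alpha} = \delta + \pi\bar\alpha\cc\Phi\cc\pi = \check{\hat\alpha}$. The main obstacle, as already noted, is the closure of the equivalence cycle by the Maurer--Cartan equation; once that is in hand, everything else is formal bookkeeping with $\pi^2 = \pi$, the side conditions $h\pi = \pi h = 0$, and the associativity of the circle product.
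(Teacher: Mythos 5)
Most of your steps are sound and close to the paper's own argument, but there is one genuine logical gap: your ``cycle'' has no arrow leaving the node $\hat{\alpha}=\alpha$. You prove $\ba=\pi\ba\Rightarrow\Phi=1$, $\Phi=1\Rightarrow\hat{\alpha}=\alpha$, and $\Phi=1\Rightarrow\ba=\pi\ba$, which establishes $\ba=\pi\ba\Leftrightarrow\Phi=1$ and that both imply $\hat{\alpha}=\alpha$ --- but not the converse $\hat{\alpha}=\alpha\Rightarrow\Phi=1$, so the three-way equivalence claimed in the statement is not proved (and likewise for the dual chain with $\Psi$ and $\check{\alpha}$). The paper closes the cycle in the other direction, and much more cheaply: if $\hat{\alpha}=\alpha$ then Proposition~\ref{prop:MChatcheck} gives $\ba=\pi\,(\ba\cc\Phi)$, whence $\pi\ba=\pi\pi(\ba\cc\Phi)=\pi(\ba\cc\Phi)=\ba$ by $\pi^2=\pi$; dually $\check{\alpha}=\alpha$ gives $\ba\pi=(\ba\cc\Phi\cc\pi)\cc\pi=\ba$. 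This makes your Maurer--Cartan computation of $\Phi=1\Rightarrow\ba=\pi\ba$ superfluous (it is correct, using $h_*(f\star g)=(h_*f)\star g$, but it is the hard direction of an implication that is never needed once the cycle runs through $\hat{\alpha}=\alpha$).

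A secondary warning: your claim that the dual equivalences ``follow the same blueprint'' hides a real asymmetry. Post-composition with $h$ distributes onto the outer factor of a $\star$-product, which is what makes $h(\ba\star\ba)=(h\ba)\star\ba$ work; but the dual step $\Psi=1\Rightarrow\ba=\ba\pi$ would require $(\ba\star\ba)\circ h_n=0$ to follow from $\ba\circ h_m=0$, and pre-composition with the symmetrized homotopies $h_n$ does \emph{not} distribute over $\star$ in this simple way (that interaction is exactly the nontrivial content of Lemma~\ref{lem:hn}). Again, this disappears if you adopt the paper's closing of the cycle. The remaining parts of your proposal --- the idempotence via $\pi\,\bar{\hat{\alpha}}=\bar{\hat{\alpha}}$ and $\bar{\check{\alpha}}\,\pi=\bar{\check{\alpha}}$, and the cross-commutation via $\Phi_{\hat{\alpha}}=1$ (from $h\pi=0$) together with $\pi\cc\Phi_{\check{\alpha}}=\pi$ (from $\pi h=0$, every nontrivial tree in $\Phi_{\check{\alpha}}$ having root labelled by $h\bar{\check{\alpha}}$) --- are correct and coincide with the paper's proof.
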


\begin{proof}
If $\ba =\pi \ba$, then  $\Phi^{-1}=1-h\ba=1-h\pi\ba=1$, by the side conditions. So $\Phi=1$. If $\Phi=1$, then 
$\hat{\alpha}=(\Phi^{-1}\star\alpha)\cc \Phi=\alpha$. 
Finally, if $\hat{\alpha}=\alpha$, then 
$\pi \ba =\pi (\pi \cc \ba \cc \Phi) = \pi \cc \ba \cc \Phi=\ba$  by Proposition~\ref{prop:MChatcheck}.
Using  Proposition~\ref{prop:MChatcheck} again, we have $\pi \bar{\hat{\alpha}}=\bar{\hat{\alpha}}$, so 
$\hat{\hat{\alpha}}=\hat{\alpha}$.

The dual assertions are proved in the same way. 
If $\ba =\ba\pi$, then  $R(1)=0$, by the side conditions. So $\Psi=1$. If $\Psi=1$, then 
$\check{\alpha}=(\Psi\star\alpha)\cc \Psi^{-1}=\alpha$. 
Finally, if $\check{\alpha}=\alpha$, then 
$\ba \pi =(\ba \cc \Phi\cc \pi)\pi = \ba \cc \Phi\cc \pi=\ba$  by Proposition~\ref{prop:MChatcheck}.
Using Proposition~\ref{prop:MChatcheck} again, we have $\bar{\check{\alpha}}\pi=\bar{\check{\alpha}}$, so 
$\check{\check{\alpha}}=\check{\alpha}$.

Notice that the definitions of $\LL$, $\R$, $\Phi$, and $\Psi$ actually depend on the Maurer--Cartan element $\alpha$. To be precise, let us denote them using the index $\alpha$, like $\Phi_\alpha$ for instance. 
By Proposition~\ref{prop:MChatcheck}, we have 
\begin{eqnarray*}
\check{\hat{\alpha}}&=& \delta + \bar{\hat{\alpha}}\cc \Phi_{\hat{\alpha}}\cc \pi = \delta + \pi \cc \ba \cc \Phi\cc \pi \\
&=& 	\delta + \pi \cc (\ba \cc \Phi\cc \pi)\cc \Phi_{\check{\alpha}}	=	\delta + \pi  \cc \bar{\check{\alpha}}\cc \Phi_{\check{\alpha}}	\\
&=&\hat{\check{\alpha}}\ ,
\end{eqnarray*}
since $\pi \circ \Phi_{\check{\alpha}}=\pi$, by the side conditions. 

\end{proof}

This proposition shows that the choice of the gauge elements $\Phi$ and $\Psi$ are quite natural since the 
$\P_\infty$-algebra operations $\alpha$ on $V$ already have image in the sub-space $i(H)$ if and only if $\Phi$ is trivial. In this case, there is no need to perturb the original structure $\alpha$. Dually,  the 
$\P_\infty$-algebra operations $\alpha$ on $V$ are already restricted to the sub-space $i(H)$ if and only if $\Psi$ is trivial. So, there is again no need to perturb the original structure $\alpha$. These two restrictions by perturbation procedures are stable since $\hat{\hat{\alpha}}=\hat{\alpha}$, $\check{\check{\alpha}}=\check{\alpha}$, and $ \check{\hat{\alpha}}=\hat{\check{\alpha}}$.\\

The fact that the output (resp. the inputs) of $\hat{\alpha}$ (resp. $\check{\alpha}$) are restricted to $i(H)$ automatically ensures that $i$ (resp. $p$) becomes an $\infty$-quasi-isomorphism between the transferred structure $p\, \hat{\at}\, i$ and $\hat{\at}$ (resp. 
between $\check{\at}$ and the transferred structure $p\, \check{\at}\, i$). 

\begin{proposition}\label{prop:htt}\leavevmode
\begin{itemize}
\item[$\diamond$] The two elements 
$$\beta := p\, \hat{\at}\, i = p\, \check{\at}\,i $$ 
define the same Maurer-Cartan element in the convolution algebra $\a_{\P, H}$ of the space $H$. 

\item[$\diamond$] The following diagram of $\P_\infty$-algebras with $\infty$-morphisms is commutative: 
\begin{eqnarray*}
&&\xymatrix@C=50pt@R=25pt{
\check{\at} \ar@{~>}[r]^(0.48)p & p\, \check{\at}\, i  \\
\alpha  \ar@{~>}[u]^\Psi& \beta \ar@{=}[u]\\
 \hat{\at } \ar@{~>}[u]^\Phi & \ar@{~>}[l]^(0.52)i p\, \hat{\at}\,i \ar@{=}[u]}\\
&&\xymatrix@C=50pt@R=30pt{\text{in}\ V & \text{in}\ H}
\end{eqnarray*}
\end{itemize}
\end{proposition}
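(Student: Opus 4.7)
My plan is to deduce both parts of the proposition from the explicit formulas of Proposition~\ref{prop:MChatcheck} for $\hat\alpha$ and $\check\alpha$, together with the contraction side conditions, in particular $p\pi=p$ (since $\pi=ip$ and $pi=\id_H$) and $\pi i=i$.

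For the equality $p\,\hat\alpha\,i=p\,\check\alpha\,i$, I would evaluate both sides on an arbitrary element $x\in\P^{\ac}$. Using $\bar{\hat\alpha}=\pi\cc\bar\alpha\cc\Phi$, the co-operadic decomposition $\Delta(x)=\sum x^{(0)}\otimes(x^{(1)}\otimes\cdots\otimes x^{(k)})$ yields
\[
p\,\bar{\hat\alpha}(x)\,i^{\otimes n} = p\pi\,\bar\alpha(x^{(0)})\bigl(\Phi(x^{(1)})\,i^{\otimes n_1}\otimes\cdots\otimes\Phi(x^{(k)})\,i^{\otimes n_k}\bigr),
\]
which collapses via $p\pi=p$. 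Using $\bar{\check\alpha}=\bar\alpha\cc\Phi\cc\pi$ on the other side gives the same formula after substituting $\pi^{\otimes n_j}i^{\otimes n_j}=i^{\otimes n_j}$. Both expressions therefore define the same element, which I then call $\beta$.

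For the second part, the key observation is that $\hat\alpha$ has output restricted to $i(H)$ (it starts with $\pi=ip$ on the left), while $\check\alpha$ has inputs restricted to $i(H)$ (it ends with $\pi$ pre-applied to each input). Regarding $i$ and $p$ as elements of the appropriate mixed-space convolution algebra that are concentrated in arity one (i.e.\ $i(\I)=i$, $p(\I)=p$, zero on $\overline{\P}^{\ac}$), the products $i\star\beta$ and $\hat\alpha\cc i$ collapse to ordinary pre- and post-composition, so the $\infty$-morphism relation $i\star\beta=\hat\alpha\cc i$ amounts to $i\,\beta(x)=\hat\alpha(x)\,i^{\otimes n}$; this follows from the definition of $\beta$ together with the identity $\pi\hat\alpha(x)=\hat\alpha(x)$ coming from the output restriction. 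A dual computation, exploiting $\check\alpha(x)\,\pi^{\otimes n}=\check\alpha(x)$ and $ip=\pi$, establishes $p\star\check\alpha=\beta\cc p$. The Maurer--Cartan property of $\beta$ is then read off from either of these relations combined with $\mathrm{MC}$ for $\hat\alpha$ (respectively $\check\alpha$) and the injectivity of $i$ (respectively surjectivity of $p$).

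The diagram commutativity is then immediate: the two presentations of $\beta$ coincide, $\Phi$ and $\Psi$ are the $\infty$-isotopies provided by Theorem~\ref{thm:DeligneGroupoidII}, and $i$, $p$ are the $\infty$-morphisms just constructed. The main bookkeeping obstacle is making the convolution algebra formalism work cleanly when the source and target of an $\infty$-morphism live on different chain complexes; the essential simplification which makes this manageable is that $i$ and $p$ have all higher components equal to zero, so the only surviving term in $i\star\beta$ comes from plugging $\beta$ into the root-hole of $\I$, and the only surviving term in $\hat\alpha\cc i$ comes from the trivial leaf decomposition with every leaf labelled by $\I$.
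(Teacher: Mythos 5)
Your treatment of the first bullet and of the two horizontal arrows is sound and follows the same route as the paper: both rest on Proposition~\ref{prop:MChatcheck} (namely $\hat{\alpha}=\delta+\pi\cc\bar\alpha\cc\Phi$ and $\check{\alpha}=\delta+\bar\alpha\cc\Phi\cc\pi$) together with the side conditions $p\pi=p$ and $\pi i=i$, and your observation that $i\star\beta=\hat{\alpha}\cc i$ and $p\star\check{\alpha}=\beta\cc p$ reduce to the output/input restrictions of $\hat\alpha$ and $\check\alpha$ is exactly the paper's argument, just unwound at the level of elements of $\P^{\ac}$.

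However, there is a genuine gap in your last step. Asserting that ``the diagram commutativity is then immediate'' because all arrows are $\infty$-morphisms and the two presentations of $\beta$ coincide does not establish commutativity: the square has two distinct paths from $p\,\hat\alpha\,i$ to $p\,\check\alpha\,i$, namely the identity (through the two equalities and $\beta$) and the composite $p\cc\Psi\cc\Phi\cc i$, and one must verify that the latter equals $\id_H$. This is the most substantive point of the second bullet. The paper proves it via the identity $\Psi\cc\Phi=\Psi+\Phi-1$, which holds because every nontrivial term of $\Phi$ carries an $h$ at its root and every nontrivial term of $\Psi$ carries a level-wise homotopy $h_n$ on its inputs, so all cross terms in $\Psi\cc\Phi$ die by $h^2=0$; one then concludes $p\cc(\Psi+\Phi-1)\cc i=pi=\id_H$ using $ph=0$ and $hi=0$. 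Without some version of this computation your proof of the second bullet is incomplete.
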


\begin{proof}\leavevmode
\begin{itemize}
\item[$\diamond$] Proposition~\ref{prop:MChatcheck} and the side conditions ensure that 
$$ p\, \hat{\at}\, i  = p(\delta  + \pi\,(\bar\alpha \cc \Phi))i= \delta + p(\bar\alpha \cc \Phi)i
= p(\delta + (\bar\alpha \cc \Phi)\cc \pi)i=p\, \check{\at}\, i  \ . $$

\item[$\diamond$] By Theorem~\ref{thm:DeligneGroupoidII}, the elements $\Phi$ and $\Psi$ are $\infty$-isotopies. 
The map $i$ defines an $\infty$-quasi-isomorphism between $p\, \hat{\at}\,i $ and $\hat{\at}$ since 
$$ i \star (p\, \hat{\at}\,i )  = \hat{\at} \cc i 
$$
by the side conditions and Proposition~\ref{prop:MChatcheck}.
In the same way, the map $p$ defines an $\infty$-quasi-isomorphism between $\check{\at} $ and $p\, \check{\at}\,i$ since 
$$  (p\, \check{\at}\,i )\cc p  = p\star \check{\at} 
$$
by the side conditions and Proposition~\ref{prop:MChatcheck}.

The side conditions, more precisely $h^2=0$, and the definitions of $\Phi=\sum_{t\in \mathsf{RT}} 
\frac{1}{|\mathrm{Aut}\, t|}\, t(h\ba) $ and $\Psi=1 - h^*\ba + h^*(h^*\ba \star \ba) - h^*(h^*(h^*\ba \star \ba) \star \ba) +\cdots  $ ensure that 
$\Psi \cc \Phi= \Psi + \Phi -1$. Therefore, the composite $p\cc \Psi\cc\Phi\cc i$ is equal to 
$$p\cc \Psi\cc\Phi\cc i =  p\cc(\Psi + \Phi -1)\cc i=pi=\id_H\, $$
again by the side conditions ($hi=0$ and $ph=0$). 

\end{itemize}
\end{proof}

\begin{theorem}
The above formulas define the following ingredients which solve  the Homotopy Transfer Theorem: 
\begin{itemize}
\item[$\diamond$] the transferred structure 
$$\boxed{\beta=\delta  + p\,(\bar\alpha \cc \Phi)\, i=
\sum_{t\in \mathsf{rRT}} 
\frac{1}{|\mathrm{Aut}\, t|}\ p\, t(\ba; h)\, i} \ , $$

\item[$\diamond$] the $\infty$-quasi-isomorphism 
$$\boxed{i_\infty= \Phi\,  i= \sum_{t\in \mathsf{rRT}} 
\frac{1}{|\mathrm{Aut}\, t|}\ h\, t(\ba; h)\, i} \ , $$

\item[$\diamond$] and the $\infty$-quasi-isomorphism 
$$\boxed{p_\infty= p\, \Psi}\ .$$

\end{itemize}
They satisfy  
\begin{eqnarray*}
 p_\infty \cc i_\infty=\id_H \ .
\end{eqnarray*}
\end{theorem}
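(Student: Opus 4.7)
The plan is to build directly on Proposition~\ref{prop:htt}, which has already done most of the work: it produces the transferred Maurer--Cartan element $\beta = p\,\hat{\at}\,i = p\,\check{\at}\,i$ on $H$ and packages it in the commutative diagram whose vertical arrows are the $\infty$-isotopies $\Phi$ and $\Psi$ (via Theorem~\ref{thm:DeligneGroupoidII}) and whose horizontal arrows are the $\infty$-quasi-isomorphisms $i$ and $p$. So nothing is really new at the level of existence; the task is to identify the formulas.

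First, I would define $i_\infty := \Phi \cc i$ and $p_\infty := p \cc \Psi$ as composites of $\infty$-morphisms read off the two faces of that diagram. More precisely, $i_\infty$ is the composite of $i : (H,\beta) \rightsquigarrow (V,\hat{\at})$ with the $\infty$-isotopy $\Phi : (V,\hat{\at}) \rightsquigarrow (V,\at)$, and dually $p_\infty$ composes $\Psi : (V,\at) \rightsquigarrow (V,\check{\at})$ with $p : (V,\check{\at}) \rightsquigarrow (H,\beta)$. Since composition of $\infty$-morphisms is given by $\cc$, both are genuine $\infty$-quasi-isomorphisms.

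Next, I would unpack the closed-form expression $\Phi = \sum_{t\in\mathsf{RT}} \frac{1}{|\mathrm{Aut}\, t|}\, t(h\ba)$ from Proposition~\ref{prop:ccInverse} and then expand $\ba \cc \Phi$ using the operadic description of $\cc$ given in Lemma~\ref{lem:circledcirc}. Concretely, the circle product grafts copies of $\Phi$ above the operation $\ba$, and since each such copy is itself a sum over rooted trees with vertices labelled by $h\ba$, the resulting expression is a sum over reduced rooted trees $t \in \mathsf{rRT}$ with root vertex labelled by $\ba$ and all other vertices labelled by $h\ba$. A bookkeeping argument with automorphism groups (identical to the one in the proof of Proposition~\ref{prop:ccInverse}) collapses the coefficients into $\frac{1}{|\mathrm{Aut}\, t|}$. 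Pre-composing with $i$ and post-composing with $p$ via Proposition~\ref{prop:MChatcheck} produces the announced tree formula for $\beta$, and the same computation applied directly to $\Phi \cc i$ (where the output is routed through $h$ instead of $p$) produces the formula for $i_\infty$. For $p_\infty = p\cc\Psi$ the explicit recursive series $\Psi = 1 + \R(1) + \R^2(1) + \cdots$ already provides an expression, so there is nothing further to expand.

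Finally, the identity $p_\infty \cc i_\infty = \id_H$ is the computation already carried out at the end of the proof of Proposition~\ref{prop:htt}. From the explicit series defining $\Phi$ and $\Psi$, the side condition $h^2 = 0$ forces all tree compositions appearing in $\Psi \cc \Phi$ beyond the linear terms to vanish, giving $\Psi \cc \Phi = \Psi + \Phi - 1$. Pre-composing with $p$ and post-composing with $i$, the side conditions $ph=0$ and $hi=0$ annihilate both $\Psi$ and $\Phi$ contributions beyond the constant term, leaving $p\,i = \id_H$. The main obstacle is the combinatorial matching in the previous paragraph: verifying that the expansion of $\ba \cc \Phi$ produces exactly one copy of each reduced rooted tree with the correct coefficient $\frac{1}{|\mathrm{Aut}\, t|}$, rather than an overcounted or undercounted version. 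Everything else is a direct assembly of Propositions~\ref{prop:MChatcheck},~\ref{prop:ccInverse}, and~\ref{prop:htt}.
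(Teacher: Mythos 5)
Your proposal is correct and follows essentially the same route as the paper, whose entire proof is the one-line observation that the theorem is a direct corollary of Proposition~\ref{prop:htt}; your identification of $i_\infty=\Phi\cc i$ and $p_\infty=p\cc\Psi$ as the two composites read off the commutative diagram, and of $p_\infty\cc i_\infty=\id_H$ with the computation $p\cc(\Psi+\Phi-1)\cc i=pi$ already done there, matches the paper exactly. The extra bookkeeping you supply for the tree expansion $\bar\alpha\cc\Phi=\sum_{t\in\mathsf{rRT}}\frac{1}{|\mathrm{Aut}\,t|}\,t(\ba;h)$ is also consistent with the paper, where this identity is established (implicitly, by the same automorphism-group factorization as in Proposition~\ref{prop:ccInverse}) inside the proof of Lemma~\ref{lem:TechR}.
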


\begin{proof}
This is a direct corollary of  Proposition~\ref{prop:htt}. \end{proof}

\begin{remarks} \leavevmode
\begin{itemize}
\item[$\diamond$] These formulas for $\beta$, $i_\infty$, and $p_\infty$ are the same as the ones given explicitly in \cite[Chapter~$10$]{LodayVallette12}. They also coincide with what can be extracted from the recursive formulas of  \cite{Berglund14}.

\item[$\diamond$] The next step would be to define a degree $1$ element $h_\infty\in \a$, which extends the homotopy $h$. Following the same ideas, one can consider $h_\infty:= h\, \Psi$, which is easily seen to satisfy the side conditions $h_\infty \cc h_\infty=0$, $p_\infty \cc h_\infty=0$, and $h_\infty\cc i_\infty=0$. But the homotopy relation between $h_\infty$, $i_\infty\cc p_\infty$, and $\id_V$ is more subtle and surely deserves  more study. 

\end{itemize}
\end{remarks}

This approach conceptually explains the discrepancy between these four formulae: the first two ones are of the same shape (trees) since they rely on the $\Phi$-kernel and the other two ones
are of the same shape (leveled trees) since they relie on the the $\Psi$-kernel. Such a phenomenon was first noticed by Markl \cite{Markl06} on the level of $A_\infty$-algebras. 
This discrepancy is present in the formulas of \cite[Chapter~$10$]{LodayVallette12} and  \cite{Berglund14}. 
See also \cite[Section~$4$]{LadaMarkl05}, where the role of the symmetric braces in the homotopy transfer theorem is guessed.

\appendix 

\section{Combinatorics of graphs }\label{App:Comb}

The purpose of this appendix is to prove a new combinatorial formula for rooted trees. This property plays a crucial role  in the proof of the main theorem of this paper, see  Section~\ref{subsec:HTT}.\\

\begin{definition}[Weight of a levelization]
To any levelization $\lambda$ of a rooted tree  $t\in \mathsf{RT}$, we associated its \textit{weight}
$\omega(\lambda)$ defined by the product over the levels between two vertices of one over the number of internal edges crossing this level. 
\end{definition}

\begin{example}
For instance, the weight of the following levelization 
$$\lambda=\vcenter{
\xymatrix@M=5pt@R=10pt@C=10pt{
*{}\ar@{..}[rr] & & *+[o][F-]{}\ar@{-}[dd]  & \ar@{}[d]^{1}_{\leftarrow}\\
*+[o][F-]{}\ar@{-}[ddr]\ar@{..}[rr] &&  *{} &\ar@{}[d]^{\frac12}_{\leftarrow} \\
*{}\ar@{..}[rr] &&  *+[o][F-]{}\ar@{-}[dl]&\ar@{}[d]^{\frac12}_{\leftarrow}\\
*{}\ar@{..}[r]& *+[o][F-]{} & *{}\ar@{..}[l] &
}}$$
is equal to 
$$\omega(\lambda)=1\times\frac12\times \frac12=\frac14\ .$$
\end{example}

\begin{proposition}\label{prop:CombTrees}
For any rooted tree $t\in \mathsf{RT}$, the following relation holds 
$$\boxed{ 
\sum_{\scriptstyle \text{levelization} \atop \scriptstyle \lambda\ \text{of} \ t} \omega(\lambda)=\frac{1}{|\mathrm{Aut}\, t|}}
\ .$$
\end{proposition}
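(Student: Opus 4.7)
My plan is to realise the sum $\sum_\lambda \omega(\lambda)$ as the total probability of an explicit random process for generating linear extensions of the tree $t$. First, I pass to labelled levelizations (where the vertices of $t$ are distinguished). The group $\mathrm{Aut}(t)$ acts freely on labelled levelizations, since any automorphism preserving such a $\lambda$ must fix each singleton level, and the weight $\omega$ is clearly $\mathrm{Aut}(t)$-invariant. The proposition thus reduces to showing
\begin{equation*}
\sum_{\lambda \text{ labelled}} \omega(\lambda) = 1,
\end{equation*}
after which quotienting by $|\mathrm{Aut}(t)|$ yields the stated formula.

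Set $n := \nu_t$. For a labelled levelization $\lambda : V(t) \to \{1,\ldots,n\}$ (with children strictly above parents, so the root sits at level $n$), define its reverse $\mu := n+1-\lambda$, which is a linear extension of $t$ viewed as a poset with root $r$ as its minimum. Writing $V^s(\mu) := \{v : \mu(v) \leq s\}$ and
\begin{equation*}
A_s(\mu) := \{v \notin V^{s-1}(\mu) : p(v) \in V^{s-1}(\mu)\} \qquad (s \geq 2)
\end{equation*}
for the set of vertices available at step $s$ of a top-down ``grow from the root'' process, the key identity is
\begin{equation*}
r_j(\lambda) = |A_{n-j+1}(\mu)|, \qquad 1 \leq j \leq n-1.
\end{equation*}
This is because $\{v : \lambda(v) \leq j\}$ is precisely the complement of $V^{n-j}(\mu)$, so the edges crossing between $\lambda$-levels $j$ and $j+1$ are exactly the tree edges joining a vertex outside $V^{n-j}(\mu)$ to its parent inside, and these are in bijection with $A_{n-j+1}(\mu)$. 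Substituting into the definition of $\omega$ gives
\begin{equation*}
\omega(\lambda) = \prod_{s=2}^{n} |A_s(\mu)|^{-1}.
\end{equation*}

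To conclude, consider the random process that starts from $V^1 = \{r\}$ and, at each subsequent step $s$, adds a vertex sampled uniformly from $A_s$. Each realisation produces a labelled linear extension $\mu$ with probability exactly $\prod_{s=2}^n |A_s(\mu)|^{-1} = \omega(\lambda)$; since the total probability sums to $1$, the labelled statement follows, and the proposition is then obtained by dividing by $|\mathrm{Aut}(t)|$. The main obstacle is the identity $r_j(\lambda) = |A_{n-j+1}(\mu)|$, which expresses the duality between the top-down description of the shrinking subforest $\{v : \lambda(v) \leq j\}$ and the bottom-up description of the growing prefix $V^s(\mu)$; once it is in hand, the rest is a formal probability-theoretic assembly.
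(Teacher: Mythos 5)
Your proof is correct, and it takes a genuinely different route from the one in the paper. The paper proves the identity by induction on the number of vertices, generalising the statement to forests: it removes the lowest vertex of a levelization and tracks how the automorphism group of the resulting forest changes, which requires the multinomial bookkeeping carried by the coefficients $\xi_j$. You instead argue directly and without induction: the passage to labelled levelizations (equivalently, linear extensions of the tree poset) is legitimate because the $\mathrm{Aut}\, t$-action on them is free and weight-preserving, and the identity $r_j(\lambda)=|A_{n-j+1}(\mu)|$ --- which you verify correctly --- converts $\omega(\lambda)$ into the probability that the uniform ``grow from the root'' process produces the linear extension $\mu$, so the labelled sum is a total probability and equals $1$. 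Your approach is shorter and makes the appearance of $1/|\mathrm{Aut}\, t|$ transparent (it is exactly the size of each orbit of the free action), whereas the paper's induction has the side benefit of establishing the forest version of the statement; your argument would also yield that version after adjoining a virtual common root. The only cosmetic issues are that $r_j(\lambda)$ is used before being defined (it is the number of edges crossing the gap between levels $j$ and $j+1$, whose reciprocals multiply to give $\omega(\lambda)$), and that one should remark that the growth process never stalls, i.e. $A_s\neq\emptyset$ for all $s\le n$, which follows from the connectedness of $t$.
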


\begin{proof}
Let us more generally prove this fact for a forest of trees. The notion of a levelization of a forest is the same as the one for  trees; the weight of a forest is defined similarly except that we multiply by the extra term equal to one over the number of trees of the forest:
$$\omega\left(\vcenter{
\xymatrix@M=5pt@R=10pt@C=10pt{
*+[o][F-]{}\ar@{-}[dddr] \ar@{..}[rrr] && & *{}  & \ar@{}[d]^{1}_{\leftarrow}\\
*{}\ar@{..}[rrr] && &*+[o][F-]{}\ar@{-}[ddd]  *{} &\ar@{}[d]^{\frac12}_{\leftarrow} \\
*{}\ar@{..}[rr] && *+[o][F-]{}\ar@{-}[dl]\ar@{..}[r]& *{} &\ar@{}[d]^{\frac13}_{\leftarrow}\\
*{}\ar@{..}[r]&*+[o][F-]{}\ar@{-}[d]*{}\ar@{..}[rr] & & *{} &\ar@{}[d]^{\frac12}_{\leftarrow}\\\
&&&&
}}\right)=1\times\frac12\times \frac13\times \frac12=\frac{1}{12}\ . $$
If a forest is reduced to a single tree, one recovers the above definition. 

Let us prove this result by induction on the number of vertices of the forest. When the forest $f$ has only one vertex, it is made up of one tree made up of  the sole  root vertex. In this case, the left-hand side and the  right-hand side of the formula are both equal to $1$. Suppose now the result true for forests with $n-1$ vertices. Let us prove it for any forest $f$ with $n$ vertices.

Let $f$ be a forest made up of $i_j$ trees of type $t_j$, for $j=1,\ldots, l$ and let $k=i_1+\cdots+i_l$. Given a levelization $\lambda$ of the forest $f$, the lowest vertex belongs to a tree of type $t_j$. If we cut such a levelization just above this vertex, we get a levelization $\lambda_j$ of an induced forest denoted $f_j$. In this case, one has to cut the tree $t_j$ just above its root, this gives rise to a forest made up of $c_{j,m}$ trees of type $t_m$, for $m\neq j$, and possibly trees of new types. We denote by $n_j$ the forest made up of these latter trees.  We consider 
$$\xi_j:=\prod_{m\neq j}\binom{i_{m}+c_{j,m}}{c_{j,m}}\ . $$
Under these notations, the number of automorphisms of the tree $t_j$ is equal to 
\begin{eqnarray*}
|\mathrm{Aut}\, t_j|=|\mathrm{Aut}\, n_j|
\prod_{m\neq j} \left(|\mathrm{Aut}\, t_m|^{c_{j,m}}\, c_{j,m}!\right)
\end{eqnarray*}
and the number of automorphisms
of the forest $f$ is equal to 
\begin{eqnarray*}
|\mathrm{Aut}\, f|=\prod_{j=1}^l\left( |\mathrm{Aut}\, t_j|^{i_j}\, i_j!\right) \ .
\end{eqnarray*}
Therefore,  the number of automorphisms of the forest $f_j$ is equal to 
\begin{eqnarray*}
|\mathrm{Aut}\, f_j|&=&|\mathrm{Aut}\, t_j|^{i_j-1}(i_j-1)!|\mathrm{Aut}\, n_j|
\prod_{m\neq j} \left(|\mathrm{Aut}\, t_m|^{i_m+c_{j,m}}(i_m+c_{j,m})!\right)\\
&=&|\mathrm{Aut}\, t| \frac{1}{|\mathrm{Aut}\, t_j|i_j}|\mathrm{Aut}\, n_j|
\prod_{m\neq j} \left(|\mathrm{Aut}\, t_m|^{c_{j,m}}\frac{(i_m+c_{j,m})!}{i_m!}\right)\\
&=&|\mathrm{Aut}\, t| \frac{1}{i_j}
\prod_{m\neq j} \left(\frac{(i_m+c_{j,m})!}{i_m!c_{j,m}!}\right)=|\mathrm{Aut}\, t| \frac{\xi_j}{i_j}
\ .
\end{eqnarray*}
The levelization $\lambda$ of the forest $f$ gives rise to the levelization $\lambda_j$ of the induced forest $f_j$. In the other way round, there are exactly $\xi_j$ ways to produce the levelization $\lambda$ of  $f$ from the levelization $\lambda_j$ of $f_j$: one has first to choose  $c_{j,m}$ trees of type $t_m$ from the   $i_{m}+c_{j,m}$ such trees inside $f_j$ and then to attach them, for any $m\neq j$, and all the trees of the forest $n_j$ onto a new root vertex, which is put on the new ground level of the levelization. 
In the end, the fact that  $\omega(\lambda)=\frac1k\omega(\lambda_j)$ and the induction hypothesis give
\begin{eqnarray*}
\sum_{\scriptstyle \text{levelization} \atop \scriptstyle \lambda\ \text{of} \ f} \omega(\lambda)&=&
\sum_{j=1}^l 
\sum_{\scriptstyle \text{levelization} \atop \scriptstyle \lambda_j\ \text{of} \ f_j} \xi_j \, \frac1k  \, \omega(\lambda_j)
=\frac1k\sum_{j=1}^l  
\frac{\xi_j}{|\mathrm{Aut}\, f_j|} =\frac1k\sum_{j=1}^l    \frac{i_j}{|\mathrm{Aut}\, f|}   \\&=&
\frac{1}{|\mathrm{Aut}\, f|} \ .
\end{eqnarray*}
\end{proof}

\bibliographystyle{amsalpha}

\end{document}